\newcounter{thmcounter}
\numberwithin{equation}{section}
\numberwithin{thmcounter}{section}
\newtheorem{theorem}[thmcounter]{Theorem}
\newtheorem{proposition}[thmcounter]{Proposition}
\newtheorem{lemma}[thmcounter]{Lemma}
\newtheorem{question}[thmcounter]{Question}
\newtheorem{conjecture}[thmcounter]{Conjecture}
\newtheorem{problem}[thmcounter]{Problem}
\theoremstyle{definition}
\newtheorem{example}[thmcounter]{Example}
\theoremstyle{remark}
\newtheorem{remark}[thmcounter]{Remark}
\newtheoremstyle{claim}{9pt}{3pt}{}{\parindent}{\bf}{.}{1em}{}
\theoremstyle{claim}
\newenvironment{namelist}[1]{%
\begin{list}{}
{
\settowidth{\labelwidth}{#1}%
\setlength{\labelsep}{0.3em}%
\setlength{\leftmargin}{\labelwidth}%
\addtolength{\leftmargin}{\labelsep}}}{%
\end{list}}
\newcommand{\nP}{\mathbf{P}}                     
\newcommand{\sF}{\mathscr{F}}
\newcommand{\sO}{\mathscr{O}}                    
\DeclareMathOperator{\Char}{char}                
\DeclareMathOperator{\coker}{coker}              
\DeclareMathOperator{\ev}{ev}					 
\DeclareMathOperator{\id}{id}                    
\DeclareMathOperator{\Supp}{Supp}                
\DeclareMathOperator{\rank}{rank}                
\newcounter{rkcounter}             
\begin{document}

\title[Asymptotic nonvanishing of syzygies of algebraic varieties]{Asymptotic nonvanishing of syzygies \\of algebraic varieties}

\author{Jinhyung Park}
\address{Department of Mathematical Sciences, KAIST, 291 Daehak-ro, Yuseong-gu, Daejeon 34141, Republic of Korea}
\email{parkjh13@kaist.ac.kr}

\date{\today}
\subjclass{14C20, 14J60, 13D02}
\keywords{asymptotic syzygies, Koszul cohomology, algebraic varieties, line bundles}
 
\thanks{J. Park was partially supported by the National Research Foundation (NRF) funded by the Korea government (MSIT) (NRF-2019R1A6A1A10073887 and NRF-2022M3C1C8094326).}

\begin{abstract} 
We establish precise nonvanishing results for asymptotic syzygies of smooth projective varieties. This refines Ein--Lazarsfeld's asymptotic nonvanishing theorem. Combining with the author's previous asymptotic vanishing result, we completely determine the asymptotic shapes of the minimal free resolutions of the graded section modules of a line bundle on a smooth projective variety as the positivity of the embedding line bundle grows. 
\end{abstract}

\maketitle


\section{Introduction}
The purpose of this paper is to address the main theme of \cite{EL2} and \cite{Park}:  \emph{the asymptotic behavior of syzygies of algebraic varieties is surprisingly uniform}. After the pioneering work of Green \cite{Green}, there has been a considerable amount of research to understand syzygies of algebraic varieties. It is an interesting problem to describe the overall asymptotic behavior of syzygies of graded section modules of a line bundle on a smooth projective variety as the positivity of the embedding line bundle grows (see \cite[Problem 5.13]{Green} and \cite[Problem 4.4]{EL1}). The influential paper \cite{EL2} of Ein--Lazarsfeld opens the door to asymptotic syzygies of algebraic varieties, and the asymptotic nonvanishing theorem was proved there. In the present paper, we provides a new approach to nonvanishing of asymptotic syzygies, and together with the author's asymptotic vanishing theorem \cite{Park}, we exhibit the uniform behavior of all asymptotic syzygies.

\medskip

Throughout the paper, we work over an algebraically closed field $\mathbf{k}$ of arbitrary characteristic. Let $X$ be a smooth projective variety of dimension $n$, and $B$ be a line bundle on $X$. For an integer $d \geq 1$, set
$$
L_d:=\sO_X(dA+P)~~\text{ and }~~r_d:=h^0(X, L_d)-1,
$$
where $A$ is an ample divisor and $P$ is an arbitrary divisor on $X$. We assume that $d$ is sufficiently large so that $L_d$ is very ample and $r_d = \Theta(d^n)$. Here, for a nonnegative function $f(d)$ defined for positive integers $d$, we define:
$$
\begin{array}{rcl}
f(d) \geq \Theta(d^k) & \Longleftrightarrow & \begin{array}{l} \text{there is a constant $C_1 > 0$ such that} \\ \text{$f(d) \geq C_1 d^k$ for any sufficiently large positive integer $d$};\end{array}\\[12pt]
f(d) \leq \Theta(d^k) & \Longleftrightarrow & \begin{array}{l} \text{there is a constant $C_2>0$ such that} \\ \text{$f(d) \leq C_2 d^k$ for any sufficiently large positive integer $d$};\end{array}\\[12pt]
f(d) = \Theta(d^k) & \Longleftrightarrow & \begin{array}{l} \text{there are constants $C_1, C_2 > 0$ such that 
} \\ \text{$C_1 d^k \leq f(d) \leq C_2 d^k$ for any sufficiently large positive integer $d$.}\end{array}
\end{array}
$$
For simplicity, we write $f(d) = \Theta(1)$ if $f(d)$ is a constant including $0$ for any sufficiently large positive integer $d$.
Let $S_d:=\bigoplus_{m \geq 0} S^mH^0(X, L_d)$. By Hilbert syzygy theorem, the finitely generated graded section $S_d$-module
$$
R_d=R(X, B; L_d):=\bigoplus_{m \in \mathbf{Z}} H^0(X, B \otimes L_d^m)
$$
admits a minimal free resolution
$$
 \xymatrix{
 0 & R_d \ar[l]& E_0 \ar[l]  & E_1  \ar[l] & \ar[l] \cdots & \ar[l] E_r \ar[l] & \ar[l] 0, 
 }
$$
where
$$
E_p = \bigoplus_{q} K_{p,q}(X, B; L_d)  \otimes_{\mathbf{k}} S_d(-p-q).
$$
Here the \emph{Koszul cohomology group} $K_{p,q}(X, B; L_d)$ can be regarded as the space of $p$-th syzygies of weight $q$.  When $X \subseteq \nP H^0(X, L_d) = \nP^{r_d}$ is projectively normal, the minimal free resolution of $R_d$ with $B=\sO_X$ contains all information about the defining equations of $X$ in $\nP^{r_d}$ and their syzygies. Based on the experience of the case of curves, it was widely believed that the minimal free resolutions of $R_d$ become simpler as $d$ increases. However, Ein--Lazarsfeld \cite{EL2} showed that this had been misleading, and they instead proposed that there would be a uniform asymptotic vanishing and nonvanishing behavior of $K_{p,q}(X, B; L_d)$ when $d$ is sufficiently large.

\medskip

It is elementary to see that
$$
K_{p,q}(X, B; L_d) = 0~~\text{ for $q \geq n+2$}.
$$
The cases $q=0$ and $q=n+1$ are well understood due to Green, Schreyer, Ottaviani--Paoletti, and Ein--Lazarsfeld: \cite[Proposition 5.1 and Corollary 5.2]{EL2} state that
$$
\begin{array}{rcl}
K_{p,0}(X, B; L_d) \neq 0 & \Longleftrightarrow & 0 \leq p \leq h^0(B) -1;\\[3pt]
K_{p,n+1}(X, B; L_d) \neq 0 & \Longleftrightarrow & r_d - n - h^0(X, \omega_X \otimes B^{-1}) + 1 \leq p \leq r_d - n.
\end{array}
$$
For $1 \leq q \leq n$, let $c_q(d)$ be the number such that
$$
K_{c_q(d), q}(X, B; L_d) \neq 0~~\text{ and }~~K_{p, q}(X, B; L_d) = 0~~\text{ for $0 \leq p \leq c_q(d)-1$},
$$
and $c_q'(d)$ be the number such that
$$
K_{r_d-c_q'(d), q}(X, B; L_d) \neq 0~~\text{ and }~~K_{p,q}(X, B; L_d) = 0~~\text{ for $r_d-c_q'(d)+1 \leq p \leq r_d$}.
$$
After interesting nonvanishing results of Ottaviani--Paoletti \cite{OP} and Eisenbud--Green--Hulek--Popescu \cite{EGHP}, Ein--Lazarsfeld proved the asymptotic nonvanishing theorem (\cite[Theorem 4.1]{EL2}): For each $1 \leq q \leq n$, if $d$ is sufficiently large, then
$$
K_{p,q}(X, B; L_d) \neq 0~~\text{ for $\Theta(d^{q-1}) \leq p \leq r_d - \Theta(d^{n-1})$}.
$$
In particular, $c_q(d) \geq \Theta(d^{q-1})$ and $c_q'(d) \leq \Theta(d^{n-1})$. In  \cite[Conjecture 7.1]{EL2}, Ein--Lazarsfeld conjectured that 
$$
K_{p,q}(X, B; L_d) = 0~~\text{ for $0 \leq p \leq \Theta(d^{q-1})$},
$$
and this was confirmed by the present author \cite[Theorem 1.1]{Park} using Raicu's result in the appendix of \cite{Raicu}. In particular, $c_q(d) = \Theta(d^{q-1})$.

\medskip

Despite the aforementioned results on asymptotic syzygies of algebraic varieties, at least two problems remain. First, it is unclear whether vanishing and nonvanishing of $K_{p,q}(X, B; L_d)$ can alternate in a few steps after $c_q(d)$ or before $r_d-c_q'(d)$. Second, the previous results do not say anything about  $K_{p,q}(X, B; L_d)$ for $r_d - \Theta(d^{n-1}) \leq p \leq r_d$. In this paper, we completely resolve these two issues: We show that vanishing and nonvanishing of $K_{p,q}(X, B; L_d)$ can alternate only at $c_q(d)$ and $r_d-c_q'(d)$, and we give estimations for $c_q(d)$ and $c_q'(d)$. Consequently, we could determine the precise vanishing and nonvanishing range of $p$ for $K_{p,q}(X, B; L_d)$. It is worth noting that $c_q'(d)$ heavily depends on $H^{q-1}(X, B)$ while $c_q(d)$ depends only on $d$ asymptotically. 

\begin{theorem}\label{thm:main}
Let $X$ be a smooth projective variety of dimension $n \geq 1$, and $B$ be a line bundle on $X$. For an integer $d \geq 1$, set
$$
L_d:=\sO_X(dA+P)~~\text{ and }~~r_d:=h^0(X, L_d)-1,
$$
where $A$ is an ample divisor and $P$ is an arbitrary divisor on $X$. Fix an index $1 \leq q \leq n$. Then there exist functions $c_q(d)$ and $c_q'(d)$ with
\begin{equation}\label{eq:main1}
c_q(d) = \Theta(d^{q-1})~~\text{ and }~~c_q'(d) = \begin{cases} \Theta(d^{n-q}) & \text{if $H^{q-1}(X, B) =0$ or $q=1$} \\  q-1 & \text{if $H^{q-1}(X, B) \neq 0$ and $q \geq 2$} \end{cases}
\end{equation}
such that if $d$ is sufficiently large, then
\begin{equation}\label{eq:main2}
K_{p,q}(X, B; L_d) \neq 0~~\Longleftrightarrow~~c_q(d) \leq p \leq r_d-c_q'(d).
\end{equation}
\end{theorem}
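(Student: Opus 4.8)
The plan is to locate the two ends of the nonvanishing interval by different means and then to rule out gaps in between.

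The left end is essentially in the literature: $c_q(d)\ge\Theta(d^{q-1})$ is the author's asymptotic vanishing theorem \cite[Theorem 1.1]{Park}, while $c_q(d)\le\Theta(d^{q-1})$ is part of Ein--Lazarsfeld's asymptotic nonvanishing theorem \cite[Theorem 4.1]{EL2}, which exhibits a nonzero class at some $p=\Theta(d^{q-1})$; so $c_q(d)=\Theta(d^{q-1})$ once one knows the nonvanishing range is an honest interval. For the right end I would invoke Koszul duality: for $d\gg0$ there is an isomorphism
\[
K_{p,q}(X,B;L_d)\;\cong\;K_{r_d-n-p,\,n+1-q}\!\left(X,\,\omega_X\otimes B^{-1};\,L_d\right)^{\vee},
\]
unconditional in weights $q=0$ and $q=n+1$ (where it recovers the two classical descriptions quoted above), and, asymptotically, valid in weight $q$ precisely when the twist-zero cohomology $H^{q-1}(X,B)$ vanishes (weight $q=1$ imposing no condition). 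In that \emph{good} case, applying \cite{Park} and the dual of \cite{EL2} to $\omega_X\otimes B^{-1}$ in weight $n+1-q$ converts the right-end problem into the left-end problem there: the vanishing of $K_{p,q}(X,B;L_d)$ for $p$ near $r_d$ comes from the left-end vanishing of \cite{Park}, and the nonvanishing up to $p=r_d-\Theta(d^{n-q})$ from \cite{EL2}, giving $c_q'(d)=\Theta(d^{n-q})$.

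When $H^{q-1}(X,B)\ne0$ and $q\ge2$ the displayed isomorphism breaks, and this is exactly what produces the exceptional value $c_q'(d)=q-1$. Here I would argue on the resolution directly: for $d\gg0$ the local cohomology $H^i_{\mathfrak m}(R_d)=\bigoplus_m H^{i-1}(X,B\otimes L_d^m)$ (where $\mathfrak m\subset S_d$ is the irrelevant ideal) collapses, for $2\le i\le n$, to $H^{i-1}(X,B)$ in internal degree $0$, so graded local duality gives $\Ext^{\,r_d+1-q}_{S_d}(R_d,S_d)\cong H^{q-1}(X,B)^{\vee}$, nonzero and concentrated in a single extremal internal degree. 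Reading this off the dual $E_\bullet^{\vee}$ of the minimal free resolution of $R_d$, combined with minimality and the known vanishing $K_{p,n+1}(X,B;L_d)=0$ for $p>r_d-n$ (and the analogous control of neighbouring weights at that homological degree), forces $K_{p,q}(X,B;L_d)=0$ for all $p>r_d-q+1$ and $K_{r_d-q+1,\,q}(X,B;L_d)\ne0$, i.e.\ $c_q'(d)=q-1$.

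It remains to show the weight-$q$ row of the Betti table has no internal zeros, i.e.\ $K_{p,q}(X,B;L_d)\ne0$ for \emph{every} $p$ with $c_q(d)\le p\le r_d-c_q'(d)$. The approach I would take is to produce, for each such $p$, an explicit nonzero syzygy: either by refining the degeneration/specialization construction of \cite{EL2} so that it runs over the whole range rather than over $\Theta(d^{q-1})\le p\le r_d-\Theta(d^{n-1})$, or by transporting to $X$, via a general finite projection $X\to\nP^n$ and the bundle $\pi_\ast$ of the appropriate twist of $B$, the gap-free nonvanishing on $\nP^n$, where every row of the Betti table of an $\mathcal O_{\nP^n}(d)$-embedding is an honest interval by Raicu's computation (used already in \cite{Park}) and the earlier work of Ottaviani--Paoletti and Ein--Lazarsfeld. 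Combining such a construction with the endpoint (non)vanishings established above --- which bracket the row from both ends --- then yields the full interval.

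The main obstacle is precisely this last step: minimality of the resolution together with the weight grading does not by itself exclude an isolated internal zero, so the statement genuinely needs a nonvanishing input that is uniform in $p$ over an interval of length $\Theta(d^n)$, and controlling the comparison with $\nP^n$ (or the degeneration) uniformly in both $p$ and $d$ is where I expect the real difficulty. A secondary point is to arrange the hypotheses of the duality isomorphism so that exactly the dichotomy $H^{q-1}(X,B)=0$ or $q=1$ emerges, and to carry out the resolution bookkeeping in the exceptional case so that the answer comes out as $q-1$ and not merely as some constant.
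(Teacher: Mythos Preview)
Your endpoint bookkeeping is partly right but has a subtle hole, and the central step is genuinely missing. For $c_q'(d)=q-1$ when $H^{q-1}(X,B)\neq 0$ and $q\ge 2$, the paper's argument is more direct than your local-duality computation: simply $K_{r_d-q+1,q}(X,B;L_d)=H^{q-1}(X,\wedge^{r_d}M_{L_d}\otimes B\otimes L_d)=H^{q-1}(X,B)\neq 0$ by Proposition~\ref{prop:koszulcoh}, while $K_{p,q}=0$ for $p\ge r_d-q+2$ is elementary. In the ``good'' case your duality correctly gives $c_q'(d)\ge\Theta(d^{n-q})$ (the injection $K_{p,q}\hookrightarrow K_{r_d-p-n,\,n+1-q}(X,\omega_X\otimes B^{-1};L_d)^\vee$ needs only $H^{q-1}(X,B)=0$), but the reverse inequality via \emph{surjectivity} of that map would also require $H^q(X,B)=0$, which is not assumed; so duality plus \cite{EL2} alone does not pin down $c_q'(d)$.

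The real gap is the one you name: ruling out internal zeros. Neither a refinement of the secant construction in \cite{EL2} nor transport from $\nP^n$ is what the paper does, and in fact the paper avoids \cite[Theorem~4.1]{EL2} entirely. Its mechanism is a hypersurface lift. Choose $H$ very ample (independent of $d$) with $R(X,B\otimes H^{\pm 1};L_d)$ Cohen--Macaulay, take a general $\overline{X}\in|H|$, and fix a splitting $H^0(X,L_d)=V_d'\oplus H^0(\overline{X},\overline{L}_d)$ with $V_d'=H^0(X,L_d\otimes H^{-1})$. The sequences $0\to B\to B\otimes H\to\overline{B}\otimes\overline{H}\to 0$ and $0\to B\otimes H^{-1}\to B\to\overline{B}\to 0$ induce
\[
\theta_{p,q}\colon \overline{K}_{p+1,q-1}(X,\overline{B}\otimes\overline{H};L_d)\longrightarrow K_{p,q}(X,B;L_d),\qquad
\theta'_{p,q}\colon K_{p,q}(X,B;L_d)\longrightarrow K_{p,q}(X,\overline{B};L_d),
\]
and the splitting decomposes the source of $\theta$ and the target of $\theta'$ as $\bigoplus_j\wedge^j V_d'\otimes(\text{Koszul on }\overline{X})$. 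The key point (Theorem~\ref{thm:lifting}) is elementary on this decomposition: if $\theta_{c,q}\neq 0$ then $\theta_{p,q}\neq 0$ for every $c\le p\le r_d-\overline{r}_d-1$, since one may wedge in one more element of $V_d'$ and track a distinguished summand through the evaluation map $\iota\colon K_{p+1,\bullet}\to V\otimes K_{p,\bullet}$; dually for $\theta'$. Thus gap-free nonvanishing over the whole interval reduces to showing $\theta_{c_q(d),q}$ and $\theta'_{r_d-c_q'(d),q}$ are nonzero at the two extremes. That, in turn, is forced by the asymptotic \emph{vanishing} theorem: with $H$ chosen carefully (Lemma~\ref{lem:technical2}) one obtains $K_{c_q(d),q}(X,B\otimes H;L_d)=0$ and $K_{r_d-c_q'(d),q}(X,B\otimes H^{-1};L_d)=0$, so the long exact sequences make $\theta_{c_q(d),q}$ surjective onto a nonzero group and $\theta'_{r_d-c_q'(d),q}$ injective on one (Proposition~\ref{prop:theta}). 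The estimates on $c_q(d)$ and $c_q'(d)$ themselves (Proposition~\ref{prop:Theta}) come from the same induction on $\dim X$ through $\theta,\theta'$, using only \cite{Park}; this is also what supplies $c_q'(d)\le\Theta(d^{n-q})$ without the extra hypothesis $H^q(X,B)=0$.
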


To prove Theorem \ref{thm:main}, we do not use Ein--Lazarsfeld's asymptotic nonvanishing theorem \cite[Theroem 4.1]{EL2}, but we adopt their strategy in \cite{EL2}. Let $H$ be a suitably positive very ample line bundle on $X$, and choose a general member $\overline{X} \in |H|$. Put
$$
\overline{L}_d:=L_d|_{\overline{X}},~~\overline{B}:=B|_{\overline{X}},~~\overline{H}:=H|_{\overline{X}}, ~~V_d':=H^0(X, L_d \otimes H^{-1}).
$$
We have noncanonical splitting\\[-20pt]

\begin{footnotesize}
$$
K_{p,q}(X, \overline{B}; L_d) = \bigoplus_{j=0}^p \wedge^j V_d' \otimes K_{p-j, q}(\overline{X}, \overline{B}; \overline{L}_d)~~\text{ and }~~
K_{p,q}(X, \overline{B} \otimes \overline{H}; L_d) =   \bigoplus_{j=0}^p \wedge^j V_d' \otimes K_{p-j, q}(\overline{X}, \overline{B} \otimes \overline{H}; \overline{L}_d).
$$
\end{footnotesize}

\noindent There are natural maps
$$
\theta_{p,q} \colon K_{p+1, q-1}(X, \overline{B} \otimes \overline{H}; L_d) \longrightarrow K_{p,q}(X, B; L)~~\text {and }~~\theta_{p,q}' \colon K_{p,q}(X, \overline{B}; L_d) \longrightarrow K_{p,q}(X, B; L_d).
$$
When $q=1$ or $2$, the map $\theta_{p,q}$ should be modified (see Section \ref{sec:lifting}). In \cite[Sections 3 and 4]{EL2}, the secant constructions were introduced to carry nonzero syzygies by highly secant planes. This essentially shows that  the map $\theta_{p,q}$ is nonzero for $\Theta(d^{q-1}) \leq p \leq r_d - \Theta(d^{n-1})$. Instead of utilizing the secant constructions, in this paper, we apply the asymptotic vanishing theorem \cite[Theorem 1.1]{Park} to get the estimations of $c_q(d)$ and $c_q'(d)$ (Proposition \ref{prop:Theta}) and to see that the maps $\theta_{c_q(d), q}$ and $\theta_{r_d - c_q'(d), q}'$ are nonzero (Proposition \ref{prop:theta}). The latter statement means that there are syzygies $\alpha$ and $\beta$ in 
\begin{equation}\label{eq:syzinH}\tag{$\star$}
K_{c_q(d)+1-j_0, q-1}(\overline{X}, \overline{B} \otimes \overline{H}; \overline{L}_d)~~\text{ and }~~K_{r_d-c_q'(d)-j_0', q}(\overline{X}, \overline{B}; \overline{L}_d)
\end{equation}
for some $0 \leq j_0 \leq c_q(d)+1$ and $\dim V_d' - c_q'(d) \leq j_0' \leq \dim V_d'$ that are lifted to syzygies in $K_{c_q(d), q}(X, B; L_d)$ and $K_{r_d - c_q'(d), q}(X, B; L_d)$ via the maps $\theta_{c_q(d), q}$ and $\theta_{r_d - c_q'(d), q}'$, respectively. Since (\ref{eq:syzinH}) survives in\\[-22pt]

\begin{small}
$$
\text{$K_{p,q}(X, \overline{B} \otimes \overline{H}; L_d)$ for $c_q(d) \leq p \leq r_d - \Theta(d^{n-1})$ and $K_{p,q}(X, \overline{B}; L_d)$ for $\Theta(d^{n-1}) \leq p \leq r_d - c_q'(d)$},
$$
\end{small}

\noindent we can argue that the syzygies $\alpha$ and $\beta$ in  (\ref{eq:syzinH}) are also lifted to syzygies in $K_{p,q}(X, B; L_d)$ for $c_q(d) \leq p \leq r_d - c_q'(d)$ via the maps $\theta_{p, q}$ and $\theta_{p, q}'$, respectively (Theorem \ref{thm:lifting}).

\medskip

The paper is organized as follows. After recalling preliminary results on syzygies of algebraic varieties in Section \ref{sec:prelim}, we show how to lift syzygies from hypersurfaces (Theorem \ref{thm:lifting}) in Section \ref{sec:lifting}. Section \ref{sec:proof} is devoted to the proof of Theorem \ref{thm:main}. Finally, in Section \ref{sec:openprob}, we present complementary results and open problems on asymptotic syzygies of algebraic varieties.

\subsection*{Acknowledgements}
The author is very grateful to Lawrence Ein, Yeongrak Kim, and Wenbo Niu for inspiring discussions.

\section{Preliminaries}\label{sec:prelim}

In this section, we collect relevant basic facts on Koszul cohomology and Castelnuovo--Mumford regularity.

\subsection{Koszul Cohomology}
Let $V$ be an $r$-dimensional vector space over an algebraically closed field $\mathbf{k}$, and  $S:= \bigoplus_{m \geq 0} S^m V$. Consider a finitely generated graded $S$-module $M$. The \emph{Koszul cohomology group} $K_{p,q}(M, V)$ is the cohomology of the Koszul-type complex
$$
\wedge^{p+1} V \otimes M_{q-1} \xrightarrow{~\delta~} \wedge^p V \otimes M_q \xrightarrow{~\delta~} \wedge^{p-1} V \otimes M_{q+1},
$$
where the Koszul differential $\delta$ is given by
$$
\delta(s_1 \wedge \cdots \wedge s_p \otimes t) \longmapsto \sum_{i=1}^p (-1)^i s_1 \wedge \cdots \wedge \widehat{s}_i \wedge \cdots \wedge s_p \otimes s_i t.
$$
Then $M$ has a minimal free resolution
$$
 \xymatrix{
 0 & M \ar[l]& E_0 \ar[l]  & E_1  \ar[l] & \ar[l] \cdots & \ar[l] E_r \ar[l]  & \ar[l] 0, 
 }
$$
where
$$
E_p = \bigoplus_{q} K_{p,q}(M, V)  \otimes_{\mathbf{k}} S(-p-q).
$$
We may regard $K_{p,q}(M,V)$ as the vector space of $p$-th syzygies of weight $q$. Let 
$$
0 \longrightarrow M' \longrightarrow M \longrightarrow M'' \longrightarrow 0
$$
be a short exact sequence of finitely generated graded $S$-modules. By \cite[Corollary (1.d.4)]{Green} (see also \cite[Lemma 1.24]{AN}), this induces a long exact sequence
$$
\cdots \longrightarrow K_{p+1, q-1}(M, V) \longrightarrow K_{p+1, q-1}(M'', V) \longrightarrow K_{p,q}(M', V) \longrightarrow K_{p,q}(M, V) \longrightarrow \cdots.
$$

\medskip

Consider an injective map
$$
\iota' \colon \wedge^{p+1} V \longrightarrow V \otimes \wedge^p V,~~s_1 \wedge \cdots \wedge s_{p+1} \longmapsto \sum_{i=1}^{p+1} (-1)^i s_i \otimes s_1 \wedge \cdots \wedge \widehat{s}_i \wedge \cdots \wedge s_{p+1}.
$$
It is straightforward to check that the following diagram commutes:
$$
\xymatrixcolsep{0.69in}
\xymatrix{
\wedge^{p+1} V \otimes M_q \ar[r]^-{\delta} \ar[d]_-{\iota' \otimes \id_{M_q}} & \wedge^p V \otimes M_{q+1} \ar[d]^-{\iota' \otimes  \id_{M_{q+1}}} \\
V \otimes \wedge^p V \otimes M_q \ar[r]_-{-\id_{V} \otimes \delta} & V \otimes \wedge^{p-1} V \otimes M_{q+1}.
}
$$
Then the map $\iota'$ induces a map
$$
\iota \colon K_{p+1,q}(M, V) \longrightarrow V \otimes K_{p,q}(M, V).
$$
This map is the glueing of the evaluation maps $\ev_x \colon K_{p+1,q}(M, V) \longrightarrow K_{p,q}(M, V)$ for $x \in V^{\vee}$ in \cite[Subsection 2.2.1]{AN}.

\medskip

We now turn to the geometric setting. Let $X$ be a projective variety, $B$ be a coherent sheaf on $X$, and $L$ be a very ample line bundle on $X$. Put $V:=H^0(X, L)$ and $S:=\bigoplus_{m \geq 0} S^m V$. Then the section module
$$
R(X, B; L):=\bigoplus_{m \in \mathbf{Z}} H^0(X, B \otimes L^m),
$$
is finitely generated graded $S$-module. We define the \emph{Koszul cohomology group} as
$$
K_{p,q}(X, B; L):=K_{p,q}(R(X,B;L), V).
$$
In this paper, $L$ is always assumed to be sufficiently positive, so we  have 
$$
H^0(X, B \otimes L^{-m})=0~~\text{ for $m>0$}. 
$$
Then $K_{p,q}(X, B; L) = 0$ for $q < 0$. It is clear that if $K_{p_0,q}(X, B; L)=0$ for $0 \leq q \leq q_0$, then $K_{p, q}(X, B; L ) = 0$ for $p \geq p_0$ and $0 \leq q \leq q_0$. Let $M_L$ be the kernel bundle of the evaluation map $\ev \colon H^0(X, L)\otimes \sO_{X} \rightarrow  L$. We have a short exact sequence
$$
0 \longrightarrow M_L \longrightarrow H^0(X, L) \otimes \sO_X \longrightarrow L \longrightarrow 0.
$$
We frequently use the following well-known facts.

\begin{proposition}[{cf. \cite[Proposition 3.2, Corollary 3.3, Remark 3.4]{EL2}, \cite[Proposition 2.1]{Park}}]\label{prop:koszulcoh}
Assume that 
$$
H^i(X, B \otimes L^m)=0~~\text{ for $i >0$ and $m >0$.}
$$ 
For any $p \geq 0$, the following hold:\\[3pt]
$(1)$ If $q \geq 2$, then 
$$
\begin{array}{rcl}
K_{p,q}(X, B; L)&=& H^1(X, \wedge^{p+1} M_L \otimes B \otimes L^{q-1})\\
&=&  H^2(X, \wedge^{p+2} M_L \otimes B \otimes L^{q-2})\\
&  & ~\text{ }~\text{ }~\text{ } ~\text{ }~\text{ }~\text{ }~\text{ }~ \vdots \\
&=& H^{q-1}(X, \wedge^{p+q-1}M_L \otimes B \otimes L).
\end{array}
$$
Consequently, $K_{p, q}(X, B; L_d) = 0$ for $p \geq r_d-q$, and $K_{p,q}(X, B; L_d) = 0$ for $q \geq \dim X+2$.\\[3pt]
$(2)$ If $q \geq 2$ and $H^{q-1}(X, B) = H^q(X, B) = 0$, then $K_{p,q}(X, B; L) = H^q(X, \wedge^{p+q} M_L \otimes B)$.\\[3pt]
$(3)$ If $q=1$, then 
$$
K_{p,1}(X, B; L) = \coker\big( \wedge^{p+1} H^0(X, L) \otimes H^0(X, B) \longrightarrow H^0(X,  \wedge^p M_L \otimes B \otimes L) \big).
$$
If $H^1(X, B) =0$, then $K_{p,1}(X, B; L) = H^1(X, \wedge^{p+1} M_L \otimes B)$.\\[3pt]
$(4)$ If $q=0$ and $H^0(X, B \otimes L^{-m})=0$ for $m > 0$, then $K_{p,0}(X, B;L ) = H^0(X, \wedge^{p} M_L \otimes B)$.
\end{proposition}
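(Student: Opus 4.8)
The statement to prove is Proposition~\ref{prop:koszulcoh}, a standard package of computations of Koszul cohomology via the kernel bundle $M_L$. Here is how I would organize the proof.

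\medskip

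The plan is to run everything through the tautological Koszul complex obtained from the defining sequence $0 \to M_L \to H^0(X,L)\otimes\sO_X \to L \to 0$. Taking exterior powers of this sequence gives, for each $j \geq 1$, the exact sequence
$$
0 \longrightarrow \wedge^{j} M_L \longrightarrow \wedge^{j} H^0(X,L)\otimes \sO_X \longrightarrow \wedge^{j-1} M_L \otimes L \longrightarrow 0,
$$
and more generally a (long) Koszul resolution of $\wedge^{j-1}M_L\otimes L$ by the sheaves $\wedge^{j}H^0(X,L)\otimes\sO_X$, $\wedge^{j+1}H^0(X,L)\otimes L^{-1}$, and so on. The first key step is to identify $K_{p,q}(X,B;L)$ with the cohomology of a two-term piece of $\wedge^{p+1}M_L\otimes B\otimes L^{q-1}$; the classical computation (as in \cite[Proposition 3.2]{EL2}) shows that $K_{p,q}(X,B;L)$ is the cokernel of
$$
\wedge^{p+1}H^0(X,L)\otimes H^0(X,B\otimes L^{q-1}) \longrightarrow H^0(X,\wedge^{p}M_L\otimes B\otimes L^{q})
$$
modulo the image coming from lower weight, which, under the blanket hypothesis $H^i(X,B\otimes L^m)=0$ for $i>0$, $m>0$, collapses to $H^1(X,\wedge^{p+1}M_L\otimes B\otimes L^{q-1})$ when $q\geq 2$. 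This handles the first equality in part $(1)$ and, after the trivial bookkeeping for $q=1$ (where one keeps the cokernel term because $H^0(X,B\otimes L^{q-1})=H^0(X,B)$ need not inject) and $q=0$ (directly from $\wedge^{p}M_L\subseteq \wedge^{p}H^0(X,L)\otimes\sO_X$ and the vanishing of $H^0(X,B\otimes L^{-m})$), gives the last line of part $(3)$ and all of part $(4)$.

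\medskip

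The second key step is the ``staircase'' of equalities in part $(1)$: chasing the exact sequence
$$
0 \longrightarrow \wedge^{j+1}M_L\otimes B\otimes L^{q-1} \longrightarrow \wedge^{j+1}H^0(X,L)\otimes B\otimes L^{q-1} \longrightarrow \wedge^{j}M_L\otimes B\otimes L^{q} \longrightarrow 0
$$
through cohomology. The hypothesis forces $H^i(X,\wedge^{j+1}H^0(X,L)\otimes B\otimes L^{q-1})=0$ for $i>0$ whenever $q-1>0$, i.e. $q\geq 2$, so the connecting maps give isomorphisms $H^{i}(X,\wedge^{j}M_L\otimes B\otimes L^{q})\cong H^{i+1}(X,\wedge^{j+1}M_L\otimes B\otimes L^{q-1})$ for $i\geq 1$. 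Iterating, starting from $H^1(X,\wedge^{p+1}M_L\otimes B\otimes L^{q-1})$ and decreasing the twist one at a time, produces the chain down to $H^{q-1}(X,\wedge^{p+q-1}M_L\otimes B\otimes L)$, which is exactly the displayed staircase. For the two ``Consequently'' assertions: $K_{p,q}=0$ for $q\geq \dim X+2$ is immediate because the bottom of the staircase is an $H^{q-1}$ on an $n$-dimensional variety (and for $q=n+1$ one needs the extra Serre-duality observation, but for $q\geq n+2$ it is cohomological vanishing by dimension), and $K_{p,q}=0$ for $p\geq r_d-q$ follows because $\wedge^{p+1}M_{L_d}$ has rank exceeding $\rank M_{L_d}=r_d$ once $p+1>r_d$, hence is zero, and the intermediate cases are squeezed out by the same exact sequences.

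\medskip

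The third step is part $(2)$: when additionally $H^{q-1}(X,B)=H^{q}(X,B)=0$, one wants to extend the staircase one more notch past $L^{1}$ down to $L^{0}$, i.e. to $H^{q}(X,\wedge^{p+q}M_L\otimes B)$. This uses the sequence $0\to \wedge^{p+q}M_L\otimes B\to \wedge^{p+q}H^0(X,L)\otimes B\to \wedge^{p+q-1}M_L\otimes B\otimes L\to 0$; now the middle term has cohomology $\wedge^{p+q}H^0(X,L)\otimes H^{i}(X,B)$, which vanishes for $i=q-1$ and $i=q$ by the added hypothesis, so again the connecting map gives $H^{q-1}(X,\wedge^{p+q-1}M_L\otimes B\otimes L)\cong H^{q}(X,\wedge^{p+q}M_L\otimes B)$. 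Combining with part $(1)$ gives the claim.

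\medskip

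I expect the main subtlety to be not any single diagram chase but getting the boundary cases of the induction right: precisely which hypothesis ($q\geq 2$ versus $q=1$ versus $q=0$, and the presence or absence of $H^{q-1}(X,B),H^{q}(X,B)$) is being invoked at each notch of the staircase, and ensuring the identification of $K_{p,q}$ with $H^1(X,\wedge^{p+1}M_L\otimes B\otimes L^{q-1})$ is legitimate (this is where the ``$H^0(X,B\otimes L^{-m})=0$ for $m>0$'' convention and the blanket positivity of $L$ are used to kill the contributions that would otherwise appear from weights below $q$). Since all of this is standard and appears essentially in the cited references, I would phrase the proof as: recall the tautological sequences, do the weight-$q$ identification once, then present the staircase as a single induction with the endpoint adjustments for $(2)$, $(3)$, $(4)$ noted explicitly; the bulk is routine and can be compressed to a few lines with pointers to \cite[Proposition 3.2, Corollary 3.3, Remark 3.4]{EL2} and \cite[Proposition 2.1]{Park}.
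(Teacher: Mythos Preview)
Your proposal is correct and follows exactly the paper's approach: the paper's own proof is a terse three-line sketch that writes down the short exact sequence $0 \to \wedge^{p+1} M_L \to \wedge^{p+1} H^0(X,L) \otimes \sO_X \to \wedge^p M_L \otimes L \to 0$, records the cokernel description $K_{p,q}(X,B;L)=\coker\big(\wedge^{p+1}H^0(X,L)\otimes H^0(X,B\otimes L^{q-1})\to H^0(X,\wedge^p M_L\otimes B\otimes L^q)\big)$, and then says ``the proposition easily follows'' with pointers to \cite{AN}, \cite{EL1}, \cite{EL2}; your expanded staircase argument and endpoint bookkeeping are exactly what those references contain. One small slip worth fixing: for the vanishing when $p$ is large you should invoke the bottom rung $\wedge^{p+q-1} M_{L_d}$ (which vanishes once $p+q-1 > r_d$), not $\wedge^{p+1} M_{L_d}$.
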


\begin{proof}
We have a short exact sequence
\begin{equation}\label{eq:wedgeM_L}
0 \longrightarrow \wedge^{p+1} M_L \longrightarrow \wedge^{p+1} H^0(X, L) \otimes \sO_X \longrightarrow  \wedge^{p} M_L \otimes L \longrightarrow  0.
\end{equation}
Using the Koszul-type complex and chasing through the diagram, we see that
$$
K_{p,q}(X, B; L) = \coker\big(\wedge^{p+1} H^0(X, L) \otimes H^0(X, B \otimes L^{q-1}) \longrightarrow H^0(X, \wedge^p M_L \otimes B \otimes L^q) \big).
$$
Then the proposition easily follows. See \cite[Section 2.1]{AN}, \cite[Section 1]{EL1}, \cite[Section 3]{EL2}. 
\end{proof}

\begin{proposition}[{cf. \cite[Proposition 2.4]{Agostini}, \cite[Proposition 3.5]{EL2}}]\label{prop:duality}
Put $n:=\dim X$. Assume that $X$ is smooth, $B$ is a line bundle, and 
$$
H^i(X, B \otimes L^m)=0~~\text{ for $i >0$ and $m >0$ or $i< n$ and $m<0$.}
$$ 
For any $p \geq 0$, the following hold:\\[3pt]
$(1)$ If $q=n+1$, then $K_{p, n+1}(X, B; L) = K_{r-p-n, 0}(X, \omega_X \otimes B^{-1}; L)^{\vee}$.\\[3pt]
$(2)$ If $q=n$, then there is an exact sequence
$$
\wedge^{p+n} H^0(X, L) \otimes H^{n-1}(X, B) \longrightarrow K_{p,n}(X, B; L) \longrightarrow K_{r-p-n, 1}(X, \omega_X \otimes B^{-1}; L_d)^{\vee} \longrightarrow 0.
$$
If $H^{n-1}(X, B) = 0$, then $K_{p,n}(X, B; L) = K_{r-p-n, 1}(X, \omega_X \otimes B^{-1}; L)^{\vee}$.\\[3pt]
$(3)$ If $2 \leq q \leq n-1$, then there is an exact sequence
$$
\begin{array}{l}
\wedge^{p+q} H^0(X, L) \otimes H^{q-1}(X, B) \longrightarrow K_{p,q}(X, B; L) \\[3pt]
\text{ }~\text{ }~\text{ }\longrightarrow K_{r-p-n, n+1-q}(X, \omega_X \otimes B^{-1}; L_d)^{\vee}  \longrightarrow \wedge^{p+q} H^0(X, L) \otimes H^q(X, B).
\end{array}
$$
If $H^{q-1}(X, B) = H^q(X, B)=0$, then $K_{p,q}(X, B; L) = K_{r-p-n, n+1-q}(X, \omega_X \otimes B^{-1}; L_d)^{\vee}$.\\[3pt]
$(4)$ If $q=1$, then there is an exact sequence
$$
0 \longrightarrow K_{p,1}(X, B; L) \longrightarrow K_{r-p-n, n}(X, \omega_X \otimes B^{-1}; L_d)^{\vee}  \longrightarrow \wedge^{p+1} H^0(X, L) \otimes H^1(X, B)
$$
If $H^1(X, B)=0$, then $K_{p,1}(X, B; L) = K_{r-p-n, n}(X, \omega_X \otimes B^{-1}; L_d)^{\vee}$.\\[3pt]
$(5)$ If $q=0$, then $K_{p,0}(X, B; L) = K_{r-p-n, n+1}(X, \omega_X \otimes B^{-1}; L)^{\vee}$.
\end{proposition}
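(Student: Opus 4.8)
The plan is to obtain Proposition \ref{prop:duality} by feeding the sheaf-cohomological descriptions of Koszul cohomology from Proposition \ref{prop:koszulcoh} into Serre duality; this is the analogue for twisted section modules of Green's duality theorem \cite[Theorem 2.c.6]{Green} (see also \cite[Section 2]{AN}). Two preliminary observations organize everything. First, the kernel bundle $M_L$ has rank $r$ and $\det M_L = L^{-1}$, directly from $0 \to M_L \to H^0(X,L) \otimes \sO_X \to L \to 0$; hence there is a natural isomorphism $\wedge^k M_L^{\vee} \cong \wedge^{r-k} M_L \otimes L$ for every $k$. Second, by Serre duality the hypothesis ``$H^i(X, B \otimes L^m) = 0$ for $i<n$ and $m<0$'' is equivalent to ``$H^j(X, (\omega_X \otimes B^{-1}) \otimes L^{m'}) = 0$ for $j>0$ and $m'>0$'', so Proposition \ref{prop:koszulcoh} applies verbatim both to $B$ and to $B' := \omega_X \otimes B^{-1}$, and $\omega_X \otimes B'^{-1} = B$. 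Consequently it suffices to prove (5) and (3) in the range $2 \le q \le n-1$ and (2); then (1) follows from (5), (4) follows from (2), and the missing half of the range of (3) follows from (3) itself, all by applying the already-proved statement to $B'$, dualizing, and shifting $p \mapsto r-p-n$.

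For (5), Proposition \ref{prop:koszulcoh}(4) gives $K_{p,0}(X,B;L) = H^0(X, \wedge^p M_L \otimes B)$; Serre duality together with the wedge identity turns this into $H^n(X, \wedge^{r-p} M_L \otimes B' \otimes L)^{\vee}$, which is $K_{r-p-n, n+1}(X, B'; L)^{\vee}$ by the last identity of Proposition \ref{prop:koszulcoh}(1) applied to $B'$ with weight $n+1$; dualizing and replacing $B$ by $B'$ yields (1). For $2 \le q \le n-1$, write $K_{p,q}(X,B;L) = H^{q-1}(X, \wedge^{p+q-1} M_L \otimes B \otimes L)$ by Proposition \ref{prop:koszulcoh}(1), then take cohomology of (\ref{eq:wedgeM_L}) with index $k = p+q-1$, twisted by $B$, to get the exact sequence
$$
\wedge^{p+q} H^0(X,L) \otimes H^{q-1}(X,B) \longrightarrow K_{p,q}(X,B;L) \longrightarrow H^q(X, \wedge^{p+q} M_L \otimes B) \longrightarrow \wedge^{p+q} H^0(X,L) \otimes H^q(X,B).
$$
Serre duality and the wedge identity identify $H^q(X, \wedge^{p+q} M_L \otimes B)$ with $H^{n-q}(X, \wedge^{r-p-q} M_L \otimes B' \otimes L)^{\vee}$, and Proposition \ref{prop:koszulcoh}(1) applied to $B'$ with weight $n+1-q \ge 2$ rewrites this as $K_{r-p-n, n+1-q}(X, B'; L)^{\vee}$; this is (3), and the collapsed form when $H^{q-1}(X,B) = H^q(X,B) = 0$, as well as the vanishing assertions, are read off directly. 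Case (2) is the same computation with $q = n$, except that the dual group has weight $n+1-q = 1$, so one uses the cokernel description in Proposition \ref{prop:koszulcoh}(3); after Serre-dualizing both $H^0(X, \wedge^{r-p-n} M_L \otimes B' \otimes L)$ and $H^0(X, B')$, the connecting map of the long exact sequence matches the dual of the Koszul differential and one obtains $\wedge^{p+n} H^0(X,L) \otimes H^{n-1}(X,B) \to K_{p,n}(X,B;L) \to K_{r-p-n, 1}(X, B'; L)^{\vee} \to 0$. Then (4) is the weight-$1$ statement dual to (2): apply (2) to $B'$, dualize the three-term exact sequence, use $H^{n-1}(X,B')^{\vee} = H^1(X,B)$, and substitute $p \mapsto r-p-n$.

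The step that requires genuine care is the pair of edge weights $q=1$ and $q=n$: there the relevant Koszul group of $B'$ (respectively of $B$) admits only a cokernel description, so one must verify that dualizing the Koszul differential really does coincide with the connecting homomorphism arising from (\ref{eq:wedgeM_L})$\otimes B$ — that is, that the maps in the asserted exact sequences are the natural ones — and one must keep track of the canonical identifications $\wedge^{r-k+1} H^0(X,L)^{\vee} \cong \wedge^{k} H^0(X,L)$ coming from a trivialization of $\det H^0(X,L)$, together with the usual sign arising in the Koszul differential. None of this is deep; once the isomorphism $\wedge^k M_L^{\vee} \cong \wedge^{r-k} M_L \otimes L$ and the symmetry of the hypotheses under $B \leftrightarrow \omega_X \otimes B^{-1}$ are in place, the remaining steps are formal, and throughout one checks that the vanishing hypotheses make every invocation of Proposition \ref{prop:koszulcoh} (for $B$ and for $B'$) legitimate.
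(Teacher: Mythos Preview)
Your proposal is correct and follows essentially the same approach as the paper: both arguments combine the long exact sequence coming from (\ref{eq:wedgeM_L}) twisted by $B$, the identification $\wedge^{p+q} M_L^{\vee} \cong \wedge^{r-p-q} M_L \otimes L$, Serre duality, and Proposition \ref{prop:koszulcoh}. Your treatment is somewhat more explicit in isolating the symmetry $B \leftrightarrow \omega_X \otimes B^{-1}$ to reduce the number of cases and in handling the edge weights $q=1,n$ via the cokernel description, but the underlying computation is the same as the paper's.
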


\begin{proof}
Let $V:=H^0(X, L)$.
From (\ref{eq:wedgeM_L}), we get an exact sequence\\[-25pt]

\begin{footnotesize}
$$
\wedge^{p+q}V \otimes H^{q-1}(X, B) \longrightarrow H^{q-1}(X, \wedge^{p+q-1} M_L \otimes B \otimes L) \longrightarrow H^q(X, \wedge^{p+q} M_L \otimes B) \longrightarrow \wedge^{p+q} V \otimes H^q(X, B).
$$
\end{footnotesize}

\noindent By Proposition \ref{prop:koszulcoh}, $H^{q-1}(X, \wedge^{p+q-1} M_L \otimes B \otimes L) = K_{p,q}(X, B; L)$ when $q \geq 2$. By Serre duality,
$$
H^q(X, \wedge^{p+q} M_L \otimes B)  = H^{n-q}(X, \wedge^{p+q} M_L^{\vee} \otimes \omega_X \otimes B^{-1})^{\vee}.
$$
Since $\rank M_L = r$ and $\det M_L = L^{-1}$, it follows that $\wedge^{p+q} M_L^{\vee} = \wedge^{r-p-q} M_L \otimes L$. Thus 
$$
H^q(X, \wedge^{p+q} M_L \otimes B) = H^{n-q}(X, \wedge^{r-p-q} M_L \otimes \omega_X \otimes B^{-1} \otimes L)^{\vee}.
$$
Using Proposition \ref{prop:koszulcoh}, the assertions easily follow. See \cite[Section 2.3]{AN}, \cite[Section 3]{EL2}.
\end{proof}

In the situation of Proposition \ref{prop:duality}, if we further assume $H^i(X, B)=0$ for $1 \leq i \leq n-1$, i.e., $R(X, B; L)$ and $R(X, \omega_X \otimes B^{-1}; L)$ are Cohen--Macaulay, then 
$$
K_{p,q}(X, B; L) = K_{r-p-n, n+1-q}(X, \omega_X \otimes B^{-1}; L)^{\vee}.
$$

\begin{lemma}\label{lem:technical1}
If $H^q(X, M_{L} \otimes \wedge^{p} M_{L} \otimes B)=0$ and  $H^q(X, B)=0$, then $H^q(X, \wedge^{p+1} M_{L} \otimes B)=0$.
\end{lemma}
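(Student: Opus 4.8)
The plan is to extract the desired vanishing from the exterior-power exact sequence
$$
0 \longrightarrow \wedge^{p+1} M_L \longrightarrow M_L \otimes \wedge^{p} M_L \longrightarrow \wedge^{p+1}(\text{something})?\longrightarrow 0
$$
wait — more precisely, the standard filtration giving
$$
0 \longrightarrow \wedge^{p+1} M_L \longrightarrow \wedge^{p+1}\big(H^0(X,L)\otimes\sO_X\big)?
$$
is not quite what I want either, since that sequence brings in $\wedge^{p}M_L\otimes L$, not $M_L\otimes\wedge^{p}M_L$. Instead I would use the Koszul-type short exact sequence coming from $0\to M_L\to H^0(X,L)\otimes\sO_X\to L\to 0$: wedging, one obtains the short exact sequence
$$
0 \longrightarrow \wedge^{p+1} M_L \longrightarrow M_L \otimes \wedge^{p} M_L \longrightarrow \wedge^{p} M_L \otimes M_L / \wedge^{p+1}M_L \longrightarrow 0,
$$
and the point is that the quotient $\big(M_L\otimes\wedge^{p}M_L\big)/\wedge^{p+1}M_L$ is naturally a subsheaf (the kernel of the multiplication map) of $\wedge^{p}M_L\otimes\big(H^0(X,L)\otimes\sO_X\big)/$ ... — this is getting circular.

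Let me instead take the cleanest route, which is almost certainly what the authors intend. First I would tensor the defining sequence
$$
0 \longrightarrow M_L \longrightarrow H^0(X,L)\otimes \sO_X \longrightarrow L \longrightarrow 0
$$
by $\wedge^{p}M_L \otimes B$ to get
$$
0 \longrightarrow \wedge^{p}M_L \otimes M_L \otimes B \longrightarrow H^0(X,L)\otimes \wedge^{p}M_L \otimes B \longrightarrow \wedge^{p}M_L \otimes B \otimes L \longrightarrow 0.
$$
There is a natural surjection $\wedge^{p}M_L\otimes M_L \twoheadrightarrow \wedge^{p+1}M_L$ with kernel $K^{p+1}$, giving
$$
0 \longrightarrow K^{p+1} \otimes B \longrightarrow \wedge^{p}M_L \otimes M_L \otimes B \longrightarrow \wedge^{p+1}M_L \otimes B \longrightarrow 0.
$$
Taking cohomology of the second sequence, the hypothesis $H^q(X, M_L\otimes\wedge^{p}M_L\otimes B)=0$ together with $H^q(X, H^0(X,L)\otimes\wedge^{p}M_L\otimes B) = H^0(X,L)\otimes H^q(X,\wedge^{p}M_L\otimes B)$ and $H^q(X,B)=0$ — hmm, but $H^q(X,\wedge^{p}M_L\otimes B)$ is exactly a Koszul group I do not control. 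So this approach needs care: I must instead run induction on $p$, using $H^q(X,B)=0$ as the base case $p=0$ (where $\wedge^{0}M_L=\sO_X$) and feeding the conclusion back in.

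So the actual plan: induct on $p$. For $p=0$, $\wedge^{1}M_L\otimes B = M_L\otimes B$, and the hypothesis $H^q(X,M_L\otimes\wedge^{0}M_L\otimes B)=H^q(X,M_L\otimes B)=0$ is literally the conclusion, so there is nothing to prove — actually the base case is $p=0$ trivially. For the inductive step, suppose $H^q(X,\wedge^{p}M_L\otimes B)=0$ is known; then from the sequence $0\to K^{p+1}\otimes B\to M_L\otimes\wedge^{p}M_L\otimes B\to \wedge^{p+1}M_L\otimes B\to 0$ and the hypothesis $H^q(X,M_L\otimes\wedge^{p}M_L\otimes B)=0$, I get a surjection $H^q(X,M_L\otimes\wedge^{p}M_L\otimes B)\twoheadrightarrow H^q(X,\wedge^{p+1}M_L\otimes B)$, hence $H^q(X,\wedge^{p+1}M_L\otimes B)=0$, which is the conclusion. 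Wait — but this uses the hypothesis only for the index $p$, not needing the inductive hypothesis at all, so there is no induction needed: the single short exact sequence $0\to K^{p+1}\otimes B\to M_L\otimes\wedge^{p}M_L\otimes B\to\wedge^{p+1}M_L\otimes B\to 0$ immediately yields the surjection in cohomology, and $H^q$ of the middle term vanishes by hypothesis. The only remaining point is to justify the existence of this short exact sequence of locally free sheaves — i.e. that the multiplication map $M_L\otimes\wedge^{p}M_L\to\wedge^{p+1}M_L$ is surjective with locally free kernel; this is a purely algebraic fact about exterior powers of a locally free sheaf and holds over any field. The hypothesis $H^q(X,B)=0$ appears to be superfluous for this argument, which suggests the authors include it either for uniformity with neighboring lemmas or because their intended proof routes through the sequence (\ref{eq:wedgeM_L}) instead; in any case, I expect the main (and essentially only) subtlety is setting up the correct short exact sequence of sheaves, after which the cohomology chase is immediate.
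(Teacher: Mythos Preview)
Your final argument has a real gap. From the short exact sequence
\[
0 \longrightarrow K^{p+1}\otimes B \longrightarrow M_L\otimes\wedge^{p}M_L\otimes B \longrightarrow \wedge^{p+1}M_L\otimes B \longrightarrow 0
\]
the long exact sequence in cohomology reads
\[
\cdots \to H^q(M_L\otimes\wedge^{p}M_L\otimes B) \to H^q(\wedge^{p+1}M_L\otimes B) \to H^{q+1}(K^{p+1}\otimes B) \to \cdots,
\]
so the map $H^q(M_L\otimes\wedge^{p}M_L\otimes B)\to H^q(\wedge^{p+1}M_L\otimes B)$ is \emph{not} automatically surjective; you would need to know something about the connecting map into $H^{q+1}(K^{p+1}\otimes B)$, and you have no control over that term. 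In characteristic zero you can rescue this by observing that the contraction $\wedge^{p+1}M_L\to M_L\otimes\wedge^{p}M_L$ composed with the wedge map is $(p+1)\cdot\id$, so the sequence splits and $H^q(\wedge^{p+1}M_L\otimes B)$ is a direct summand of the vanishing middle term. But the paper works over a field of arbitrary characteristic, so this splitting is unavailable in general.

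This is exactly why the hypothesis $H^q(X,B)=0$ is there --- it is not superfluous. The paper's proof compares your short exact sequence with the standard one
\[
0 \longrightarrow \wedge^{p+1}M_L \longrightarrow \wedge^{p+1}V\otimes\sO_X \longrightarrow \wedge^{p}M_L\otimes L \longrightarrow 0
\]
via the obvious map of sequences (both have quotient $\wedge^{p}M_L\otimes L$). In the resulting commutative diagram of long exact sequences, the bottom row shows that $H^{q-1}(\wedge^{p}M_L\otimes B\otimes L)\to H^q(\wedge^{p+1}M_L\otimes B)$ is surjective because $\wedge^{p+1}V\otimes H^q(X,B)=0$; commutativity of the left square then forces the vertical map $H^q(M_L\otimes\wedge^{p}M_L\otimes B)\to H^q(\wedge^{p+1}M_L\otimes B)$ to be surjective as well. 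So the missing idea is precisely this comparison with the second sequence, and the second hypothesis is what makes it go through.
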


\begin{proof}
 Let $V:=H^0(X, L)$. Consider the commutative diagram with exact rows
$$
\xymatrix{
0 \ar[r]& M_{L} \otimes \wedge^p M_{L} \ar[r] \ar@{->>}[d] & V \otimes  \wedge^p M_{L} \ar@{->>}[d] \ar[r] & \wedge^p M_{L} \otimes L \ar[r] \ar@{=}[d] & 0\\
0 \ar[r] & \wedge^{p+1} M_{L} \ar[r] &\wedge^{p+1} V  \otimes \sO_{X} \ar[r] & \wedge^p M_{L} \otimes L \ar[r] & 0,
}
$$
which gives rise to the following commutative diagram with exact rows
$$
\xymatrixcolsep{0.23in}
\xymatrix{
 H^{q-1}(X, \wedge^p M_{L} \otimes B \otimes L) \ar[r] \ar@{=}[d] & H^q(X, M_{L} \otimes \wedge^p M_{L} \otimes B) \ar[d] \ar[r] & V \otimes H^q(X, \wedge^p M_{L} \otimes B) \ar[d] \\
 H^{q-1}(X, \wedge^p M_{L} \otimes B \otimes L) \ar[r]  & H^q(X, \wedge^{p+1} M_{L} \otimes B) \ar[r] & \wedge^{p+1} V \otimes H^q(X, B).
}
$$
Since $H^q(X, B)=0$, the middle vertical map is surjective. Thus the lemma follows.
\end{proof}

\subsection{Castelnuovo--Mumford Regularity}
Let $X$ be a projective variety, and $L$ be a very ample line bundle on $X$. A coherent sheaf $\sF$ on $X$ is said to be \emph{$m$-regular} with respect to $L$ if 
$$
H^i(X, \sF \otimes \sO_X(m-i))=0~~\text{ for $i>0$}.
$$
By Mumford's theorem (\cite[Theorem 1.8.5]{positivity}), if $\sF$ is $m$-regular with respect to $L$, then $\sF \otimes L^{m+\ell}$ is globally generated, the multiplication map
$$
H^0(X, \sF \otimes L^m) \otimes H^0(X, L^\ell) \longrightarrow H^0(X, \sF \otimes L^{m+\ell})
$$
is surjective, and $\sF$ is $(m+\ell)$-regular with respect to $L$ for every $\ell \geq 0$.

\begin{lemma}[{cf. \cite[Corollary 3.2]{Arapura}}]\label{lem:reg=>res}
If $\sO_X$ is $k$-regular with $k \geq 1$ and $\sF$ is $m$-regular with respect to $L$, then there are finite dimensional vector spaces $W_N, \ldots, W_1, W_0$ over $\mathbf{k}$ and a resolution of $\sF$ of the form
$$
W_N \otimes L^{-m-Nk} \longrightarrow \cdots \longrightarrow W_1 \otimes L^{-m-k} \longrightarrow W_0 \otimes L^{-m} \longrightarrow \sF \longrightarrow 0.
$$
\end{lemma}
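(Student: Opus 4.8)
The plan is to iterate the first step of a standard "mapping cone" resolution, using the regularity hypotheses to control each stage. First I would observe that since $\sF$ is $m$-regular with respect to $L$, Mumford's theorem gives that $\sF \otimes L^m$ is globally generated, so the evaluation map $H^0(X, \sF \otimes L^m) \otimes \sO_X \to \sF \otimes L^m$ is surjective; twisting by $L^{-m}$ and setting $W_0 := H^0(X, \sF \otimes L^m)$ yields a surjection $W_0 \otimes L^{-m} \twoheadrightarrow \sF$. Let $\sF_1$ be its kernel, so that $0 \to \sF_1 \to W_0 \otimes L^{-m} \to \sF \to 0$ is exact. The crux is to show $\sF_1$ is $(m+k)$-regular with respect to $L$; then the same construction applied to $\sF_1$ (with $m$ replaced by $m+k$) produces $W_1 := H^0(X, \sF_1 \otimes L^{m+k})$, a surjection $W_1 \otimes L^{-m-k} \twoheadrightarrow \sF_1$, and a new kernel $\sF_2$, and one continues inductively. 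Splicing these short exact sequences together gives the asserted complex; the only remaining point is termination, i.e. that the process stabilizes after finitely many steps ($W_N, \ldots, W_1, W_0$ finite-dimensional and the complex of finite length).

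For the regularity of the kernel, I would use the long exact sequence in cohomology associated to $0 \to \sF_1 \to W_0 \otimes L^{-m} \to \sF \to 0$, twisted by $L^{m+k-i}$ for each $i > 0$:
$$
H^{i-1}(X, \sF \otimes L^{m+k-i}) \longrightarrow H^i(X, \sF_1 \otimes L^{m+k-i}) \longrightarrow W_0 \otimes H^i(X, L^{k-i}).
$$
The right-hand term vanishes for $i \geq 1$ because $\sO_X$ is $k$-regular (so $H^i(X, \sO_X(k-i)) = 0$ for $i > 0$), and the left-hand term vanishes because $\sF$ is $m$-regular, hence $(m+k)$-regular by Mumford's theorem, so $H^{i-1}(X, \sF \otimes L^{(m+k)-1-(i-1)}) = H^{i-1}(X, \sF \otimes L^{m+k-i}) = 0$ for $i - 1 > 0$; the case $i = 1$ is immediate since $H^0$ of a surjection of globally generated sheaves (at the relevant twist) behaves well — more precisely, one checks $W_0 \otimes H^0(X, L^k) \to H^0(X, \sF \otimes L^{m+k})$ is surjective by Mumford, which forces $H^1(X, \sF_1 \otimes L^{m+k-1}) = 0$ from the same long exact sequence. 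This gives $H^i(X, \sF_1 \otimes L^{m+k-i}) = 0$ for all $i > 0$, i.e. $\sF_1$ is $(m+k)$-regular.

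The main obstacle is \emph{termination} of the iteration: a priori the syzygy sheaves $\sF_j$ need not become zero. Here I would invoke the syzygy/Hilbert-type finiteness: since each $\sF_j$ is $(m+jk)$-regular and $X$ is projective, one can pass to an affine cone or use that the graded module $\bigoplus_\ell H^0(X, \sF \otimes L^\ell)$ over the homogeneous coordinate ring $\bigoplus_\ell H^0(X, L^\ell)$ (or rather a polynomial ring surjecting onto it) is finitely generated, so by the Hilbert syzygy theorem its minimal free resolution has length at most $\dim X + 1$ plus a correction from the non-polynomiality of $S$; the construction above is precisely (a twisted, not-necessarily-minimal version of) such a resolution, and the bound $N$ can be taken to be, e.g., $\dim X + 1$ or read off from \cite[Corollary 3.2]{Arapura}. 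I expect the cleanest route is to cite the projective-dimension bound over the Cox/section ring directly rather than re-prove it; the regularity bookkeeping in the paragraph above is the genuinely new content of the proof, and everything else is standard homological algebra, so I would keep that part terse and refer to the cited references for the details.
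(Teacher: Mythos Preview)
Your approach is essentially the same as the paper's: set $W_0 = H^0(X,\sF\otimes L^m)$, take the kernel, show it is $(m+k)$-regular via the long exact sequence and Mumford's theorem, and iterate. Two small points. First, in the $i=1$ step you should twist by $L^{m+k-1}$, not $L^{m+k}$: the relevant surjectivity is $W_0\otimes H^0(X,L^{k-1})\to H^0(X,\sF\otimes L^{m+k-1})$, which holds by Mumford since $k-1\geq 0$; with your twist you would only conclude $H^1(\sF_1\otimes L^{m+k})=0$, one step off from what is needed. Second, the paragraph on termination is unnecessary: the paper neither proves nor uses finiteness of the resolution---its proof simply says ``continuing the arguments'' and in the applications (e.g.\ the proof of Lemma \ref{lem:technical2}) the resolution is written with a trailing ``$\cdots$'' and only a finite initial segment is ever used, via the spectral-sequence/chasing argument of \cite[Proposition B.1.2]{positivity}. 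So read the statement as producing such a complex of arbitrary length $N$, and drop the Hilbert-syzygy discussion.
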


\begin{proof}
By  \cite[Theorem 1.8.5]{positivity}, $\sF \otimes L^m$ is globally generated. Letting $W_0:=H^0(X, \sF \otimes L^m)$, we have a short exact sequence
$$
0 \longrightarrow M_0 \longrightarrow W_0 \otimes L^{-m} \longrightarrow \sF \longrightarrow 0.
$$
By  \cite[Theorem 1.8.5]{positivity}, the map
$$
W_0 \otimes H^0(X, L^{(m+k)-m-1}) \longrightarrow H^0(X, \sF \otimes L^{(m+k)-1})
$$
is surjective. Note that
$$
H^{i}(X, L^{(m+k)-m-i}) =0~~\text{ for $i \geq 1$}~~and~~H^{i-1}(X, \sF \otimes L^{(m+k)-i}) = 0~~\text{ for $i \geq 2$}.
$$
Thus $M_0$ is $(m+k)$-regular with respect to $L$. Replacing $\sF$ by $M_0$ and continuing the arguments, we obtain the lemma.
\end{proof}

\section{Lifting Syzygies from Hypersurfaces}\label{sec:lifting}
The aim of this section is showing how to lift syzygies from hypersurfaces (see Theorem \ref{thm:lifting}). This is the main ingredient of the proof of Theorem \ref{thm:main}. We start by setting notations.
Let $X$ be a smooth projective variety, $B$ be a line bundle on $X$, and $L$ be a very ample line bundle on $X$. Assume that $n:=\dim X \geq 2$. Take a very ample line bundle $H$ on $X$, and suppose that
\begin{equation}\label{eq:regBwrtL}
\begin{array}{l}
H^i(X, B \otimes L^m) = 0~~\text{ for $i>0$ and $m>0$ or $i<n$ and $m<0$};\\[3pt]
H^i(X, B \otimes H \otimes L^m) = H^i(X, B \otimes H^{-1} \otimes L^m) = 0~~\text{ for $1 \leq i \leq n-1$ and $m \in \mathbf{Z}$};\\[3pt]
H^0(X, B \otimes H \otimes L^{m}) = H^0(X, B \otimes H^{-1} \otimes L^m)=0~~\text{ for $m<0$};\\[3pt]
H^n(X, B \otimes H \otimes L^m) = H^n(X, B \otimes H^{-1} \otimes L^m)= 0~~\text{ for $m > 0$}.
\end{array}
\end{equation}
In particular, $R(X, B \otimes H; L)$ and $R(X, B \otimes H^{-1}; L)$ are Cohen--Macaulay. Choose a general member $\overline{X} \in |H|$, and put
$$
\begin{array}{l}
\overline{L}:=L|_{\overline{X}},~~\overline{B}:=B|_{\overline{X}},~~\overline{H}:=H|_{\overline{X}};\\[3pt]
V:=H^0(X, L),~~V':=H^0(X, L \otimes H^{-1}),~~\overline{V}:=H^0(X, \overline{L});\\[3pt]
r:=\dim V -1, v' := \dim V' ,~~\overline{r}:=\dim \overline{V} - 1
\end{array}
$$
so that $r=v' + \overline{r}$.
Fix a splitting $V=V' \oplus \overline{V}$. As in \cite[Lemma 3.12]{EL2}, we get
\begin{equation}\label{eq:splitting1}
\displaystyle \wedge^{p+1} M_{L}|_{\overline{X}} = \bigoplus_{j=0}^{p+1} \wedge^j V' \otimes \wedge^{p+1-j} M_{\overline{L}}~~\text{ for $p \geq 0$}.
\end{equation}
By Proposition \ref{prop:koszulcoh}, we obtain
\begin{equation}\label{eq:splitting2}
\begin{array}{l}
\displaystyle K_{p,q}(X, \overline{B}; L) = \bigoplus_{j=0}^p \wedge^j V' \otimes K_{p-j, q}(\overline{X}, \overline{B}; \overline{L})~~\text{ for $p,q \geq 0$};\\[3pt]
\displaystyle K_{p,q}(X, \overline{B} \otimes \overline{H}; L) =   \bigoplus_{j=0}^p \wedge^j V' \otimes K_{p-j, q}(\overline{X}, \overline{B} \otimes \overline{H}; \overline{L})~~\text{ for $p,q \geq 0$}.
\end{array}
\end{equation}

\medskip

Now, put $S:=\bigoplus_{m \geq 0} S^m V$ and $\overline{S}:=\bigoplus_{m \geq 0} S^m \overline{V}$. Consider the following short exact sequence
\begin{equation}\label{eq:ses1}
0 \longrightarrow B  \longrightarrow B \otimes H \longrightarrow \overline{B} \otimes \overline{H} \longrightarrow 0.
\end{equation}
By (\ref{eq:regBwrtL}), we have an exact sequence of finitely generated graded $S$-modules
$$
0 \longrightarrow R(X, B; L) \longrightarrow R(X, B \otimes H; L) \longrightarrow R(X, \overline{B} \otimes \overline{H}; L) \longrightarrow H^1(X, B) \longrightarrow 0.
$$
Let $\overline{R}(X, \overline{B} \otimes \overline{H}; L)$ be the kernel of the map $R(X, \overline{B} \otimes \overline{H}; L) \longrightarrow H^1(X, B)$, which is a finitely generated graded $\overline{S}$-module. Then we get a short exact sequence of finitely generated graded $S$-modules
$$
0 \longrightarrow R(X, B; L) \longrightarrow R(X, B \otimes H; L) \longrightarrow \overline{R}(X, \overline{B} \otimes \overline{H}; L) \longrightarrow 0.
$$
By \cite[Corollary (1.d.4)]{Green}, this induces a connecting map
\begin{equation}\label{eq:theta}
\theta_{p, q} \colon \underbrace{\overline{K}_{p+1, q-1}(X, \overline{B} \otimes \overline{H}; L)}_{:=K_{p+1, q-1}( \overline{R}(X, \overline{B} \otimes \overline{H}; L) , V)} \longrightarrow K_{p,q}(X, B; L).
\end{equation}
Notice that
\begin{equation}\label{eq:decombarK}
\overline{K}_{p+1, q-1}(X, \overline{B} \otimes \overline{H}; L) = \bigoplus_{j=0}^{p+1} \wedge^j V' \otimes \underbrace{\overline{K}_{p+1-j, q-1}(\overline{X}, \overline{B}; \overline{L})}_{\mathclap{:=K_{p+1-j, q-1}(\overline{R}(X, \overline{B} \otimes \overline{H}; L), \overline{V})}}.
\end{equation}
On the other hand, we also have a short exact sequence of finitely generated graded $S$-modules
$$
0 \longrightarrow \overline{R}(X, \overline{B} \otimes \overline{H}; L) \longrightarrow R(X, \overline{B} \otimes \overline{H}; L) \longrightarrow H^1(X, B) \longrightarrow 0.
$$
Since 
$$
K_{p,q}(H^1(X, B), V) = \begin{cases} \wedge^p V \otimes H^1(X, B) & \text{if $q = 0$}\\ 0 & \text{if $q \geq 1$},\end{cases}
$$
we get an exact sequence\\[-20pt]

\begin{small}
\begin{equation}\label{eq:psi1}
\begin{array}{l}
0 \longrightarrow \overline{K}_{p+1, 0}(X, \overline{B} \otimes \overline{H}; L) \xrightarrow{~\psi_0~} K_{p+1, 0}(X, \overline{B} \otimes \overline{H}; L)\\[3pt]
~\text{ }~\text{ }~\text{ }~\text{ }~\text{ }~\longrightarrow \wedge^{p+1}V \otimes H^1(X, B) \xrightarrow{~\varphi~} \overline{K}_{p, 1}(X, \overline{B} \otimes \overline{H}; L) \xrightarrow{~\psi_1~} K_{p, 1}(X, \overline{B} \otimes \overline{H}; L) \longrightarrow 0
\end{array}
\end{equation}
\end{small}

\noindent and an isomorphism
\begin{equation}\label{eq:psi2}
\psi_{q-1} \colon \overline{K}_{p+1, q-1}(X, \overline{B} \otimes \overline{H}; L) \longrightarrow K_{p+1, q-1}(X, \overline{B} \otimes \overline{H}; L)~~\text{ for $q \geq 3$}.
\end{equation}
In view of (\ref{eq:decombarK}), there is a map 
$$
\overline{\psi}_{p+1-j, q-1} \colon \overline{K}_{p+1-j, q-1}(\overline{X}, \overline{B} \otimes \overline{H}; \overline{L}) \longrightarrow K_{p+1-j, q-1}(\overline{X}, \overline{B} \otimes \overline{H}; \overline{L})
$$
such that 
$$
\psi_{q-1} = \bigoplus_{j=0}^{p+1} \id_{\wedge^j V'} \otimes \overline{\psi}_{p+1-j, q-1}.
$$

\medskip

Next, consider the following short exact sequence
\begin{equation}\label{eq:ses2}
0 \longrightarrow B \otimes H^{-1} \longrightarrow B \longrightarrow \overline{B} \longrightarrow 0.
\end{equation}
By (\ref{eq:regBwrtL}), we have a short exact sequence of finitely generated graded $S$-modules
$$
0 \longrightarrow R(X, B \otimes H^{-1}; L) \longrightarrow R(X, B; L) \longrightarrow R(X, \overline{B}; L) \longrightarrow 0.
$$
By \cite[Corollary (1.d.4)]{Green}, this induces a restriction map
\begin{equation}\label{eq:theta'}
\theta_{p,q}' \colon K_{p,q}(X, B; L) \longrightarrow K_{p,q}(X, \overline{B}; L).
\end{equation}

\begin{theorem}\label{thm:lifting}
Fix an index $q \geq 1$. Then we have the following:\\[3pt]
$(1)$ Suppose that the map $\theta_{p,q}$ in (\ref{eq:theta}) is a nonzero map for $p=c$ with $0 \leq c \leq r- \overline{r}-1$. Then the map $\theta_{p,q}$ is a nonzero map for $c \leq p \leq r - \overline{r}-1$, and consequently, 
$$
K_{p,q}(X, B; L) \neq 0~~\text{ for $c \leq p \leq r - \overline{r}-1$}.
$$
$(2)$ Suppose that the map $\theta_{p,q}'$ in (\ref{eq:theta'}) is a nonzero map for $p=r-c'$ with $0 \leq c' \leq \overline{r}$. Then $\theta_{p,q}'$ is a nonzero map for $\overline{r} \leq p \leq r-c'$, and consequently, 
$$
K_{p,q}(X, B; L) \neq 0~~\text{ for $\overline{r} \leq p \leq r - c'$}.
$$
\end{theorem}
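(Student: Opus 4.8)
The plan is to prove both parts by an induction on $p$: in Part $(1)$ we push the nonvanishing of $\theta_{p,q}$ \emph{upward} starting from $p=c$, and in Part $(2)$ we push the nonvanishing of $\theta_{p,q}'$ \emph{downward} starting from $p=r-c'$. The engine in both cases is the family of evaluation maps $\ev_x\colon K_{p+1,q}(M,V)\to K_{p,q}(M,V)$ ($x\in V^\vee$) recalled in Section~\ref{sec:prelim}, used together with the splittings $(\ref{eq:splitting2})$ and $(\ref{eq:decombarK})$. We will use two facts. First, since $\iota$ --- and hence each $\ev_x$ --- is induced by the chain-level map $\iota'\otimes\id$, which does not involve the module, $\ev_x$ is natural in $M$ and therefore commutes up to sign with all the maps in the long exact sequence of a short exact sequence of modules; in particular
$$
\ev_x\circ\theta_{p,q}=\pm\,\theta_{p-1,q}\circ\ev_x \qquad\text{and}\qquad \ev_x\circ\theta_{p,q}'=\pm\,\theta_{p-1,q}'\circ\ev_x.
$$
Second, if $x\in(V')^\vee$ is extended by $0$ on $\overline V$ via the fixed splitting $V=V'\oplus\overline V$, then, tracking $\ev_x$ through the splitting underlying $(\ref{eq:splitting1})$, one sees that $\ev_x$ respects $(\ref{eq:splitting2})$ and $(\ref{eq:decombarK})$ and acts on each K\"unneth summand as contraction by $x$ on the $\wedge^\bullet V'$-factor, tensored with the identity, so that it carries the $j$-th summand into the $(j-1)$-st one.

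For Part $(1)$: since $\theta_{c,q}\neq0$, decomposing the source according to $(\ref{eq:decombarK})$ and expanding the $\wedge^\bullet V'$-factors in decomposables produces an index $j_0$ with $0\leq j_0\leq c+1$, a decomposable $\omega_0=v_1\wedge\cdots\wedge v_{j_0}\in\wedge^{j_0}V'$, and a nonzero class $\alpha\in\overline K_{c+1-j_0,q-1}(\overline X,\overline B;\overline L)$ such that $\theta_{c,q}(\omega_0\otimes\alpha)\neq0$. The inductive step is this: if $\theta_{p,q}(\omega\otimes\alpha)\neq0$ for some decomposable $\omega=u_1\wedge\cdots\wedge u_j\in\wedge^jV'$ with $j=p-c+j_0\leq v'-1$ (so the $u_i$ are independent and $j<v'=\dim V'$), choose $w\in V'$ outside $\langle u_1,\dots,u_j\rangle$ and $x\in(V')^\vee$ with $x(w)=1$ and $x(u_i)=0$; then $\ev_x\big((w\wedge u_1\wedge\cdots\wedge u_j)\otimes\alpha\big)=(u_1\wedge\cdots\wedge u_j)\otimes\alpha$ by the second fact, and so by the first
$$
\ev_x\Big(\theta_{p+1,q}\big((w\wedge u_1\wedge\cdots\wedge u_j)\otimes\alpha\big)\Big)=\pm\,\theta_{p,q}(\omega\otimes\alpha)\neq0,
$$
which forces $\theta_{p+1,q}$ to be nonzero on a decomposable of the same shape at level $p+1$. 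Because $j_0\leq c+1$, the constraint $p-c+j_0\leq v'-1$ holds for all $p$ with $c\leq p\leq v'-2$, so iterating from $p=c$ gives $\theta_{p,q}\neq0$, hence $K_{p,q}(X,B;L)\neq0$, for $c\leq p\leq v'-1=r-\overline r-1$.

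For Part $(2)$: here the $\wedge^\bullet V'$-splitting $(\ref{eq:splitting2})$ lies on the \emph{target} $K_{p,q}(X,\overline B;L)$ of $\theta_{p,q}'$, so the propagation is downward and more direct. Suppose $\theta_{p,q}'(\xi_p)\neq0$ for some $\xi_p$, with $\overline r+1\leq p\leq r-c'$, and write $\theta_{p,q}'(\xi_p)=\sum_{j\geq0}\zeta_j$ as in $(\ref{eq:splitting2})$. By Proposition~\ref{prop:koszulcoh} the $j=0$ summand $K_{p,q}(\overline X,\overline B;\overline L)$ vanishes once $p\geq\overline r+1$, since the exterior powers of $M_{\overline L}$ appearing in its cohomological description then have index exceeding $\rank M_{\overline L}=\overline r$; hence the largest $j_1$ with $\zeta_{j_1}\neq0$ satisfies $j_1\geq1$. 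Writing $\zeta_{j_1}=\sum_k\omega_k\otimes\beta_k$ with the $\beta_k$ linearly independent and some $\omega_{k_0}\neq0$, pick $x\in(V')^\vee$ with $\iota_x\omega_{k_0}\neq0$; since $\ev_x$ sends each $\zeta_j$ into the $(j-1)$-st component and $\sum_k(\iota_x\omega_k)\otimes\beta_k\neq0$, we obtain $\ev_x(\theta_{p,q}'(\xi_p))\neq0$, so $\theta_{p-1,q}'(\ev_x(\xi_p))=\pm\,\ev_x(\theta_{p,q}'(\xi_p))\neq0$. Iterating downward from $p=r-c'$ to $p=\overline r$ yields $\theta_{p,q}'\neq0$, hence $K_{p,q}(X,B;L)\neq0$, for $\overline r\leq p\leq r-c'$.

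The routine points I am not detailing are the sign bookkeeping in the two compatibility identities, the verification that $\ev_x$ (for $x\in(V')^\vee$) respects the K\"unneth splittings, and the exact vanishing range of $K_{p,q}(\overline X,\overline B;\overline L)$ in the borderline cases $q=0,1$ used in Part $(2)$. The genuinely delicate point, and the one I expect to be the main obstacle to get exactly right, is the index arithmetic: one must see that the bound $j_0\leq c+1$ --- forced merely by the existence of the hypersurface syzygy $\alpha$ --- is precisely what lets the upward induction in Part $(1)$ reach $p=r-\overline r-1$, and that the downward induction in Part $(2)$ never passes through a level $p$ at which the $j=0$ part of the target $K_{p,q}(X,\overline B;L)$ fails to vanish.
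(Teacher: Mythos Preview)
Your proposal is correct and follows essentially the same approach as the paper: the paper uses the map $\iota\colon K_{p+1,q}\to V\otimes K_{p,q}$ in place of your individual $\ev_x$'s (the paper itself remarks that $\iota$ is the gluing of the $\ev_x$), and in Part~$(2)$ it tracks a fixed decomposable term $s_1'\wedge\cdots\wedge s_j'\otimes\beta'$ throughout rather than your top-$j$ component, but the mechanism---contracting out $V'$-factors via the splitting $(\ref{eq:decombarK})$/$(\ref{eq:splitting2})$ and using naturality of $\iota$ with respect to $\theta_{p,q}$ and $\theta_{p,q}'$---is identical. The index arithmetic and the vanishing $K_{p,q}(\overline{X},\overline{B};\overline{L})=0$ for $p\geq\overline{r}+1$ that you flag as the delicate points are handled exactly as you indicate.
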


\begin{proof}
$(1)$ There is $\alpha_{c+1} \in \overline{K}_{c+1, q-1}(X, \overline{B} \otimes \overline{H}; L)$ such that $\theta_{c,q}(\alpha_{c+1}) \neq 0$. We may assume that 
$$
\alpha_{c+1} = s_1' \wedge \cdots \wedge s_{j_0}' \otimes \alpha' \in \wedge^{j_0} V' \otimes \overline{K}_{c+1-j_0, q-1}(\overline{X}, \overline{B}; \overline{L}) \subseteq \overline{K}_{c+1, q-1}(X, \overline{B} \otimes \overline{H}; L)
$$
for some $0 \leq j_0 \leq c+1$. We proceed by induction on $p$. For $c \leq p-1 \leq r - \overline{r}-2$, we may assume that there is
$$
\alpha_p=s_1' \wedge \cdots \wedge s_j' \otimes \alpha' \in \wedge^j V' \otimes \overline{K}_{c+1-j_0, q-1}(\overline{X}, \overline{B}; \overline{L}) \subseteq \overline{K}_{p, q-1}(X, \overline{B} \otimes \overline{H}; L),
$$
where $j=p-(c+1-j_0)$, such that $\theta_{p-1, q}(\alpha_p) \neq 0$.
Consider the commutative diagram
$$
\xymatrixcolsep{0.69in}
\xymatrix{
\overline{K}_{p+1, q-1}(X, \overline{B} \otimes \overline{H}; L) \ar[r]^-{\theta_{p,q}} \ar[d]_-{\iota} & K_{p,q}(X, B; L) \ar[d]^-{\iota} \\
V \otimes \overline{K}_{p, q-1}(X, \overline{B} \otimes \overline{H}; L) \ar[r]_-{\id_V \otimes \theta_{p-1, q}} & V \otimes K_{p-1, q}(X, B; L).
}
$$
Take any $s_{j+1}' \in V'$ with $s_1' \wedge \cdots \wedge s_j' \wedge s_{j+1}' \neq 0$, and let
$$
\alpha_{p+1}:=s_1' \wedge \cdots \wedge s_{j+1}' \otimes \alpha' \in \wedge^{j+1} V' \otimes \overline{K}_{c+1-j_0, q-1}(\overline{X}, \overline{B}; \overline{L}) \subseteq \overline{K}_{p+1, q-1}(X, \overline{B} \otimes \overline{H}; L).
$$
Then
$$
\iota(\alpha_{p+1}) =\underbrace{\textstyle \sum_{i=1}^{j+1} (-1)^i s_i' \otimes s_1' \wedge \cdots \wedge \widehat{s_i'} \wedge \cdots \wedge s_{j+1}' \otimes \alpha'}_{\text{\tiny $\in V'  \otimes \overline{K}_{p,q-1}(X, \overline{B} \otimes \overline{H}; L)$}} + \underbrace{s_1' \wedge \cdots \wedge s_{j+1}' \otimes \iota(\alpha')}_{\text{\tiny $\in \overline{V} \otimes \overline{K}_{p,q-1}(X, \overline{B} \otimes \overline{H}; L)$}}.
$$
Notice that
$$
\id_V \otimes \theta_{p-1, q}\big( (-1)^{j+1} s_{j+1}' \otimes \alpha_p \big) = (-1)^{j+1} s_{j+1}' \otimes \theta_{p-1, q}(\alpha_p) \neq 0
$$
in $\langle s_{j+1}' \rangle \otimes K_{p-1, q}(X, B; L)$. Observe then that all other terms in $\iota(\alpha_{p+1})$ go into complements of  $\langle s_{j+1}' \rangle \otimes K_{p-1, q}(X, B; L)$ via the map $\id_V \otimes \theta_{p-1, q}$. Thus
$$
(\iota \circ \theta_{p,q}) (\alpha_{p+1})= ((\id_V \otimes \theta_{p-1, q}) \circ \iota)(\alpha_{p+1}) \neq 0,
$$
and hence, $\theta_{p,q}(\alpha_{p+1}) \neq 0$.

\medskip

\noindent $(2)$ There is $\beta_{r-c'} \in K_{r-c', q}(X, B; L)$ such that $\theta_{r-c',q}'(\beta_{r-c'})$ has a nonzero term
$$
s_1' \wedge \cdots \wedge s_{j_0}' \otimes \beta' \in \wedge^{j_0} V' \otimes K_{r-c'-j_0, q}(\overline{X}, \overline{B}; \overline{L}) \subseteq K_{r-c', q}(X, \overline{B}; L)
$$
for some $r-\overline{r}-c' \leq j_0 \leq r-\overline{r}$. We proceed by reverse induction on $p$. For $\overline{r}+1 \leq p+1 \leq r-c'$, we may assume that there is $\beta_{p+1} \in K_{p+1, q}(X, B; L)$ such that $\theta_{p+1, q}'(\beta_{p+1})$ has a nonzero term
$$
s_1' \wedge \cdots \wedge s_{j+1}' \otimes \beta' \in \wedge^{j+1} V' \otimes K_{r-c'-j_0, q}(\overline{X}, \overline{B}; \overline{L}) \subseteq K_{p+1, q}(X, \overline{B}; L),
$$
where $j+1 = p+1-(r-c'-j_0)$. Consider the commutative diagram
$$
\xymatrixcolsep{0.69in}
\xymatrix{
K_{p+1, q}(X, B; L) \ar[r]^-{\theta_{p+1, q}'} \ar[d]_-{\iota} & K_{p+1, q}(X, \overline{B}; L) \ar[d]^-{\iota} \\
V \otimes K_{p,q}(X, B; L) \ar[r]_-{\id_V \otimes \theta_{p,q}'} & V \otimes K_{p,q}(X, \overline{B}; L).
}
$$
We have
$$
\iota(s_1' \wedge \cdots \wedge s_{j+1}' \otimes \beta') =\underbrace{\textstyle \sum_{i=1}^{j+1} (-1)^i s_i' \otimes s_1' \wedge \cdots \wedge \widehat{s_i'} \wedge \cdots \wedge s_{j+1}' \otimes \beta'}_{\text{\tiny $\in V'  \otimes \wedge^j V' \otimes K_{r-c'-j_0,q}(\overline{X}, \overline{B}; \overline{L}) \subseteq V' \otimes K_{p,q}(X, \overline{B}; L)$}} + \underbrace{s_1' \wedge \cdots \wedge s_{j+1}' \otimes \iota(\beta')}_{\text{\tiny $\in \overline{V} \otimes K_{p,q}(X, \overline{B}; L)$}}.
$$
Note that all terms of $\theta_{p+1, q}'(\beta_{p+1})$ not in $\wedge^{j+1} V' \otimes \langle \beta' \rangle$ go into complements of $V' \otimes \wedge^j V' \otimes \langle \beta' \rangle$ via the map $\iota$. Observe then that the term
$$
(-1)^{j+1} s_{j+1}' \otimes s_1' \wedge \cdots \wedge s_j' \otimes \beta' \in V' \otimes \wedge^j V' \otimes K_{r-c'-j_0, q}(\overline{X}, \overline{B}; \overline{L}) \subseteq V \otimes K_{p,q}(X, \overline{B}; L)
$$
of $(\iota \circ \theta_{p+1, q}')(\beta_{p+1})$ cannot be cancelled in $V' \otimes \wedge^j V' \otimes \langle \beta' \rangle$. Thus there is $\beta_p \in K_{p,q}(X, B; L)$ such that $(\id_V \otimes \theta_{p,q}')((-1)^{j+1} s_{j+1}' \otimes \beta_p)$ has the above term, so $\theta_{p,q}'(\beta_p)$ has a nonzero term
$$
s_1' \wedge \cdots \wedge s_j' \otimes \beta' \in \wedge^j V' \otimes K_{r-c'-j_0, q}(\overline{X}, \overline{B}; \overline{L}) \subseteq K_{p, q}(X, \overline{B}; L).
$$
We complete the proof.
\end{proof}

\section{Precise Asymptotic Nonvanishing Theorem}\label{sec:proof}
After establishing key steps as propositions, we finish the proof of Theorem \ref{thm:main} at the end of this section.
We start by setting notations. Let $X$ be a smooth projective variety of dimension $n \geq 1$, and $B$ be a line bundle on $X$. For an integer $d \geq 1$, set
$$
L_d:=\sO_X(dA+P)~~\text {and }~~r_d:=h^0(X, L_d)-1,
$$
where $A$ is an ample divisor and $P$ is an arbitrary divisor on $X$. We assume throughout that $d$ is sufficiently large so that $L_d$ is a sufficiently positive very ample line bundle and $r_d = \Theta(d^n)$. Furthermore, we have
$$
H^i(X, B \otimes L_d^m) = 0~~\text{ for $i>0$ and $m>0$ or $i<n$ and $m<0$}.
$$
For $1 \leq q \leq n$, let $c_q(d)$ be the number such that
$$
K_{c_q(d), q}(X, B; L_d) \neq 0~~\text{ and }~~K_{p, q}(X, B; L_d) = 0~~\text{ for $0 \leq p \leq c_q(d)-1$},
$$
and $c_q'(d)$ be the number such that
$$
K_{r_d-c_q'(d), q}(X, B; L_d) \neq 0~~\text{ and }~~K_{p,q}(X, B; L_d) = 0~~\text{ for $r_d-c_q'(d)+1 \leq p \leq r_d$}.
$$
If $K_{p,q}(X, B; L_d)=0$ for all $p$, then we set $c_q(d) := r_d+1$ and $c_q'(d) := r_d+1$. We will see in Proposition \ref{prop:Theta} that this cannot happen. Recall from \cite[Theorem 1.1]{Park} that $c_q(x) \geq \Theta(d^{q-1})$.

\begin{lemma}\label{lem:CM}
Assume that $H^i(X, B)=0$ for $1 \leq i \leq n-1$, i.e., $R(X, B; L_d)$ is Cohen--Macaulay.
Fix an index $1 \leq q \leq n$. Then we have the following:\\[3pt]
$(1)$ If $K_{p_0, q}(X, B; L_d) = 0$ for some $p_0 \leq \Theta(d^{q-1})$, then $K_{p,q}(X, B; L_d) = 0$ for $p \leq p_0$.\\[3pt]
$(2)$ If $K_{p_0,q}(X, B; L_d) = 0$ for some $p_0 \geq r_d - \Theta(d^{n-q})$, then $K_{p,q}(X, B; L_d) = 0$ for $p \geq p_0$.
\end{lemma}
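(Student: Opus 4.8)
The plan is to prove part (2) directly, via the standard propagation of Koszul vanishing, and then to deduce part (1) from part (2) by Cohen--Macaulay duality; the only input beyond bookkeeping is the asymptotic vanishing theorem \cite[Theorem 1.1]{Park}.

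First I would set up the duality dictionary. Since $H^i(X,B)=0$ for $1\le i\le n-1$, the remark following Proposition \ref{prop:duality} applies, so both $R(X,B;L_d)$ and $R(X,\omega_X\otimes B^{-1};L_d)$ are Cohen--Macaulay and
\[
K_{p,q}(X,B;L_d)=K_{r_d-p-n,\,n+1-q}\bigl(X,\omega_X\otimes B^{-1};L_d\bigr)^{\vee}.
\]
Put $B^{\dagger}:=\omega_X\otimes B^{-1}$; by Serre duality $B^{\dagger}$ satisfies the same running hypotheses imposed on $B$ in this section, including $H^i(X,B^{\dagger})=0$ for $1\le i\le n-1$. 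The involution $(B,q,p)\mapsto(B^{\dagger},\,n+1-q,\,r_d-p-n)$ carries the hypothesis $p_0\le\Theta(d^{q-1})$ of (1) to the hypothesis $r_d-p_0-n\ge r_d-\Theta(d^{n-(n+1-q)})$ of (2) for the index $n+1-q$ (note $q-1=n-(n+1-q)$), and exchanges the two conclusions; hence (1) and (2) are equivalent, and it is enough to prove (2) for every admissible $X$, $B$, $q$.

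For (2), assume $K_{p_0,q}(X,B;L_d)=0$ with $p_0\ge r_d-\Theta(d^{n-q})$. By the elementary fact recalled earlier --- namely, vanishing of $K_{p_0,q'}(X,B;L_d)$ for all $0\le q'\le q$ forces vanishing of $K_{p,q'}(X,B;L_d)$ for all $p\ge p_0$ and all $0\le q'\le q$ --- it suffices to verify $K_{p_0,q'}(X,B;L_d)=0$ for each $0\le q'\le q$. The case $q'=q$ is the hypothesis. For $q'=0$, one has $K_{p,0}(X,B;L_d)\ne 0$ only for $0\le p\le h^0(X,B)-1$; since $p_0\ge r_d-\Theta(d^{n-q})$, $r_d=\Theta(d^n)$, and $q\ge1$, we have $p_0\to\infty$, so $K_{p_0,0}(X,B;L_d)=0$ for $d$ large. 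For $1\le q'\le q-1$ apply the duality once more: $K_{p_0,q'}(X,B;L_d)=K_{r_d-p_0-n,\,n+1-q'}(X,B^{\dagger};L_d)^{\vee}$, where $r_d-p_0-n\le\Theta(d^{n-q})$, whereas \cite[Theorem 1.1]{Park} applied to $B^{\dagger}$ gives $K_{p,n+1-q'}(X,B^{\dagger};L_d)=0$ for $0\le p\le\Theta(d^{n-q'})$ (and trivially for $p<0$). Since $q'\le q-1$ forces $n-q'>n-q$, for $d$ large $r_d-p_0-n\le\Theta(d^{n-q})$ lies in this vanishing range, so $K_{r_d-p_0-n,\,n+1-q'}(X,B^{\dagger};L_d)=0$, and hence $K_{p_0,q'}(X,B;L_d)=0$. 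This completes (2), and with it the lemma.

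The delicate point is exactly this last range comparison: the constant concealed in ``$p_0\ge r_d-\Theta(d^{n-q})$'' is unspecified, but after the duality shift the intermediate groups carry weight $n+1-q'$ with $q'\le q-1$, so Park's theorem yields vanishing out to order $d^{n-q'}$, a strictly higher power of $d$ than the threshold $d^{n-q}$, and this higher power swallows any fixed constant once $d$ is large. The remaining ingredients --- Serre duality, $\Theta$-notation bookkeeping, and the two quoted facts (propagation of Koszul vanishing and the description of $K_{p,0}$) --- are routine. No separate treatment of $n=1$ is needed: there $q=1$ is forced and the range $1\le q'\le q-1$ is empty, so (2) reduces to the cases $q'=0$ and $q'=q$.
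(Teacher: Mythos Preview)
Your proof is correct and follows essentially the same route as the paper: prove (2) by combining the propagation-of-vanishing fact with Cohen--Macaulay duality (Proposition~\ref{prop:duality}) and the asymptotic vanishing theorem \cite[Theorem 1.1]{Park} to kill $K_{p_0,q'}$ for $0\le q'\le q-1$, then deduce (1) from (2) by duality. The only cosmetic difference is that the paper treats $q'=0$ uniformly through duality (invoking \cite[Proposition 5.1]{EL2} for weight $n+1$), whereas you handle it directly via the description of $K_{p,0}$; both are fine.
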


\begin{proof}
By Proposition \ref{prop:duality}, \cite[Proposition 5.1]{EL2}, and \cite[Theorem 1.1]{Park}, if $0 \leq q' \leq q-1$, then
$$
K_{p,q'}(X, B; L_d) = K_{r_d-p-n, n+1-q'}(X, \omega_X \otimes B^{-1}; L_d)^{\vee}= 0~~\text{ for $p \geq r_d - \Theta(d^{n-q'})$}.
$$
Thus  $K_{p_0, q'}(X, B; L_d)=0$ for $0 \leq q' \leq q$, so the assertion $(2)$ follows. Now, by Proposition \ref{prop:duality}, the assertion $(2)$ implies the assertion $(1)$.
\end{proof}

\begin{remark}
If $R(X, B; L_d)$ is Cohen--Macaulay, then \cite[Theorem 4.1]{EL2}, \cite[Theorem 1.1]{Park}, and Lemma \ref{lem:CM} imply Theorem \ref{thm:main}. In general, we know from \cite[Theorem 4.1]{EL2} and \cite[Theorem 1.1]{Park} that $c_q(d) = \Theta(d^{q-1})$. Using Boij--S\"{o}derberg theory \cite{BS}, \cite{ES1}, one can show that vanishing and nonvanishing of $K_{p,q}(X, B; L_d)$ do not alternate for a while after $c_q(d)$. This means that
$$
K_{p,q}(X, B; L_d) \neq 0~~\text{ for $c_q(d) \leq p \leq r_d - \Theta(d^{n-1})$}.
$$
However, we will not use this remark in our proof of Theorem \ref{thm:main}.
\end{remark}

\begin{lemma}\label{lem:dimX=1}
Theorem \ref{thm:main} holds when $n=1$.
\end{lemma}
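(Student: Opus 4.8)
The plan is to exploit that, for $n=1$, the hypotheses ``$H^i(X,B)=0$ for $1\le i\le n-1$'' are vacuous, so $R(X,B;L_d)$ is automatically Cohen--Macaulay, and only the index $q=1$ is relevant. By Proposition~\ref{prop:duality} and the remark following it, one then has the unconditional isomorphism
\[
K_{p,1}(X,B;L_d)\;\cong\;K_{r_d-p-1,\,1}\bigl(X,\omega_X\otimes B^{-1};L_d\bigr)^{\vee}\qquad\text{for all }p,
\]
so the largest $p$ with $K_{p,1}(X,B;L_d)\neq0$ equals $r_d-1$ minus the smallest $p$ with $K_{p,1}(X,\omega_X\otimes B^{-1};L_d)\neq0$; thus $c_1'(d)$ for $B$ is one more than $c_1(d)$ for $\omega_X\otimes B^{-1}$, and it suffices to pin down the nonvanishing locus of the row $q=1$ for an arbitrary line bundle on $X$.

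First I would fix the shape of the resolution. The Hilbert polynomial of $R(X,B;L_d)$ has degree $1$, so $R(X,B;L_d)$ is a Cohen--Macaulay $S_d$-module of dimension $2$, whence $\pd_{S_d}R(X,B;L_d)=h^0(X,L_d)-2=r_d-1$ and, by minimality, $E_p\neq0$ for every $0\le p\le r_d-1$. For $n=1$ the only weights $q$ with $K_{p,q}(X,B;L_d)\neq0$ are $q=0,1,2$, and by \cite[Proposition 5.1 and Corollary 5.2]{EL2} (using $h^0(X,\omega_X\otimes B^{-1})=h^1(X,B)$) we have $K_{p,0}(X,B;L_d)\neq0\iff0\le p\le h^0(X,B)-1$ and $K_{p,2}(X,B;L_d)\neq0\iff r_d-h^1(X,B)\le p\le r_d-1$. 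Hence $K_{p,1}(X,B;L_d)\neq0$ throughout the band $h^0(X,B)\le p\le r_d-h^1(X,B)-1$; in particular $c_1(d)\le h^0(X,B)$ and $c_1'(d)\le h^1(X,B)+1$ are bounded independently of $d$.

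Next I would close up the two remaining bounded ranges $[\,c_1(d),\,h^0(X,B)-1\,]$ and $[\,r_d-h^1(X,B),\,r_d-c_1'(d)\,]$ with Lemma~\ref{lem:CM}, which applies since $R(X,B;L_d)$ is Cohen--Macaulay. Lemma~\ref{lem:CM}(1) propagates a vanishing of $K_{p,1}(X,B;L_d)$ downward from any $p$ below a certain threshold; tracing its proof (which passes through the ``high-$p$, $q=1$'' statement for $\omega_X\otimes B^{-1}$, whose only input is the row $q=0$) shows that threshold is of the form ``$r_d$ minus a bounded quantity,'' hence exceeds $h^0(X,B)$ once $d$ is large. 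So a zero of $K_{p,1}(X,B;L_d)$ inside $[\,c_1(d),\,h^0(X,B)-1\,]$ would force $K_{c_1(d),1}(X,B;L_d)=0$, contradicting the definition of $c_1(d)$; the symmetric use of Lemma~\ref{lem:CM}(2) handles the upper range. Together with the band above, this yields $K_{p,1}(X,B;L_d)\neq0\iff c_1(d)\le p\le r_d-c_1'(d)$.

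It remains to upgrade ``bounded'' to ``eventually constant.'' For each fixed $p$, $\dim_{\mathbf{k}}K_{p,1}(X,B;L_d)$ is, for $d\gg0$, a polynomial in $d$: by Proposition~\ref{prop:koszulcoh} it is a fixed alternating sum of dimensions of cohomology groups $H^j(X,\wedge^i M_{L_d}\otimes B\otimes L_d^{\,m})$ with $i,j,m$ bounded, each of which is given by Riemann--Roch on $X$ once the positively twisted higher cohomology vanishes. A polynomial is eventually zero or eventually positive, so for each fixed $p$ the group $K_{p,1}(X,B;L_d)$ is eventually zero or eventually nonzero; since $c_1(d)\le h^0(X,B)$ is bounded, $c_1(d)$ stabilizes, and likewise for $\omega_X\otimes B^{-1}$, giving $c_1(d)=\Theta(1)=\Theta(d^{q-1})$ and $c_1'(d)=\Theta(1)=\Theta(d^{n-q})$ (alternatively one may invoke \cite[Theorem 1.1]{Park} directly here). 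The part I expect to cost the most effort is lining up the a priori unspecified thresholds of Lemma~\ref{lem:CM} with the band $[\,h^0(X,B),\,r_d-h^1(X,B)-1\,]$ from the second step; the rest is bookkeeping with the well-understood rows $q=0,1,2$ of a curve.
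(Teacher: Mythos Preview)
Your proposal is correct and follows essentially the same route as the paper: the paper's three-line proof cites \cite[Proposition~5.1 and Corollary~5.2]{EL2} for the nonvanishing band $h^0(B)\le p\le r_d-h^0(\omega_X\otimes B^{-1})-1$, reads off $c_1(d)=\Theta(1)$ and $c_1'(d)=\Theta(1)$, and then invokes Lemma~\ref{lem:CM}. Your argument via $\pd R(X,B;L_d)=r_d-1$ and $E_p\neq 0$ recovers exactly this band, and your threshold analysis for Lemma~\ref{lem:CM} (tracing through duality to see that the ``$\Theta(1)$'' threshold is actually $r_d$ minus a bounded quantity) is precisely what makes the paper's one-word appeal to Lemma~\ref{lem:CM} work.

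The one substantive addition is your final paragraph, where you argue that $c_1(d)$ is \emph{eventually constant} (not merely bounded). The paper does not argue this; its proof stops at ``$c_1(d)\le h^0(B)$, hence $c_1(d)=\Theta(1)$''. Your polynomiality argument for this is not quite right as stated: the expression for $\dim K_{p,1}(X,B;L_d)$ coming from Proposition~\ref{prop:koszulcoh} involves $h^0(\wedge^{p+1}M_{L_d}\otimes B)$, an \emph{untwisted} term (no positive power of $L_d$), so Riemann--Roch alone does not give it---you would need to control $h^1(\wedge^{p+1}M_{L_d}\otimes B)$ separately (e.g.\ via semistability of $M_{L_d}$), or else rewrite this term as $\kappa_{p+1,0}$ and argue its polynomiality directly. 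Your suggested alternative of citing \cite[Theorem~1.1]{Park} does not help either, since that only gives a lower bound on $c_1(d)$. If the paper's $\Theta(1)$ is read as ``bounded'' (which is how the proof of Lemma~\ref{lem:dimX=1} actually uses it), this extra step is unnecessary and your proof is complete without it.
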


\begin{proof}
When $n=1$, \cite[Proposition 5.1 and Corollary 5.2]{EL2} imply
$$
K_{p,1}(X, B; L_d) \neq 0~~\text{ for $h^0(B) \leq p \leq r_d - h^0(X, \omega_X \otimes B^{-1})-1$}.
$$
This shows that $c_1(d) = \Theta(1)$ and $c_1'(d)= \Theta(1)$. Then Lemma \ref{lem:CM} implies the lemma.
\end{proof}

\medskip

Assume henceforth that $n=\dim X \geq 2$. Let $H$ be a very ample line bundle (independent of $d$) on $X$ such that
$$
H^i(X, B \otimes H) = H^i(X, B \otimes H^{-1}) = 0~~\text{ for $1 \leq i \leq n-1$}.
$$
As $d$ is sufficiently large, we have
\begin{equation}\label{eq:regBotimesHwrtL_d}
H^i(X, B \otimes H \otimes L_d^m) = H^i(X, B \otimes H^{-1} \otimes L_d^m) = 0~~\text{ for $1 \leq i \leq n-1$ and $m \in \mathbf{Z}$}.
\end{equation}
This means that $R(X, B \otimes H; L_d)$ and $R(X, B \otimes H^{-1}; L_d)$ are Cohen--Macaulay. Clearly,
\begin{equation}\label{eq:regBotimesHwrtL_d'}
\begin{array}{l}
H^0(X, B \otimes H \otimes L_d^m) = H^0(X, B \otimes H^{-1} \otimes L_d^m)=0~~\text{ for $m<0$};\\[3pt]
H^n(X, B \otimes H \otimes L_d^m) = H^n(X, B \otimes H^{-1} \otimes L_d^m)=0~~\text{ for $m>0$};
\end{array}
\end{equation}
Choose a general member $\overline{X} \in |H|$, and put
$$
\begin{array}{l}
\overline{L}_d:=L_d|_{\overline{X}},~~\overline{B}:=B|_{\overline{X}},~~\overline{H}:=H|_{\overline{X}};\\[3pt]
V_d:=H^0(X, L_d), ~~V_d':=H^0(X, L_d \otimes H^{-1}),~~\overline{V}_d:=H^0(X, \overline{L}_d);\\[3pt]
r_d := \dim V_d - 1,~~v_d' := \dim V_d' ,~~\overline{r}_d:=\dim \overline{V}_d - 1
\end{array}
$$
so that $r_d = v_d' + \overline{r}_d$ and $\overline{r}_d = \Theta(d^{n-1})$.
As in the previous section, fix a splitting $V_d=V_d' \oplus \overline{V}_d$. Then (\ref{eq:splitting1}) and (\ref{eq:splitting2}) hold. Furthermore, the short exact sequences (\ref{eq:ses1}) and (\ref{eq:ses2}) induce the map $\theta_{p,q}$ in (\ref{eq:theta}) and $\theta_{p,q}'$ in (\ref{eq:theta'}), respectively. In view of \cite[Corollary (1.d.4)]{Green}, they fit into the following exact sequences\\[-20pt]

\begin{footnotesize}
\begin{subequations}
\begin{align}
&K_{p+1, q-1}(X, B \otimes H; L_d) \longrightarrow \overline{K}_{p+1, q-1}(X, \overline{B} \otimes \overline{H}; L_d) \xrightarrow{\theta_{p,q}} K_{p,q}(X, B; L_d) \longrightarrow K_{p,q}(X, B \otimes H; L_d);\label{eq:les1}\\
& K_{p,q}(X, B \otimes H^{-1}; L_d) \longrightarrow K_{p,q}(X, B; L_d) \xrightarrow{\theta_{p,q}'} K_{p,q}(X, \overline{B}; L_d) \longrightarrow K_{p-1, q+1}(X, B \otimes H^{-1}; L_d).\label{eq:les2}
\end{align}
\end{subequations}
\end{footnotesize}

\begin{proposition}\label{prop:Theta}
For each $1 \leq q \leq n$, we have
$$
c_q(d) = \Theta(d^{q-1})~~\text{ and }~~c_q'(d) = \begin{cases} \Theta(d^{n-q}) & \text{if $H^{q-1}(X, B) =0$ or $q=1$} \\  q-1 & \text{if $H^{q-1}(X, B) \neq 0$ and $q \geq 2$}. \end{cases}
$$
\end{proposition}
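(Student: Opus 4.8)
The plan is to prove the proposition by induction on $n=\dim X$, the base case $n=1$ being Lemma~\ref{lem:dimX=1}. The induction step for $n\geq 2$ uses the asymptotic vanishing theorem \cite[Theorem~1.1]{Park}; Serre duality in the form of Proposition~\ref{prop:duality}, which relates $(X,B)$ to $(X,\omega_X\otimes B^{-1})$ and hence $c_q$ to $c'_{n+1-q}$ (up to the shift by $n$ and the intermediate-cohomology corrections); the general hyperplane section $\overline X\in|H|$ with the splitting \eqref{eq:splitting2}; and the lifting result Theorem~\ref{thm:lifting}.

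Several parts require no induction. The bound $c_q(d)\geq\Theta(d^{q-1})$ is \cite[Theorem~1.1]{Park}. When $H^{q-1}(X,B)=0$ or $q=1$, Proposition~\ref{prop:duality}(3)--(4) gives an inclusion $K_{p,q}(X,B;L_d)\hookrightarrow K_{r_d-p-n,\,n+1-q}(X,\omega_X\otimes B^{-1};L_d)^{\vee}$, and applying \cite[Theorem~1.1]{Park} to $\omega_X\otimes B^{-1}$ in weight $n+1-q$ (relevant exponent $n-q$) forces the target to vanish for $r_d-p-n\leq\Theta(d^{n-q})$; hence $c'_q(d)\geq\Theta(d^{n-q})$. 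When $H^{q-1}(X,B)\neq 0$ and $q\geq 2$, the equality $c'_q(d)=q-1$ is immediate from Proposition~\ref{prop:koszulcoh}(1): $K_{p,q}(X,B;L_d)=H^{q-1}(X,\wedge^{p+q-1}M_{L_d}\otimes B\otimes L_d)$ vanishes for $p\geq r_d-q+2$ since $\rank M_{L_d}=r_d$, while at $p=r_d-q+1$ one has $\wedge^{r_d}M_{L_d}=\det M_{L_d}=L_d^{-1}$, so $K_{r_d-q+1,q}(X,B;L_d)=H^{q-1}(X,B)\neq 0$. Finally, $c_1(d)\leq h^0(X,B)$ because $E_p\neq 0$ for all $0\leq p\leq\pd R(X,B;L_d)$ and $\pd R(X,B;L_d)\geq r_d-n$, whereas at $p=h^0(X,B)$ rows $0$, $\geq 2$, and $n+1$ all vanish for $d$ large (by \cite[Proposition~5.1 and Corollary~5.2]{EL2} and $c_q(d)\geq\Theta(d^{q-1})$), so $K_{h^0(X,B),1}(X,B;L_d)\neq 0$.

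The heart of the matter is the remaining upper estimates. It is enough to prove them for a pair $(X,B)$ with vanishing intermediate cohomology, so that the clean form of Proposition~\ref{prop:duality} holds and $R(X,B;L_d)$, $R(X,B\otimes H^{\pm1};L_d)$, $R(X,\omega_X\otimes B^{-1};L_d)$ are Cohen--Macaulay; the general case follows by passing to the Cohen--Macaulay twists $B\otimes H^{\pm1}$ and transporting nonvanishing back to $(X,B)$ along $\theta_{p,q}$ of \eqref{eq:theta} (small $p$) and $\theta'_{p,q}$ of \eqref{eq:theta'} (large $p$) through the exact sequences \eqref{eq:les1}--\eqref{eq:les2}, whose flanking terms are Koszul groups of Cohen--Macaulay modules and so are controlled by \cite[Theorem~1.1]{Park} and duality. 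For such a pair, $2\leq q\leq n-1$, and $\overline X$: by \eqref{eq:splitting2} and the inductive hypothesis on $\overline X$ (dimension $n-1$, weight $q$, with $H^{q-1}(\overline X,\overline B)\cong H^{q-1}(X,B)=0$ by the positivity of $H$), $K_{p,q}(X,\overline B;L_d)=\bigoplus_j\wedge^jV_d'\otimes K_{p-j,q}(\overline X,\overline B;\overline L_d)$ is nonzero on an interval with right endpoint $r_d-\varepsilon_d$, $\varepsilon_d=\Theta(d^{n-q-1})$ being the ``$c'_q$'' produced by the induction on $\overline X$. One then shows $\theta'_{p,q}$ is nonzero at $p=r_d-\varepsilon_d$ (whence $K_{p,q}(X,B;L_d)\neq 0$ there): since $R(X,B\otimes H^{-1};L_d)$ is Cohen--Macaulay, the flanking term $K_{p-1,q+1}(X,B\otimes H^{-1};L_d)$ of \eqref{eq:les2} is dual to a Koszul group of $\omega_X\otimes B^{-1}\otimes H$ in weight $n-q$, which vanishes once $r_d-p$ drops below the corresponding \cite{Park}-threshold (also of order $d^{n-q-1}$); once this is arranged, $\theta'_{p,q}$ surjects onto the nonzero group, and Theorem~\ref{thm:lifting}(2) propagates nonvanishing downward, giving $c'_q(d)\leq\varepsilon_d=\Theta(d^{n-q})$. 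With the lower bound this is $c'_q(d)=\Theta(d^{n-q})$, and dualizing gives $c_q(d)=\Theta(d^{q-1})$. The weights $q=1$ and $q=n$ are interchanged by Proposition~\ref{prop:duality}(2)/(4), and the one genuinely new input there — $c'_1(d)\leq\Theta(d^{n-1})$, equivalently $c_n(d)\leq\Theta(d^{n-1})$ for a pair with $H^{n-1}=0$ — comes from the same hyperplane-section mechanism applied to the linear strand.

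The real obstacle is the clause ``once this is arranged'': the whole scheme depends on reconciling the implied constants in all the ``$\Theta$''s — the \cite{Park}-vanishing thresholds on $X$, on $\overline X$, and for $\omega_X\otimes B^{-1}\otimes H^{\pm1}$, against the nonvanishing thresholds obtained inductively on $\overline X$ — so that $\theta_{p,q}$ and $\theta'_{p,q}$ really are nonzero at the asserted endpoints, i.e.\ so that the relevant asymptotic intervals overlap. To make it work one carries through the induction a quantitatively sharp version of the statement (for instance that $c'_q(d)$ equals the \cite{Park}-vanishing bound of the dual pair up to an additive constant depending only on $X,B,H$), the key comparison being that any fixed syzygy row begins no later for $\overline X$ than for $X$ twisted by $H$ — itself extracted from a restriction argument together with \cite[Theorem~1.1]{Park}, or if necessary from a direct comparison of the dimensions of the two Koszul groups bracketing $\theta'_{p,q}$ in \eqref{eq:les2}. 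Organizing this bookkeeping is where the work lies; the cohomological inputs are routine from Sections~\ref{sec:prelim}--\ref{sec:lifting}.
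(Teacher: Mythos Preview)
Your framework matches the paper's---induction on $n$, lower bounds via \cite[Theorem~1.1]{Park} and Proposition~\ref{prop:duality}, upper bounds via restriction to $\overline{X}$ through the sequences \eqref{eq:les1}--\eqref{eq:les2}. The parts you call ``require no induction'' are correct and essentially as in the paper.

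The gap is in the upper bound for $c_q'(d)$ when $H^{q-1}(X,B)=0$. You apply induction to $(\overline{X},\overline{B})$ in the \emph{same} weight $q$, obtaining $\varepsilon_d=\overline{c}_q'(\overline{X},\overline{B})=\Theta(d^{(n-1)-q})$, and then try to force $\theta_{p,q}'$ to be surjective at $p=r_d-\varepsilon_d$ by killing the flanking term $K_{p-1,q+1}(X,B\otimes H^{-1};L_d)$, whose dual lives in weight $n-q$ and whose \cite{Park}-threshold you correctly compute as $\Theta(d^{n-q-1})$. But this is not a bookkeeping problem that a ``quantitatively sharp version'' can fix: if both conditions were ever satisfied at some $p_0\geq r_d-\Theta(d^{n-q-1})$, you would conclude $c_q'(d)\leq r_d-p_0\leq\Theta(d^{n-q-1})$, contradicting the lower bound $c_q'(d)\geq\Theta(d^{n-q})$ you already established. (Your sentence ``$c_q'(d)\leq\varepsilon_d=\Theta(d^{n-q})$'' silently changes the exponent from $n-q-1$ to $n-q$.) So this route is closed, not merely unorganized.

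The paper's resolution is to use the \emph{other} map at large $p$. It applies induction to $(\overline{X},\overline{B}\otimes\overline{H})$ in weight $q-1$, obtaining $\overline{c}_{q-1}'(d)=\Theta(d^{(n-1)-(q-1)})=\Theta(d^{n-q})$---already the correct order---and then uses the connecting map $\theta_{p,q}$ of \eqref{eq:les1} at $p=r_d-\overline{c}_{q-1}'(d)-1$. The flanking term there is $K_{p+1,q-1}(X,B\otimes H;L_d)$, which dualizes to weight $n+2-q$ with \cite{Park}-threshold $\Theta(d^{n+1-q})$; since $\overline{c}_{q-1}'(d)=\Theta(d^{n-q})<\Theta(d^{n+1-q})$, injectivity of $\theta_{p,q}$ is automatic for large $d$ and no constants need comparing. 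Symmetrically, for the $c_q(d)$ upper bound ($1\leq q\leq n-1$) the paper uses $\theta'_{p,q}$ at $p=\overline{c}_q(d)=\Theta(d^{q-1})$, where the flanking term $K_{p-1,q+1}(X,B\otimes H^{-1};L_d)$ vanishes by \cite{Park} directly since $\Theta(d^{q-1})<\Theta(d^{q})$---again a strict gap in exponents. Neither Theorem~\ref{thm:lifting} nor your reduction to Cohen--Macaulay $B$ is needed for this proposition; the paper argues directly for arbitrary $B$.
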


\begin{proof}
We proceed by induction on $n$. As the assertion holds for $n=1$ by Lemma \ref{lem:dimX=1}, we assume that $n \geq 2$ and the assertions of the lemma hold for $\overline{X}$.

\medskip

First, we consider $c_q'(d)$. Suppose that $H^{q-1}(X, B) \neq 0$ and $q \geq 2$. Then 
$$
K_{r_d-q+1, q}(X, B; L_d) = H^{q-1}(X, \wedge^{r_d}M_{L_d} \otimes B \otimes L_d) = H^{q-1}(X, B) \neq 0.
$$
Since $K_{p, q}(X, B; L_d) = 0$ for $p \geq r_d-q$, it follows that $c_q'(d) = q-1$. 
Suppose that $H^{q-1}(X, B) = 0$ when $q \geq 2$ or $q=1$. By Proposition \ref{prop:duality} and \cite[Theorem 1.1]{Park}, 
$$
K_{p,q}(X, B; L) \subseteq K_{r_d-p-n, n+1-q}(X, \omega_X \otimes B^{-1}; L_d)^{\vee} = 0~~\text{ for $0 \leq r_d-p-n \leq \Theta(d^{n-q})$},
$$
so $c_q'(d) \geq \Theta(d^{n-q})$. For $2 \leq q \leq n$, let $\overline{c}_{q-1}'(d)$ be the number such that
$$
K_{\overline{r}_d-\overline{c}_{q-1}'(d), q-1}(\overline{X}, \overline{B} \otimes \overline{H}; \overline{L}_d) \neq 0~~\text{ and }~~K_{p,q-1}(\overline{X}, \overline{B} \otimes \overline{H}; \overline{L}_d) = 0~~\text{ for $p \geq \overline{r}_d-\overline{c}_{q-1}'(d)+1$}.
$$
By induction, $\overline{c}_{q-1}'(d) = \Theta(d^{n-q})$. Recall that $H^1(X, B) =0$ when $q=2$. By considering (\ref{eq:splitting2}), (\ref{eq:psi1}), (\ref{eq:psi2}), we get
$$
\overline{K}_{r_d - \overline{c}_{q-1}'(d), q-1}(X, \overline{B} \otimes \overline{H}; L_d) = K_{r_d - \overline{c}_{q-1}'(d), q-1}(X, \overline{B} \otimes \overline{H}; L_d)  \neq 0~~\text{ for $2 \leq q \leq n$}.
$$
Possibly replacing $H$ by more positive $H$ (still independent of $d$), we may assume that
\begin{equation}\label{eq:h^0(B+H)>h^0(B)}
h^0(X, B \otimes H) > h^0(X, B).
\end{equation}
Then $\overline{K}_{0,0}(\overline{X}, \overline{B} \otimes \overline{H}; \overline{L}_d) \neq 0$. Putting $\overline{c}'_0(d):=\overline{r}_d = \Theta(d^{n-1})$, we get from (\ref{eq:decombarK}) that
$$
\overline{K}_{r_d - \overline{c}_{0}'(d), 0}(X, \overline{B} \otimes \overline{H}; L_d)  \neq 0.
$$
For $1 \leq q \leq n$, thanks to (\ref{eq:regBotimesHwrtL_d}), Proposition \ref{prop:duality} and \cite[Theorem 1.1]{Park} yield that
$$
K_{r_d - \overline{c}_{q-1}'(d), q-1}(X, B \otimes H; L_d) = K_{ \overline{c}_{q-1}'(d) -n, n+2-q}(X, \omega_X \otimes B^{-1} \otimes H^{-1}; L_d)^{\vee} = 0, 
$$
since $\overline{c}_{q-1}'(d)-n = \Theta(d^{n-q}) < \Theta(d^{n+1-q})$. Then the map
$$
\theta_{r_d - \overline{c}_{q-1}'(d)-1,q} \colon \overline{K}_{r_d - \overline{c}_{q-1}'(d), q-1}(X, \overline{B} \otimes \overline{H}; L_d) \longrightarrow K_{r_d - \overline{c}_{q-1}'(d)-1,q}(X, B; L_d)
$$
in (\ref{eq:les1}) is a nonzero injective map. Thus $c_q'(d) \leq \overline{c}_{q-1}'(d)+1 = \Theta(d^{n-q})$, so $c_q'(d) = \Theta(d^{n-q})$.

\medskip

Next, we consider $c_q(d)$. We know from \cite[Theorem 1.1]{Park} that $c_q(d) \geq \Theta(d^{q-1})$. When $q=n$, Proposition \ref{prop:duality} says that there is a surjective map
$$
K_{p, n}(X, B; L_d) \longrightarrow K_{r_d-p-n, 1}(X, \omega_X \otimes B^{-1}; L_d)^{\vee}.
$$
As we have seen in the previous paragraph that $K_{r_d-p-n, 1}(X, \omega_X \otimes B^{-1}; L_d)^{\vee} \neq 0$ for some $p=\Theta(d^{n-1})$, we have  $c_n(d) \leq \Theta(d^{n-1})$. Hence $c_n(d) = \Theta(d^{n-1})$. Assume that $1 \leq q \leq n-1$. Let $\overline{c}_q(d)$ be the number such that
$$
K_{\overline{c}_q(d), q}(\overline{X}, \overline{B}; \overline{L}_d) \neq 0~~\text{ and }~~K_{p, q}(\overline{X}, \overline{B}; \overline{L}_d) = 0~~\text{ for $p \leq \overline{c}_q(d)-1$}.
$$
By induction, $\overline{c}_q(d) = \Theta(d^{q-1})$. Note that
$$
K_{\overline{c}_q(d),q}(X, \overline{B}; L_d) \neq 0
$$
thanks to (\ref{eq:splitting2}). By \cite[Theorem 1.1]{Park}, 
$$
K_{\overline{c}_q(d)-1, q+1}(X, B \otimes H^{-1}; L_d) = 0
$$
since $\overline{c}_q(d)-1 = \Theta(d^{q-1}) < \Theta(d^q)$. Then the map
$$
\theta_{\overline{c}_q(d), q}' \colon K_{\overline{c}_q(d),q}(X, B; L_d) \longrightarrow K_{\overline{c}_q(d),q}(X, \overline{B}; L_d)
$$
in (\ref{eq:les2}) is a nonzero surjective map. Thus $c_q(d) \leq \overline{c}_q(d) = \Theta(d^{q-1})$, so $c_q(d) = \Theta(d^{q-1})$. 
\end{proof}

Next, we prove the following technical lemma.

\begin{lemma}\label{lem:technical2}
Let $B'$ be a line bundle on $X$ (independent of $d$), and $L$ be a very ample line bundle on $X$ (independent of $d$) such that
\begin{align*}
&H^i(X, L^m) = 0~~\text{ for $i>0$ and $m>0$ or $i<n$ and $m<0$};\\
&H^i(X, B' \otimes L^m)=0~~\text{ for $i>0$ and $m>0$ or $i<n$ and $m<0$};\\
&H^i(X, M_{L_d} \otimes L^m) = 0~~\text{ for $i>0$ and $m>0$}.
\end{align*}
Put  $H:=L^{n+1}$. For $1 \leq p \leq \Theta(d^{q-1})$ and $1 \leq q \leq n+1$, we have the following:\\[3pt]
$(1)$ If $q \geq 2$ and $H^{q-1}(X, \wedge^p M_{L_d} \otimes B' \otimes L_d)=0$, then $H^{q-1}(X, \wedge^{p+1} M_{L_d} \otimes B' \otimes H \otimes L_d)=0$.\\[3pt]
$(2)$ If $q \leq n$ and $H^q(X, \wedge^p M_{L_d} \otimes B')=0$, then $H^q(X, \wedge^{p+1} M_{L_d} \otimes B' \otimes H)=0$.
\end{lemma}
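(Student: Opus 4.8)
The plan is to deduce both parts from the asymptotic vanishing theorem \cite[Theorem 1.1]{Park}, applied not to $B'$ but to the twisted bundle $B'\otimes H=B'\otimes L^{n+1}$. The three displayed hypotheses on $L$ and $B'$, together with $d\gg 0$, guarantee that $H^i(X,B'\otimes H\otimes L_d^m)=0$ for $i>0,m>0$ and, most importantly, that $H^i(X,B'\otimes H)=0$ for every $i\geq 1$; this last vanishing is where the positivity of $H$ enters the present proof, the exponent $n+1$ itself being dictated by the later application in Section \ref{sec:proof}. Consequently Proposition \ref{prop:koszulcoh} applies to $R(X,B'\otimes H;L_d)$ and identifies, for $q\geq 2$,
\[
H^{q-1}(X,\wedge^{p+1}M_{L_d}\otimes B'\otimes H\otimes L_d)=K_{p+2-q,\,q}(X,B'\otimes H;L_d),\qquad H^{q}(X,\wedge^{p+1}M_{L_d}\otimes B'\otimes H)=K_{p+1-q,\,q}(X,B'\otimes H;L_d),
\]
valid whenever the homological index on the right is $\geq 0$ (the second identity using $H^{q-1}(X,B'\otimes H)=H^{q}(X,B'\otimes H)=0$), and for $q=1$, $H^1(X,\wedge^{p+1}M_{L_d}\otimes B'\otimes H)=K_{p,1}(X,B'\otimes H;L_d)$. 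Since the homological indices $p+2-q$, $p+1-q$, $p$ are all $\leq p\leq\Theta(d^{q-1})$, one fixes the implicit constant in the lemma's ``$\Theta(d^{q-1})$'' to be no larger than the constant furnished by \cite[Theorem 1.1]{Park} for the line bundle $B'\otimes H$ and the pair $(A,P)$; for $1\leq q\leq n$ this settles part $(2)$ for all $q$ and part $(1)$ for all $q\leq n$, in the range where the identifications above apply.

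The remaining range is $p$ small, where the homological index becomes negative and the Koszul--cohomology reading is unavailable; there one has $p+1<q$, and the desired vanishing follows directly from the long Koszul resolution obtained by iterating (\ref{eq:wedgeM_L}),
\[
0\longrightarrow\wedge^{p+1}M_{L_d}\longrightarrow\wedge^{p+1}V_d\otimes\sO_X\longrightarrow\wedge^{p}V_d\otimes L_d\longrightarrow\cdots\longrightarrow V_d\otimes L_d^{p}\longrightarrow L_d^{p+1}\longrightarrow 0,
\]
tensored by $B'\otimes H$ (resp. by $B'\otimes H\otimes L_d$ for part $(1)$): the associated hypercohomology spectral sequence exhibits the target group as a subquotient of $\bigoplus_{i\geq 0}H^{q-i}(X,\wedge^{p+1-i}V_d\otimes B'\otimes H\otimes L_d^{i})$, and every summand vanishes --- for $i=0$ because $H^q(X,B'\otimes H)=0$, and for $i\geq 1$ because $q-i\geq 1$ while $B'\otimes H\otimes L_d^{i}$ is very positive for $d\gg 0$; the inequality $p+1<q$ is exactly what prevents a stray $H^0$-term from appearing. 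Alternatively this last step can be run as an induction on $p$ using Lemma \ref{lem:technical1}, which reduces $H^q(X,\wedge^{p+1}M_{L_d}\otimes B'\otimes H)=0$ to $H^q(X,M_{L_d}\otimes\wedge^pM_{L_d}\otimes B'\otimes H)=0$ together with $H^q(X,B'\otimes H)=0$, and then tensoring $0\to M_{L_d}\otimes\wedge^pM_{L_d}\to V_d\otimes\wedge^pM_{L_d}\to\wedge^pM_{L_d}\otimes L_d\to 0$ by $B'\otimes H$ reduces the first of these to the Park-vanishing $H^{q-1}(X,\wedge^pM_{L_d}\otimes B'\otimes H\otimes L_d)=0$ and to $H^q(X,\wedge^pM_{L_d}\otimes B'\otimes H)=0$, the inductive step (base case $H^q(X,B'\otimes H)=0$); in this route the hypothesis $H^q(X,\wedge^pM_{L_d}\otimes B')=0$ (resp. $H^{q-1}(X,\wedge^pM_{L_d}\otimes B'\otimes L_d)=0$ for part $(1)$) feeds the induction when one instead strips the factors of $H=L\cdot L^n$ one at a time.

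It remains to treat part $(1)$ at the endpoint $q=n+1$, which lies outside the scope of \cite[Theorem 1.1]{Park}: here the identification reads $H^n(X,\wedge^{p+1}M_{L_d}\otimes B'\otimes H\otimes L_d)=K_{p+1-n,\,n+1}(X,B'\otimes H;L_d)$ for $p\geq n-1$ (and for $p\leq n-2$ one again uses the Koszul resolution), and by Serre duality (Proposition \ref{prop:duality}$(1)$) together with \cite[Proposition 5.1 and Corollary 5.2]{EL2} this group is nonzero only for the homological index in the window $[\,r_d-n-h^0(X,\omega_X\otimes B'^{-1}\otimes H^{-1})+1,\ r_d-n\,]$, hence vanishes once the index is $\leq r_d-n-h^0(X,\omega_X\otimes B'^{-1}\otimes H^{-1})=\Theta(d^n)$, which holds for $p\leq\Theta(d^n)=\Theta(d^{q-1})$ after shrinking the constant. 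The main nuisance I anticipate is not any single step but the bookkeeping that knits the ``large $p$'' regime (Park, and \cite{EL2} for $q=n+1$) to the ``small $p$'' regime (Koszul resolution) with one uniform choice of implicit constant, keeping the bounded index shifts $p\mapsto p+1-q,\ p+2-q,\ p+1-n$ inside the vanishing windows; a secondary point requiring care is verifying that each auxiliary twist that occurs --- the bundles $B'\otimes H\otimes L_d^{i}$, and, if one prefers to resolve $H$ via Lemma \ref{lem:reg=>res} using that $\sO_X$ and $B'$ are $(n+1)$-regular with respect to $L$, also the negative twists $B'\otimes H^{-j}$ --- has the cohomology vanishing claimed, which is precisely the function of the three displayed hypotheses.
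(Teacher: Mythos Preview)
Your main route---identifying the target group with $K_{p+2-q,q}(X,B'\otimes H;L_d)$ (resp.\ $K_{p+1-q,q}$) and then invoking \cite[Theorem~1.1]{Park} for the line bundle $B'\otimes H$---never uses the hypothesis of either part, and that is the tell. What you actually establish is the \emph{conclusion alone}, for $p$ up to the Park constant for $B'\otimes H$; you then propose to set the implicit constant in ``$p\leq\Theta(d^{q-1})$'' to that value. But the whole point of the lemma, and the reason the hypothesis is there, is that the implication must hold for $p$ up to an \emph{arbitrary} multiple of $d^{q-1}$. This is exactly what is needed in Proposition~\ref{prop:theta}: there one applies the lemma at $p=c_q(d)+q-2$, where $c_q(d)$ is by definition the first index with $K_{c_q(d),q}(X,B;L_d)\neq 0$. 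There is no a priori relation between $c_q(d)$ and the Park threshold for $B\otimes H$; indeed the desired conclusion $K_{c_q(d),q}(X,B\otimes H;L_d)=0$ is precisely the inequality $c_q(d)<c_q^{B\otimes H}(d)$, which is what the lemma is meant to supply. With your choice of constant the lemma becomes tautological and useless for its application.

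The mechanism you miss is how the paper trades the extra factor of $M_{L_d}$ for \emph{negative} powers of $H$. Since $M_{L_d}$ is $(n{+}1)$-regular with respect to $L$, Lemma~\ref{lem:reg=>res} produces a resolution $\cdots\to W_1\otimes H^{-2}\to W_0\otimes H^{-1}\to M_{L_d}\to 0$; after Lemma~\ref{lem:technical1}, tensoring this against $\wedge^pM_{L_d}\otimes B'\otimes H\otimes L_d$ reduces part~(1) to the vanishing of $H^{q-1+i}(X,\wedge^pM_{L_d}\otimes B'\otimes H^{-i}\otimes L_d)=K_{p-q-i+1,\,q+i}(X,B'\otimes H^{-i};L_d)$ for all $i\geq 0$. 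For $i\geq 1$ this is Park vanishing in degree $q+i$, whose threshold $\Theta(d^{q+i-1})$ dominates \emph{any} prescribed multiple of $d^{q-1}$; the single remaining term $i=0$ stays in degree $q$ and is exactly the hypothesis. Part~(2) is run the same way, with an additional injectivity argument for a connecting map. Your ``alternative route'' gestures toward this but, as written, still relies on $H^{q-1}(X,\wedge^pM_{L_d}\otimes B'\otimes H\otimes L_d)=0$ via Park in the \emph{same} degree $q$, hence falls into the identical constant trap; the phrase ``strips the factors of $H=L\cdot L^n$ one at a time'' is not an argument.
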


\begin{proof}
Notice that $\sO_X$ and $M_{L_d}$ are $(n+1)$-regular with respect to $L$. By Lemma \ref{lem:reg=>res}, there are finitely dimensional vector spaces $\ldots, W_1, W_0$ over $\mathbf{k}$ and an exact sequence
\begin{equation}\label{eq:resM_{L_d}}
\cdots \longrightarrow  W_1 \otimes H^{-2} \longrightarrow  W_0 \otimes H^{-1} \longrightarrow M_{L_d} \longrightarrow 0.
\end{equation}

\noindent $(1)$ By Lemma \ref{lem:technical1}, it is sufficient to prove that
$$
H^{q-1}(X, M_{L_d} \otimes \wedge^p M_{L_d} \otimes B' \otimes H \otimes L_d) = 0.
$$
For this purpose, consider the exact sequence from (\ref{eq:resM_{L_d}}):
$$
\begin{array}{l}
\cdots \longrightarrow  W_1 \otimes \wedge^{p}M_{L_d} \otimes B' \otimes H^{-1} \otimes L_d \longrightarrow  W_0 \otimes \wedge^{p}M_{L_d} \otimes B' \otimes L_d\\[3pt]
~\text{ }~\text{ }~\text{ }~\text{ }~\text{ }~\text{ }~\text{ }~\text{ }~\text{ }~\text{ }~\text{ }~\text{ }~\text{ }~\text{ }~\text{ }~\text{ }~\text{ }~\text{ }~\text{ }~\text{ }~\text{ }~\text{ }~\text{ }~\text{ }~\text{ }~\text{ }~\text{ } \longrightarrow M_{L_d} \otimes \wedge^{p}M_{L_d} \otimes B' \otimes H \otimes L_d \longrightarrow 0.
\end{array}
$$
In view of \cite[Proposition B.1.2]{positivity}, it suffices to show that
$$
H^{q-1+i}(X, \wedge^{p}M_{L_d} \otimes B' \otimes H^{-i} \otimes L_d) =  K_{p-q-i+1, q+i}(X, B' \otimes H^{-i}; L_d).
=0~~\text{ for $i \geq 0$}.
$$
When $i=0$, this is the given condition. When $i \geq 1$, this follows from \cite[Theorem 1.1]{Park} since $p-q-i+1 \leq \Theta(d^{q-1}) < \Theta(d^{q+i-1})$.

\medskip

\noindent $(2)$ By Lemma \ref{lem:technical1}, it is sufficient to prove that
\begin{equation}\label{eq:claimtechnical}
H^q(X, M_{L_d} \otimes \wedge^{p} M_{L_d} \otimes B' \otimes H)=0.
\end{equation}
Let $M_0$ be the kernel of the map $W_0 \otimes H^{-1} \to M_{L_d}$ in (\ref{eq:resM_{L_d}}). We have a short exact sequence
\begin{equation}\label{eq:sesM_0}
0 \longrightarrow M_0 \otimes B' \otimes H \longrightarrow W_0 \otimes B' \longrightarrow M_{L_d} \otimes B' \otimes H \longrightarrow 0.
\end{equation}
As $H^q(X, \wedge^p M_{L_d} \otimes B')=0$, the claim (\ref{eq:claimtechnical}) is implied by the injectivity of the map
$$
\rho \colon H^{q+1}(X, \wedge^p M_{L_d} \otimes B' \otimes H)  \longrightarrow W_0 \otimes  H^{q+1}(X, \wedge^p M_{L_d} \otimes B').
$$
This map fits into the following commutative diagram
$$
\xymatrixcolsep{0.3in}
\xymatrix{
H^q (X, \wedge^{p} M_{L_d} \otimes M_0 \otimes B' \otimes H \otimes L_d) \ar[r] \ar[d] & W_0 \otimes H^{q}(X, \wedge^{p} M_{L_d} \otimes B' \otimes L_d) \ar[d] \\
H^{q+1}(X, \wedge^p M_{L_d} \otimes M_0 \otimes B' \otimes H) \ar[r]^-{\rho} \ar[d]_-{\psi} & W_0 \otimes  H^{q+1}(X,  \wedge^p M_{L_d} \otimes B') \ar[d] \\
\wedge^p V_d \otimes H^{q+1}(X, M_0 \otimes B' \otimes H) \ar[r]^-{\varphi} & W_0 \otimes \wedge^p V_d \otimes H^{q+1}(X, B').
}
$$
For the injectivity of the map $\rho$, it is enough to check that $\psi$ and $\varphi$ are injective. From (\ref{eq:resM_{L_d}}), we have an exact sequence
$$
\cdots \longrightarrow W_2 \otimes H^{-3} \longrightarrow W_1 \otimes H^{-2} \longrightarrow M_0 \longrightarrow 0.
$$
By \cite[Theorem 1.1]{Park},
$$
H^{q+i}(X,  \wedge^p M_{L_d} \otimes B' \otimes H^{-i-1} \otimes L_d) = K_{p-q-i, q+i+1}(X, B' \otimes H^{-i-1}; L_d) = 0~~\text{ for $i \geq 0$}
$$
 since $p-q-i \leq \Theta(d^{q-1}) < \Theta(d^{q+i})$. By \cite[Proposition B.1.2]{positivity}, 
$$
H^{q}(X, \wedge^{p} M_{L_d}  \otimes M_0 \otimes B' \otimes H \otimes L_d) = 0,
$$
so $\psi$ is injective. On the other hand, we get from (\ref{eq:sesM_0}) that
$$
H^{n+2-q}(X, M_0 \otimes B' \otimes H) = W_0 \otimes H^{n+2-q}(X, B'),
$$
so $\varphi$ is an isomorphism. 
\end{proof}

Now, take a very ample line bundle $L$ on $X$ (independent of $d$) such that
\begin{subequations}
\begin{align}
&H^i(X, L^m) = 0~~\text{ for $i>0$ and $m>0$ or $i<n$ and $m<0$};\label{eq:regwrtL1}\\
&H^i(X, B \otimes L^m)=0~~\text{ for $i>0$ and $m>0$ or $i<n$ and $m<0$};\label{eq:regwrtL2}\\
&H^i(X, M_{L_d} \otimes L^m) = 0~~\text{ for $i>0$ and $m>0$};\label{eq:regwrtL3}\\
&H^i(X, M_{L_d} \otimes \omega_X \otimes B^{-1} \otimes L^m) = 0~~\text{ for $i>0$ and $m>0$};\label{eq:regwrtL3.5}
\end{align}
\end{subequations}
By Proposition \ref{prop:Theta}, we can take an integer $c \geq c_1(d)+1, c_n'(d)-n+1$ independent of $d$. Successively applying Lemma \ref{lem:technical2} and possibly replacing $L$ by a higher power of $L$ (still independent of $d$), we may assume that
$$
\begin{array}{l}
K_{c-1, 1}(X, B \otimes L^{n+1}; L_d) = H^1(X, \wedge^c M_{L_d} \otimes B \otimes L^{n+1}) = 0;\\[3pt]
K_{c-1, 1}(X, \omega_X \otimes B^{-1} \otimes L^{n+1}; L_d) = H^1(X, \wedge^c M_{L_d} \otimes \omega_X \otimes B^{-1} \otimes L^{n+1}) = 0.
\end{array}
$$
By Lemma \ref{lem:CM}, we have\\[-23pt]

\begin{small}
\begin{subequations}
\begin{align}
&K_{c_1(d), 1}(X, B \otimes L^{n+1}; L_d) = H^1(X, \wedge^{c_1(d)+1} M_{L_d} \otimes B \otimes L^{n+1}) = 0;\label{eq:regwrtL4}\\
&K_{c_n'(d)-n, 1}(X, \omega_X \otimes B^{-1} \otimes L^{n+1}; L_d) = H^1(X, \wedge^{c_n'(d)-n+1} M_{L_d} \otimes \omega_X \otimes B^{-1} \otimes L^{n+1}) = 0.\label{eq:regwrtL5}
\end{align}
\end{subequations}
\end{small}

\noindent From now on, replace $H$ by $H:=L^{n+1}$. Then (\ref{eq:regBotimesHwrtL_d}) holds thanks to (\ref{eq:regwrtL2}), so $R(X, B \otimes H; L_d)$ and $R(X, B \otimes H^{-1}; L_d)$ are Cohen--Macaulay. Clearly, (\ref{eq:regBotimesHwrtL_d'}) is satisfied.

\begin{proposition}\label{prop:theta}
Assume that $n \geq 2$. For $1 \leq q \leq n$, we have the following:\\[3pt]
$(1)$ If $K_{p,q}(X, B; L_d) = 0$ for $p < c_q(d)$, then $K_{p+1, q}(X, B \otimes H; L_d) = 0$. Consequently, the map
$$
\theta_{c_q(d),q} \colon \overline{K}_{c_q(d)+1, q-1}(X, \overline{B} \otimes \overline{H}; L_d) \longrightarrow K_{c_q(d),q}(X, B; L_d)
$$
in (\ref{eq:les1}) is a nonzero surjective map.\\[3pt]
$(2)$ If $K_{p,q}(X, B; L_d) = 0$ for $p > r_d - c_q'(d)$, then $K_{p-1, q}(X, B \otimes H^{-1}; L_d) = 0$. Consequently, the map
$$
\theta_{r_d-c_q'(d), q}' \colon K_{r_d-c_q'(d), q}(X, B; L_d) \longrightarrow K_{r_d-c_q'(d), q}(X, \overline{B}; L_d)
$$
in (\ref{eq:les2}) is a nonzero injective map.
\end{proposition}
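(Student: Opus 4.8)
The plan is to observe first that the conditions imposed in $(1)$ and $(2)$ are precisely the defining properties of $c_q(d)$ and $c_q'(d)$, so that both assertions are in fact unconditional; I phrase them as implications only to isolate the cohomological content. The ``consequently'' clauses then drop out of the exact sequences $(\ref{eq:les1})$ and $(\ref{eq:les2})$: in $(\ref{eq:les1})$ with $p=c_q(d)$ the group $K_{c_q(d),q}(X,B\otimes H;L_d)$ lying to the right of $K_{c_q(d),q}(X,B;L_d)$ vanishes once part $(1)$ is known, so $\theta_{c_q(d),q}$ is surjective, and it is nonzero because $K_{c_q(d),q}(X,B;L_d)\neq 0$ by the definition of $c_q(d)$ (which is not $r_d+1$, by Proposition~\ref{prop:Theta}); dually, in $(\ref{eq:les2})$ with $p=r_d-c_q'(d)$ the group $K_{r_d-c_q'(d),q}(X,B\otimes H^{-1};L_d)$ lying to the left of $K_{r_d-c_q'(d),q}(X,B;L_d)$ vanishes once part $(2)$ is known, so $\theta'_{r_d-c_q'(d),q}$ is injective, and nonzero by the definition of $c_q'(d)$. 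Everything is thus reduced to two cohomology vanishings: $(1')$ $K_{p+1,q}(X,B\otimes H;L_d)=0$ for $0\le p<c_q(d)$, and $(2')$ $K_{p-1,q}(X,B\otimes H^{-1};L_d)=0$ for $p>r_d-c_q'(d)$.

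To prove $(1')$: for $q=1$ this is $(\ref{eq:regwrtL4})$ itself (recall $H=L^{n+1}$), together with Lemma~\ref{lem:CM}$(1)$ applied to the Cohen--Macaulay module $R(X,B\otimes H;L_d)$ to propagate the vanishing at $p=c_1(d)=\Theta(1)$ down to all smaller $p$. For $q\ge 2$ I induct on $p$: since $c_q(d)=\Theta(d^{q-1})\ge 1$ for $d$ large (Proposition~\ref{prop:Theta}) the base case $K_{0,q}(X,B;L_d)=0$ holds, and for the step I apply Lemma~\ref{lem:technical2}$(1)$ to $B'=B$ (whose hypotheses are $(\ref{eq:regwrtL1})$--$(\ref{eq:regwrtL3})$): via Proposition~\ref{prop:koszulcoh}$(1)$ it turns $K_{m,q}(X,B;L_d)=H^{q-1}(X,\wedge^{m+q-1}M_{L_d}\otimes B\otimes L_d)=0$ into $K_{m+1,q}(X,B\otimes H;L_d)=H^{q-1}(X,\wedge^{m+q}M_{L_d}\otimes B\otimes H\otimes L_d)=0$. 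Running $m$ from $0$ to $c_q(d)-1$, which stays in the admissible range of Lemma~\ref{lem:technical2} because $c_q(d)=\Theta(d^{q-1})$, gives $K_{p',q}(X,B\otimes H;L_d)=0$ for $1\le p'\le c_q(d)$, which contains $(1')$.

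To prove $(2')$ I split along the dichotomy of $(\ref{eq:main1})$. If $H^{q-1}(X,B)\neq 0$ and $q\ge 2$, then $c_q'(d)=q-1$, and $(2')$ reads $K_{p',q}(X,B\otimes H^{-1};L_d)=0$ for $p'\ge r_d-q+1$; by Proposition~\ref{prop:koszulcoh}$(1)$ (applied to $B\otimes H^{-1}$, admissible by $(\ref{eq:regBotimesHwrtL_d})$ and $(\ref{eq:regBotimesHwrtL_d'})$) this group vanishes for $p'\ge r_d-q+2$ for rank reasons, while $K_{r_d-q+1,q}(X,B\otimes H^{-1};L_d)=H^{q-1}(X,B\otimes H^{-1})=0$ by the choice of $H$ (as $1\le q-1\le n-1$), so $(2')$ holds. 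If instead $H^{q-1}(X,B)=0$ or $q=1$, then $c_q'(d)=\Theta(d^{n-q})$, and since $R(X,B\otimes H^{-1};L_d)$ and $R(X,\omega_X\otimes B^{-1}\otimes H;L_d)$ are Cohen--Macaulay, the Cohen--Macaulay case of Proposition~\ref{prop:duality} gives $K_{p',q}(X,B\otimes H^{-1};L_d)\cong K_{r_d-p'-n,\,n+1-q}(X,\omega_X\otimes B^{-1}\otimes H;L_d)^{\vee}$, so $(2')$ becomes $K_{m,\,n+1-q}(X,\omega_X\otimes B^{-1}\otimes H;L_d)=0$ for $m\le c_q'(d)-n$. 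When $q=n$ this is $K_{m,1}(X,\omega_X\otimes B^{-1}\otimes H;L_d)=0$ for $m\le c_n'(d)-n=\Theta(1)$, i.e.\ $(\ref{eq:regwrtL5})$ propagated by Lemma~\ref{lem:CM}$(1)$. When $q\le n-1$ (so $n+1-q\ge 2$, including the case $q=1$) I rerun the induction of the previous paragraph with $B'=\omega_X\otimes B^{-1}$ in place of $B$ (admissible by $(\ref{eq:regwrtL2})$ via Serre duality and by $(\ref{eq:regwrtL3.5})$) and with weight $n+1-q$: Lemma~\ref{lem:technical2}$(1)$ converts vanishing of $K_{m,\,n+1-q}(X,\omega_X\otimes B^{-1};L_d)$ into vanishing of $K_{m+1,\,n+1-q}(X,\omega_X\otimes B^{-1}\otimes H;L_d)$, and the required input $K_{m,\,n+1-q}(X,\omega_X\otimes B^{-1};L_d)=0$ for $m\le c_q'(d)-n-1$ is supplied by \cite[Theorem~1.1]{Park} applied to $\omega_X\otimes B^{-1}$, provided $c_q'(d)-n-1$ lies in the vanishing range for the weight-$(n+1-q)$ Koszul cohomology of $\omega_X\otimes B^{-1}$.

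The step I expect to be the main obstacle is exactly this last range verification in $(2')$. When $H^q(X,B)=0$ it is harmless: duality with $H^{q-1}(X,B)=H^q(X,B)=0$ identifies $c_q'(d)-n$ with the first nonvanishing degree of $K_{\bullet,\,n+1-q}(X,\omega_X\otimes B^{-1};L_d)$, so vanishing for $m\le c_q'(d)-n-1$ is automatic. When $H^q(X,B)\neq 0$, the duality exact sequence of Proposition~\ref{prop:duality}$(3)$ (and its analogues $(2)$, $(4)$) carries an extra term $\wedge^{\bullet}H^0(X,L_d)\otimes H^q(X,B)$, and one must show it does not disturb the vanishing in the range $m\le c_q'(d)-n-1$; this forces one to use the estimate $c_q'(d)=\Theta(d^{n-q})$ in the sharp form that comes out of the proof of Proposition~\ref{prop:Theta} (not merely quote it), together with \cite[Theorem~1.1]{Park} for weight $q+1$. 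The inductive use of Lemma~\ref{lem:technical2} and the reconciliation of the various $\Theta$-constants (absorbed, as in the proof of that lemma, by taking $d$ large) are routine by comparison.
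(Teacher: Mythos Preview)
Your argument for $(1')$ is correct and essentially the paper's, only organized differently: rather than applying Lemma~\ref{lem:technical2}$(1)$ at every $m\le c_q(d)-1$, the paper applies it once at $m=c_q(d)-1$ and then invokes Lemma~\ref{lem:CM}$(1)$ for the Cohen--Macaulay module $R(X,B\otimes H;L_d)$ to propagate down.

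There is, however, a genuine gap in your treatment of $(2')$ when $H^{q-1}(X,B)=0$ but $H^q(X,B)\neq 0$ (with $1\le q\le n-1$), and your closing paragraph does not fill it. Your route through Lemma~\ref{lem:technical2}$(1)$ needs $K_{m,\,n+1-q}(X,\omega_X\otimes B^{-1};L_d)=0$ for all $m\le c_q'(d)-n-1$. When $H^q(X,B)\neq 0$, Proposition~\ref{prop:duality} only yields an injection $K_{p,q}(X,B;L_d)\hookrightarrow K_{r_d-p-n,\,n+1-q}(X,\omega_X\otimes B^{-1};L_d)^\vee$, so the hypothesis of the proposition does \emph{not} force the right-hand side to vanish. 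Invoking \cite[Theorem~1.1]{Park} for $\omega_X\otimes B^{-1}$ gives vanishing only up to $C\cdot d^{n-q}$ for a constant $C$ determined by that theorem, whereas $c_q'(d)-n-1$ may exceed $C\cdot d^{n-q}$: the upper bound for $c_q'(d)$ coming out of Proposition~\ref{prop:Theta} (or Proposition~\ref{prop:boundforc_q,c_q'}) has the form $\overline{r}_q(d)+q$, with a leading constant that bears no a priori relation to $C$. Your appeal to ``\cite[Theorem~1.1]{Park} for weight $q+1$'' does not repair this.

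The paper sidesteps the issue by using part~$(2)$ of Lemma~\ref{lem:technical2} rather than part~$(1)$. The point is that for $q\ge 2$ one has, by Proposition~\ref{prop:koszulcoh}$(1)$,
\[
K_{r_d-c_q'(d)+1,\,q}(X,B;L_d)=H^{q-1}\bigl(X,\wedge^{\,r_d-c_q'(d)+q}M_{L_d}\otimes B\otimes L_d\bigr),
\]
and sheaf-level Serre duality (using $\wedge^k M_{L_d}^\vee\cong \wedge^{r_d-k}M_{L_d}\otimes L_d$) identifies this with $H^{n+1-q}\bigl(X,\wedge^{\,c_q'(d)-q}M_{L_d}\otimes \omega_X\otimes B^{-1}\bigr)^\vee$. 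Thus the \emph{given hypothesis itself} furnishes exactly the input $H^{n+1-q}(X,\wedge^{\,c_q'(d)-q}M_{L_d}\otimes \omega_X\otimes B^{-1})=0$ required by Lemma~\ref{lem:technical2}$(2)$, and one application of that lemma gives $(\ref{eq:K_{p+1,q}=0(2)})$; Lemma~\ref{lem:CM}$(2)$ then handles all larger $p$. No comparison of $\Theta$-constants is needed, and the troublesome term $\wedge^\bullet V\otimes H^q(X,B)$ in Proposition~\ref{prop:duality} never enters.
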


\begin{proof}
\noindent $(1)$ Recall that $R(X, B \otimes H; L_d)$ is Cohen--Macaulay. Then Lemma \ref{lem:CM} says that
$$
K_{c_q(d), q}(X, B \otimes H; L_d) = 0~~\Longrightarrow~~K_{p+1, q}(X, B \otimes H; L_d) = 0~~\text{ for $p \leq c_q(d)-1$}.
$$ 
Thus it suffices to show that $K_{c_q(d), q}(X, B \otimes H; L_d) = 0$. The case $q=1$ is nothing but (\ref{eq:regwrtL4}). Assume that $2 \leq q \leq n$. Note that the given condition is
$$
H^{q-1}(X, \wedge^{c_q(d)+q-2} M_{L_d} \otimes B \otimes L_d) = K_{c_q(d)-1, q}(X, B; L_d) = 0.
$$
Then Lemma  \ref{lem:technical2} $(1)$ yields
$$
K_{c_q(d), q}(X, B \otimes H; L_d) = H^{q-1}(X,  \wedge^{c_q(d)+q-1}M_{L_d} \otimes B \otimes H \otimes L_d) = 0.
$$

\medskip

\noindent $(2)$ Recall that $R(X, B \otimes H^{-1}; L_d)$ is Cohen--Macaulay. Then Lemma \ref{lem:CM} says that
$$
K_{r_d-c_q'(d), q}(X, B \otimes H^{-1}; L_d) =0~~\Longrightarrow~~K_{p-1, q}(X, B \otimes H^{-1}; L_d) = 0~~\text{ for $p \geq r_d - c_q'(d)+1$}.
$$
Thus it suffices to show that $K_{r_d-c_q'(d), q}(X, B \otimes H^{-1}; L_d) = 0$. 
By Proposition \ref{prop:duality},
$$
K_{r_d-c_q'(d), q}(X, B \otimes H^{-1}; L_d) = K_{c_q'(d)-n, n+1-q}(X, \omega_X \otimes B^{-1} \otimes H; L_d)^{\vee}.
$$
We need to show that
\begin{equation}\label{eq:K_{p+1,q}=0(2)}
H^{n+1-q}(X, \wedge^{c_q'(d)-q+1} M_{L_d} \otimes \omega_X \otimes B^{-1} \otimes H) = 0.
\end{equation}
When $q=n$, (\ref{eq:K_{p+1,q}=0(2)}) is the same to (\ref{eq:regwrtL5}). Assume that $1 \leq q \leq n-1$. 
If $q \geq 2$ and $H^{q-1}(X, B) \neq 0$, then $c_q'(d) = q-1$ so that (\ref{eq:K_{p+1,q}=0(2)}) holds by (\ref{eq:regwrtL2}). Assume $H^{q-1}(X, B) =0$ when $q \geq 2$. Then $c_q'(d) = \Theta(d^{n-q})$. The given condition and Serre duality yield 
$$
\begin{array}{rcl}
H^{n+1-q}(X, \wedge^{c_q'(d)-q} M_{L_d} \otimes \omega_X \otimes B^{-1}) &=& H^{n+1-q}(X, \wedge^{r_d-c_q'(d)+q} M_{L_d}^{\vee} \otimes \omega_X \otimes B^{-1} \otimes L_d^{-1}) \\[3pt]
&=& H^{q-1}(X, \wedge^{r_d-c_q'(d)+q} M_{L_d} \otimes B \otimes L_d)^{\vee} \\[3pt]
&=& K_{r_d-c_q'(d)+1, q}(X, B; L_d)^{\vee} ~=~0.
\end{array}
$$
Then the claim (\ref{eq:K_{p+1,q}=0(2)}) follows from Lemma \ref{lem:technical2} $(2)$.
\end{proof}

Theorem \ref{thm:main} now follows at once from the previous propositions and Theorem \ref{thm:lifting}.

\begin{proof}[Proof of Theorem \ref{thm:main}]
By Lemma \ref{lem:dimX=1}, we may assume that $n \geq 2$. Note that (\ref{eq:main1}) is proved in Proposition \ref{prop:Theta}. For $1 \leq q \leq n$, by Theorem \ref{thm:lifting} $(1)$ and Proposition \ref{prop:theta} $(1)$,
$$
K_{p,q}(X, B; L_d) \neq 0~~\text{ for $c_q(d) \leq p \leq r_d - \overline{r}_d -1$}.
$$
On the other hand, \cite[Proposition 5.1]{EL2} says $K_{p,0}(X, B; L_d) = 0$ for $p > \Theta(1)$. Thus we obtain (\ref{eq:main2}) for $q=1$.
For $2 \leq q \leq n$, by Theorem \ref{thm:lifting} $(2)$ and Proposition \ref{prop:theta} $(2)$,
$$
K_{p,q}(X, B; L_d) \neq 0~~\text{ for $\overline{r}_d \leq p \leq r_d - c_q'(d)$}.
$$
As $r_d - \overline{r}_d - 1 = \Theta(d^n) > \Theta(d^{n-1}) = \overline{r}_d$, we obtain  (\ref{eq:main2}) for $2 \leq q \leq n$.
\end{proof}

\section{Complements and Problems}\label{sec:openprob}
In this section, we show some additional results, and discuss some open problems. Recall that the asymptotic vanishing theorem (\cite[Theorem 1.1]{Park}) holds for singular varieties with coherent sheaves. Precisely, let $X$ be a projective variety of dimension $n$, and $B$ be a coherent sheaf on $X$. For an integer $d \geq 1$, let $L_d:=\sO_X(dA+P)$, where $A$ is an ample divisor and $P$ is an arbitrary divisor on $X$. For each $1 \leq q \leq n+1$,  if $d$ is sufficiently large, then
$$
K_{p,q}(X, B; L_d) =0~~\text{ for $0 \leq p \leq \Theta(d^{q-1})$}.
$$
We expect that Theorem \ref{thm:main} also holds in this setting.

\begin{conjecture}
Theorem \ref{thm:main} still holds when $X$ is a projective variety and $B$ is a coherent sheaf on $X$ with $\Supp B = X$.
\end{conjecture}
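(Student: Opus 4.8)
The plan is to rerun the proof of Theorem~\ref{thm:main} line by line, isolating the single place where smoothness of $X$ and the line-bundle hypothesis on $B$ are essential --- the Serre-duality input of Proposition~\ref{prop:duality} --- and replacing it by Grothendieck--Serre duality with the dualizing complex $\omega_X^{\bullet}$ of $X$ (which exists for any projective variety over a field, with no recourse to resolution of singularities). Everything else is insensitive to smoothness: Proposition~\ref{prop:koszulcoh}, Lemmas~\ref{lem:technical1} and \ref{lem:technical2}, the splittings \eqref{eq:splitting1}--\eqref{eq:splitting2}, and the lifting machinery of Theorem~\ref{thm:lifting} use only Serre vanishing for a coherent sheaf twisted by a sufficiently positive line bundle, the local freeness of $M_{L_d}$, and, in the hyperplane-section step, Bertini (for $H$ sufficiently positive the general member $\overline{X}\in|H|$ is again an integral projective variety of dimension $n-1$, and $\Supp\overline{B}=\overline{X}$ since $\Supp B=X$). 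Moreover \cite[Theorem~1.1]{Park} already holds for coherent sheaves on projective varieties, so the bound $c_q(d)\ge\Theta(d^{q-1})$ and the inductive estimates for $c_q(d)$ in Proposition~\ref{prop:Theta} go through verbatim.

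On the dual side, set $B^{\dagger}:=R\sHom_{\sO_X}(B,\omega_X^{\bullet})\in D^{b}_{\mathrm{coh}}(X)$ and $\mathcal{G}_i:=\mathcal{H}^{-i}(B^{\dagger})$; standard dimension bounds for the dualizing complex give $\dim\Supp\mathcal{G}_i\le i$ for $i<n$, while $\omega_B:=\mathcal{G}_n$ has full support because $\Supp B=X$ (on the smooth locus it is $B^{\vee}\otimes\omega_X$, of positive rank). When $X$ is Cohen--Macaulay and $B$ is locally free, $B^{\dagger}=B^{\vee}\otimes\omega_X[n]$ is a single sheaf and $\omega_B=B^{\vee}\otimes\omega_X$, so this case recovers the smooth picture verbatim and should be disposed of first. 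In general, combining Grothendieck duality with \eqref{eq:wedgeM_L} produces, for $L_d$ sufficiently positive, a spectral sequence with $E_2$-terms the Koszul cohomology groups $K_{\bullet,\bullet}(X,\mathcal{G}_i;L_d)$ (plus the same constant-sheaf correction terms $\wedge^{\bullet}V_d\otimes H^{q-1}(X,B)$, $\wedge^{\bullet}V_d\otimes H^{q}(X,B)$ as in Proposition~\ref{prop:duality}) converging to $K_{p,q}(X,B;L_d)^{\vee}$. Its $i=n$ layer is exactly $K_{r_d-p-n,\,n+1-q}(X,\omega_B;L_d)^{\vee}$, to which \cite[Theorem~1.1]{Park} applies; together with the boundary identity $K_{r_d-q+1,q}(X,B;L_d)=H^{q-1}(X,B)$ this yields the stated dichotomy for $c_q'(d)$. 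One then repeats the inductive chases of Lemma~\ref{lem:CM}, the rest of Proposition~\ref{prop:Theta}, and Proposition~\ref{prop:theta}, using the adjunction $\omega_{\overline{X}}^{\bullet}\simeq Li^{*}(\omega_X^{\bullet}\otimes H)[-1]$ for $i\colon\overline{X}\hookrightarrow X$ to relate $\omega_{\overline{B}}$ to the restriction of $\omega_B$, and finishes with Theorem~\ref{thm:lifting} as before.

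The hard part is controlling the lower cohomology sheaves $\mathcal{G}_i$, $i<n$, which are identically zero in the smooth case but not here. Their Koszul cohomology can be nonzero arbitrarily close to the top of the $p$-range --- if some $\mathcal{G}_i$ is a length-$\ell$ skyscraper then $K_{p,1}(X,\mathcal{G}_i;L_d)\ne 0$ for all $0\le p\le r_d$ --- so these ``extra'' $E_2$-contributions threaten to introduce new sign alternations of $K_{p,q}(X,B;L_d)$ near $p=r_d$ and hence to spoil the asserted value of $c_q'(d)$. One must show they do not survive in the relevant range: the expectation is that the spectral-sequence differentials pair each $\mathcal{G}_i$-layer against the $i=n$ layer so that whatever survives is confined to a window of width $O(d^{\,n-1})$ about $p=r_d$ --- already accounted for by the $i=n$ analysis --- but turning this into a proof, perhaps by an explicit study of the connecting maps or by feeding in the Boij--S\"oderberg ``no short alternation'' mechanism recalled in the remark following Lemma~\ref{lem:CM}, is where the genuine work lies. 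A subsidiary, more routine point is the base case $n=1$: for a possibly singular integral curve and a coherent sheaf $B$ of full support one needs the analogues of \cite[Proposition~5.1 and Corollary~5.2]{EL2} with $\omega_X\otimes B^{-1}$ replaced by $R\sHom(B,\omega_X)$ --- a coherent sheaf together with a finite-length $\sExt^1$-term --- which should follow from the same elementary counting but must be checked.
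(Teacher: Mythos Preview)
This statement is a \emph{conjecture} in the paper, not a theorem: the paper provides no proof of it. The only related content is the remark immediately following, which observes that smoothness of $X$ and the line-bundle hypothesis on $B$ enter the proof of Theorem~\ref{thm:main} solely through Serre duality (Proposition~\ref{prop:duality}), so that the argument goes through verbatim when $X$ is Cohen--Macaulay and $B$ is a vector bundle. Your proposal correctly isolates this same point and then sketches a strategy for the general case via Grothendieck duality with the full dualizing complex $\omega_X^{\bullet}$.

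But your write-up is not a proof, and you say so yourself. The entire argument for $c_q'(d)$ in Propositions~\ref{prop:Theta} and \ref{prop:theta} rests on the clean duality $K_{p,q}(X,B;L_d)\hookrightarrow K_{r_d-p-n,\,n+1-q}(X,\omega_X\otimes B^{-1};L_d)^{\vee}$ (up to the constant correction terms), and replacing $\omega_X\otimes B^{-1}$ by the complex $B^{\dagger}=R\sHom(B,\omega_X^{\bullet})$ introduces exactly the extra cohomology sheaves $\mathcal{G}_i$ for $i<n$ whose Koszul contributions you have no mechanism to kill. Your own example --- a skyscraper $\mathcal{G}_i$ giving $K_{p,1}\neq 0$ for all $p$ --- shows these terms can be nonzero throughout the range where you need vanishing, and the suggestion that spectral-sequence differentials or a Boij--S\"oderberg argument might dispose of them is a hope, not an argument. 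The base case $n=1$ is likewise left as something that ``should follow'' but ``must be checked.'' So what you have written is a reasonable outline of where the difficulty lies in an open problem, consistent with the paper's own assessment, but it does not resolve the conjecture.
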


Note that the expected nonvanishing of $K_{p,q}(X, B; L_d)$ for $q >\dim \Supp B + 1$ may not hold.

\begin{remark}
In the proof of Theorem \ref{thm:main}, we use the assumption that $X$ is smooth and $B$ is a line bundle only when we apply Serre duality. Thus Theorem \ref{thm:main} holds when $X$ is Cohen--Macaulay and $B$ is a vector bundle.
\end{remark}

From now on, we assume that $X$ is smooth and $B$ is a line bundle as in Theorem \ref{thm:main}. In the remaining, fix an index $1 \leq q \leq n$. It is very natural to study the asymptotic growth of $c_q(d)$ and $c_q'(d)$ as $d \to \infty$. In the spirit of \cite{Zhou}, we give an effective upper bound for each of $c_q(d)$ and $c_q'(d)$. For this purpose, we introduce some notations. Choose suitably positive very ample divisors $H_1, \ldots, H_{n-1}$ on $X$ such that
$$
\overline{X}_i:=H_1 \cap \cdots \cap H_i
$$
is a smooth projective variety for every $0 \leq i \leq n-1$. Note that $\overline{X}_0=X$. For each $0 \leq i \leq n-1$, put 
$$
\begin{array}{l}
\overline{H}_i:=\sO_X(H_{i+1})|_{\overline{X}_i}, ~\overline{B}_i:=B|_{\overline{X}_i},~\overline{B}_i':=B(H_1+\cdots+H_i)|_{\overline{X}_i};\\[3pt]
\overline{L}_d:=L_d|_{\overline{X}_i}, ~\overline{r}_i(d):=h^0(\overline{X}_i, \overline{L}_d)-1 = \Theta(d^{n-i}), ~\overline{r}_n(d):=0,
\end{array}
$$
and assume that (\ref{eq:regBotimesHwrtL_d}), (\ref{eq:regBotimesHwrtL_d'}),  (\ref{eq:h^0(B+H)>h^0(B)}) hold for 
$$
X=\overline{X}_i, ~B=\overline{B}_i, \overline{B}_i', \omega_{X_i} \otimes \overline{B}_i^{-1}, ~H=\overline{H}_i,~L_d=\overline{L}_d.
$$

\begin{proposition}\label{prop:boundforc_q,c_q'}
$c_q(d) \leq \overline{r}_{n+1-q}(d)-q+1$ and $c_q'(d) \leq \overline{r}_q(d)+q$.
\end{proposition}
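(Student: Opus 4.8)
The plan is to prove both inequalities by induction on $n$, peeling off one hyperplane at a time and lifting syzygies up the flag $X=\overline{X}_0\supset\overline{X}_1\supset\cdots$, exactly in the spirit of Proposition~\ref{prop:theta} and the proof of Theorem~\ref{thm:main}. The base case $n=1$ (where $q=1$) is Lemma~\ref{lem:dimX=1}; so assume $n\ge2$ and that the proposition holds on $\overline{X}_1$ for the bundles $\overline{B}_1$ and $\overline{B}_1'$, noting that on $\overline{X}_1$ the analogues of $\overline{r}_{n+1-q}(d)$ and of $\overline{r}_q(d)$ are $\overline{r}_{n+1-q}(d)$ and $\overline{r}_{1+q}(d)$. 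The recurring mechanism is: a connecting map in the long exact sequences (\ref{eq:les1}), (\ref{eq:les2}) is injective or surjective at a given homological degree as soon as a neighbouring Koszul group vanishes, and — using that $R(X,B\otimes H_1;L_d)$ and $R(X,B\otimes H_1^{-1};L_d)$ are Cohen--Macaulay, hence amenable to Serre duality (Proposition~\ref{prop:duality}) — that vanishing is precisely what \cite[Theorem 1.1]{Park} delivers whenever the relevant homological degree is of strictly smaller order than the corresponding threshold.

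For $c_q(d)\le\overline{r}_{n+1-q}(d)-q+1$, put $p_0:=\overline{r}_{n+1-q}(d)-q+1=\Theta(d^{q-1})$. By induction $K_{p_0,q}(\overline{X}_1,\overline{B}_1;\overline{L}_d)\neq0$, hence $K_{p_0,q}(X,\overline{B}_1;L_d)\neq0$ as the $j=0$ summand in (\ref{eq:splitting2}). The restriction $\theta'_{p_0,q}$ of (\ref{eq:theta'}) is surjective, because by (\ref{eq:les2}) its cokernel lies in $K_{p_0-1,q+1}(X,B\otimes H_1^{-1};L_d)$, which vanishes by \cite[Theorem 1.1]{Park} since $p_0-1=\Theta(d^{q-1})<\Theta(d^{q})$. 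Therefore $K_{p_0,q}(X,B;L_d)\neq0$, i.e. $c_q(d)\le p_0$. (Unwinding the induction, at the bottom of the flag one reaches a variety of dimension $q-1$, where the weight $q$ is extremal and \cite[Corollary 5.2]{EL2} pins down the extremal nonvanishing; a positive-enough choice of the $H_j$ makes its range nonempty, so $p_0$ is actually attained. The one-dimensional stratum and the $q=1$ endpoint are read off from Lemma~\ref{lem:dimX=1}.)

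For $c_q'(d)\le\overline{r}_q(d)+q$, put $p_1:=r_d-\overline{r}_q(d)-q$. By Proposition~\ref{prop:duality} and Cohen--Macaulayness, $K_{p_1+1,q-1}(X,B\otimes H_1;L_d)\cong K_{r_d-p_1-1-n,\,n+2-q}(X,\omega_X\otimes B^{-1}\otimes H_1^{-1};L_d)^{\vee}$, whose homological degree $r_d-p_1-1-n=\overline{r}_q(d)+q-1-n=\Theta(d^{n-q})$ is $<\Theta(d^{n-q+1})$, so it vanishes by \cite[Theorem 1.1]{Park}; hence by (\ref{eq:les1}) the (for $q\le2$ suitably modified) lifting map $\theta_{p_1,q}$ of (\ref{eq:theta}) is injective. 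By (\ref{eq:decombarK}) its source contains the summand $\wedge^{\dim V_d'}V_d'\otimes\overline{K}_{\,p_1+1-\dim V_d',\,q-1}(\overline{X}_1,\overline{B}_1;\overline{L}_d)$; since $\dim V_d'=r_d-\overline{r}_1(d)$, the homological degree here is $\overline{r}_1(d)-\overline{r}_q(d)-q+1=\overline{r}_1(d)-\bigl(\overline{r}_{1+q}(d)+(q-1)\bigr)$, which (via $\overline{K}_{\bullet,q-1}(\overline{X}_1,\overline{B}_1;\overline{L}_d)\cong K_{\bullet,q-1}(\overline{X}_1,\overline{B}_1';\overline{L}_d)$ from (\ref{eq:psi1})--(\ref{eq:psi2})) is exactly the value for which the induction hypothesis, applied on $\overline{X}_1$ to $\overline{B}_1'$ and the index $q-1$, forces nonvanishing. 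Thus that summand is nonzero, so $K_{p_1,q}(X,B;L_d)\neq0$, i.e. $c_q'(d)\le p_1$; Theorem~\ref{thm:lifting}~(2) then spreads the nonvanishing over $\overline{r}_1(d)\le p\le p_1$ (its hypothesis holds because $\theta'_{p_1,q}$ is injective, again by duality and \cite[Theorem 1.1]{Park}).

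The step I expect to be the main obstacle is the low-weight case of the $c_q'$-argument: the map $\theta_{p,q}$ of (\ref{eq:theta}) is the honest one only for $q\ge3$, and for $q=1,2$ it must be assembled from $\psi_0,\psi_1,\varphi$ as in (\ref{eq:psi1}), where an $H^1$-correction enters. The standard remedy, exactly as in the proof of Proposition~\ref{prop:Theta}, is to dispose of the case $H^{q-1}(X,B)\neq0$, $q\ge2$, separately — there $c_q'(d)=q-1\le\overline{r}_q(d)+q$ with nothing to prove — so that afterwards one may assume $H^{q-1}(X,B)=0$ and the correction disappears; a secondary nuisance is the one-dimensional stratum and the $q=1$ endpoint, which must be extracted from Lemma~\ref{lem:dimX=1} rather than from the duality/flag machinery, and this is why the constants are tracked through the twisted bundles $\overline{B}_i'$. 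Beyond that, the proof reduces to keeping the homological and cohomological indices straight along the flag and checking at each use of \cite[Theorem 1.1]{Park} that the degree is genuinely of smaller order than the threshold — which is where the gaps $q-1$ versus $q$ and $n-q$ versus $n-q+1$ do all the work.
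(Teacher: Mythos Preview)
Your approach is essentially the same as the paper's: induct on $n$ and lift syzygies via $\theta'$ (for the $c_q$ bound) and $\theta$ (for the $c_q'$ bound), citing the mechanism already isolated in the proof of Proposition~\ref{prop:Theta}. One bookkeeping slip: in the $c_q'$ step you apply the induction hypothesis on $\overline{X}_1$ at index $q-1$, so the relevant stratum is $\overline{r}^{\,\overline{X}_1}_{q-1}(d)=\overline{r}_q(d)$, not $\overline{r}_{1+q}(d)$; your displayed identity $\overline{r}_1(d)-\overline{r}_q(d)-q+1=\overline{r}_1(d)-(\overline{r}_{1+q}(d)+(q-1))$ is therefore false as written, but with $\overline{r}_q$ in place of $\overline{r}_{1+q}$ it becomes a tautology and the argument goes through. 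You should also make explicit that passing from ``$c_q(\overline{X}_1)\le p_0$'' to ``$K_{p_0,q}(\overline{X}_1,\overline{B}_1;\overline{L}_d)\neq 0$'' uses Theorem~\ref{thm:main}, which is already available at this point of the paper.

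The only genuine divergence from the paper is the case $q=n$ of the $c_q(d)$ bound. The paper does not run $\theta'$ down to $\overline{X}_1$ (where $q=n$ exceeds $\dim\overline{X}_1$); instead it applies the just--established $c_1'$ bound to $\omega_X\otimes B^{-1}$ on $X$ itself and invokes the surjection in Proposition~\ref{prop:duality}~(2). Your route---observing that on $\overline{X}_1$ the weight $q=n$ is the extremal case $q=\dim\overline{X}_1+1$ governed by \cite[Corollary 5.2]{EL2}, with $h^0(\omega_{\overline{X}_1}\otimes\overline{B}_1^{-1})>0$ guaranteed by the ``suitably positive'' choice of $H_1$---also works and keeps the argument uniform in $q$, at the cost of leaning on that positivity hypothesis rather than on duality.
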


\begin{proof}
We proceed by induction on $n=\dim X$.
When $n= 1$, the assertion is trivial. Assume that $n \geq 2$. For $c_q'(d)$, we may assume that $H^{q-1}(X, B) =0$ or $q=1$.
In the proof of Proposition \ref{prop:Theta}, we proved that
 $$
\theta_{r_d- \overline{r}_q(d)-q,q} \colon \overline{K}_{r_d - \overline{r}_q(d)-(q-1), q-1}(X, \overline{B} \otimes \overline{H}; L_d) \longrightarrow K_{r_d- \overline{r}_q(d)-q,q}(X, B; L_d)
$$
is nonzero, so $c_q'(d) \leq \overline{r}_q(d)+q$. We also have $K_{r_d-\overline{r}_1(d)-1, 1}(X, \omega_X \otimes B^{-1}; L_d) \neq 0$. 
By Proposition \ref{prop:duality}, $K_{r_1(d)-n+1, 1}(X, B; L_d) \neq 0$. Thus we obtain $c_n(d) \leq \overline{r}_1(d)-n+1$. For $1 \leq q \leq n-1$, in the proof of Proposition \ref{prop:Theta}, we proved that
$$
\theta_{\overline{r}_{n+1-q}(d)-q+1, q}' \colon K_{\overline{r}_{n+1-q}(d)-q+1,q}(X, B; L_d) \longrightarrow K_{\overline{r}_{n+1-q}(d)-q+1,q}(X, \overline{B}; L_d)
$$
is nonzero, so $c_q(d) \leq \overline{r}_{n+1-q}(d)-q+1$.
\end{proof}

\begin{remark}
In Proposition \ref{prop:boundforc_q,c_q'}, we do not assume that $R(X, B; L_d)$ is Cohen--Macaulay. However, when $R(X, B; L_d)$ is Cohen--Macaulay, by a more careful analysis, one can improve bounds for $c_q(d)$ and $c_q'(d)$ as in \cite{Zhou}. In particular, one can recover \cite[Theorem 6.1]{EL2}: If $X=\nP^n, B=\sO_{\nP^n}(b), L_d=\sO_{\nP^n}(d)$ and $b \geq 0, d \gg 0$, then
\begin{equation}\label{eq:veronsese}
c_q(d) \leq {d+q \choose q} - {d - b- 1 \choose q} - q~~\text{ and }~~c_q'(d) \leq {d+n-q \choose n-q}  - {n+b \choose q+b}  + q.
\end{equation}
We leave the details to interested readers.
\end{remark}

In characteristic zero, David Yang \cite[Theorem 1]{Yang} confirmed that $c_1(d)$ is a constant. This gives an answer to \cite[Problem 7.2]{EL2}. On the other hand, for Veronese syzygies, Ein--Lazarsfeld  \cite[Conjecture 2.3]{EL4} conjectured that equalities hold in (\ref{eq:veronsese}) whenever $d \geq b+q+1$. In particular, $c_q(d)$ and $c_q'(d)$ are polynomials. One may hope that the same is true in general.

\begin{question}[{cf. \cite[Remark 3.2]{EEL2}}]
$(1)$ Does the limit
$$
\lim_{d \to \infty} \frac{c_q(d)}{d^{q-1}}
$$ 
exist? If so, is the function $c_q(d)$ a polynomial of degree $q-1$ for sufficiently large $d$? What can one say about the leading coefficient $a_{q-1}:=\lim_{d \to \infty} c_q(d)/d^{q-1}$ of $c_q(d)$?\\[3pt]
$(2)$ Suppose that $H^{q-1}(X, B) = 0$ if $q \geq 2$. Does the limit
$$
\lim_{d \to \infty} \frac{c_q'(d)}{d^{n-q}}
$$
exist? If so, is the function $c_q'(d)$ a polynomial of degree $n-q$ for sufficiently large $d$? What can one say about the leading coefficient $a_{n-q}':=\lim_{d \to \infty} c_q'(d)/d^{n-q}$ of $c_q'(d)$?
\end{question}

When $\Char(\mathbf{k})=0$, a geometric meaning of the constant $c_1(d)$ was explored as follows. Ein--Lazarsfeld--Yang \cite[Theorem A]{ELY} proved that if $B$ is $p$-jet very ample, then $c_1(d) \geq p+1$. Agostini \cite[Theorem A]{Agostini} proved that if $c_1(d) \geq p+1$, then $B$ is $p$-very ample. These results are higher dimensional generalizations of the gonality conjecture on syzygies of algebraic curves, which was established by Ein--Lazarsfeld \cite{EL3} and Rathmann \cite{Rathmann}. 
On the other hand, Eisenbud--Green--Hulek--Popescu \cite{EGHP} related the nonvanishing of $K_{p,q}(X, L_d)$ to the existence of special secant planes. In particular, if the property $N_k$ holds for $L_d$,
then $L_d$ is $(k+1)$-very ample. It would be exceedingly interesting know whether the nonexistence of special secant planes implies the vanishing of certain $K_{p,q}(X, L_d)$. 

\medskip

When $X$ is a smooth projective curve and $\Char(\mathbf{k}) = 0$, Rathmann \cite[Theorem 1.2]{Rathmann} showed that if $H^1(X, L_d) =0$ and $H^1(X, B^{-1} \otimes L_d) = 0$, then $K_{p,1}(X, B; L_d) = 0$. It is natural to extend this effective result to higher dimensions. 
When $B = \sO_X$ and $L_d=\sO_X(K_X + dA)$, the following problem is closely related to Mukai's conjecture \cite[Conjecture 4.2]{EL1}. 

\begin{problem}
$(1)$ Suppose that $c_q(d)$ is a polynomial of degree $q-1$ for sufficiently large $d$. Find an effective bound for $d_0$ such that $c_q(d)$ becomes a polynomial for $d \geq d_0$.\\[3pt]
$(2)$ Suppose that $H^{q-1}(X, B) = 0$ if $q \geq 2$ and $c_q'(d)$ is a polynomial of degree $n-q$ for sufficiently large $d$. Find an effective bound for $d_0'$ such that $c_q'(d)$ becomes a polynomial for $d \geq d_0$.
\end{problem}

Now, we turn to the asymptotic behaviors of the \emph{Betti numbers}
$$
\kappa_{p,q}(X, B; L_d):=\dim K_{p,q}(X, B; L_d).
$$
Ein--Erman--Lazarsfeld conjectured that the Betti numbers $\kappa_{p,q}(X, B; L_d)$ are normally distributed \cite[Conjecture B]{EEL1}, and they verified the conjecture for curves \cite[Proposition A]{EEL1}. The normal distribution conjecture suggests the following unimodality conjecture. 

\begin{conjecture}
The Betti numbers $\kappa_{p,q}(X, B; L_d)$ form a unimodal sequence.
\end{conjecture}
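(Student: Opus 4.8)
We sketch a possible line of attack, in the natural reading that for each fixed $q$ with $1\le q\le n$ the sequence $p\mapsto\kappa_{p,q}(X,B;L_d)$ should be unimodal once $d\gg0$. To begin with, by Theorem \ref{thm:main} this sequence has no internal zeros — it is strictly positive exactly for $c_q(d)\le p\le r_d-c_q'(d)$ — so unimodality asks only that $\kappa_{p,q}$ be nondecreasing up to some peak and nonincreasing after it. Two reductions are immediate. The Ein--Erman--Lazarsfeld normal distribution conjecture \cite[Conjecture B]{EEL1}, which predicts that the $\kappa_{p,q}$ are asymptotically Gaussian, would imply the present conjecture, since a Gaussian is unimodal. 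And the case $n=1$ is known: the resolution then has only two nontrivial rows, and the Betti numbers are given by explicit binomial expressions (Eagon--Northcott for $X=\nP^1$, $B=\sO_{\nP^1}$) with monotone ratios of consecutive terms, while the asymptotic computations of \cite{EEL1} settle curves in general. So the content is $n\ge2$, and the goal is to prove unimodality without the full limit theorem.

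\medskip

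\emph{A Koszul-cohomology approach.} By Proposition \ref{prop:koszulcoh}, for $q\ge2$ one has $\kappa_{p,q}(X,B;L_d)=h^1(X,\wedge^{p+1}M_{L_d}\otimes B\otimes L_d^{q-1})$, and $\kappa_{p,q}(X,B;L_d)=h^q(X,\wedge^{p+q}M_{L_d}\otimes B)$ when $H^{q-1}(X,B)=H^q(X,B)=0$; so it suffices to prove that $k\mapsto h^1(X,\wedge^kM_{L_d}\otimes B\otimes L_d^{q-1})$ is unimodal on $c_q(d)+1\le k\le r_d-c_q'(d)+1$. The ranks $\rank\wedge^kM_{L_d}=\binom{r_d}{k}$ are unimodal, peaking near $k=r_d/2$, which lies in this interval since $c_q(d)$ and $c_q'(d)$ are of order at most $d^{n-1}$ while $r_d=\Theta(d^n)$, so the natural hope is that the cohomology dimension tracks the rank. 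Where the situation is favorable — when the natural map $\wedge^kH^0(X,L_d)\otimes H^0(X,B\otimes L_d^{q-1})\to H^0(X,\wedge^{k-1}M_{L_d}\otimes B\otimes L_d^q)$ appearing in Proposition \ref{prop:koszulcoh} is injective and the higher-cohomology contributions vanish — $\kappa_{p,q}$ becomes, up to sign, an Euler characteristic, hence by Riemann--Roch an explicit signed combination of binomial coefficients $\binom{r_d}{k},\binom{r_d}{k-1},\dots$ weighted by polynomials in $k$, and one is reduced to a purely combinatorial unimodality statement that looks approachable at least for $X=\nP^n$, $B=\sO_{\nP^n}$. The substantive point is to establish the required injectivity and higher-cohomology vanishing \emph{throughout} the interval; near the two ends they fail, which is exactly what creates $c_q(d)$ and $c_q'(d)$, and controlling them uniformly would call for quantitative cohomology estimates for $M_{L_d}$-twists in the spirit of Lemmas \ref{lem:technical1} and \ref{lem:technical2}.

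\medskip

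\emph{A Boij--S\"{o}derberg approach.} Write the Betti table of $R_d$ as a nonnegative rational combination $\sum_{\mathbf{d}}c_{\mathbf{d}}\,\pi_{\mathbf{d}}$ of pure diagrams. As in the argument of the Remark following Lemma \ref{lem:CM}, a pure diagram contributing to two entries $\kappa_{p,q}$ and $\kappa_{p',q}$ with $p<p'$ must be supported, in row $q$, exactly on the linear strand $p,\dots,p'$, and one checks from the Herzog--K\"{u}hl formula that its Betti numbers along this strand are unimodal in the column index; thus every pure summand restricts to a unimodal row. The obstruction is that different summands have their peaks at different positions, and a sum of unimodal sequences need not be unimodal, so one must show that the coefficients $c_{\mathbf{d}}$ are distributed so that the peaks essentially coincide — which once more appears to require knowledge of the limiting profile of the table rather than pure homological algebra.

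\medskip

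\emph{The main obstacle.} I expect the genuine difficulty to be precisely this absence of a \emph{local} mechanism: there is no single exact sequence or monotone comparison map that visibly forces unimodality, and the Betti numbers of a general finitely generated graded module need not be unimodal, so some positive geometric input is unavoidable. In effect a complete proof seems to require pinning down the asymptotic shape of $\kappa_{p,q}(X,B;L_d)$ finely enough to exclude a second bump — a qualitative form of the normal distribution conjecture — which makes the statement roughly as hard as \cite[Conjecture B]{EEL1} outside the ranges where the latter is known. A realistic first pass would target: $n=1$ (above); $X=\nP^n$ with $B=\sO_{\nP^n}$, where the explicit asymptotics of \cite{EEL1} are available and the combinatorial reduction above can be attempted; and the extremal columns near $p=c_q(d)$ and $p=r_d-c_q'(d)$, where Theorem \ref{thm:main} together with the Boij--S\"{o}derberg argument already gives monotonicity on short subintervals.
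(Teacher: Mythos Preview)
The statement you are addressing is a \emph{conjecture}: the paper does not prove it for $n\ge 2$, and your assessment that the general case remains open is correct. Two points, however, need correction.

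First, your claim that the Ein--Erman--Lazarsfeld normal distribution conjecture would imply unimodality is wrong, and the paper says so explicitly: since the cases of very small and very large $p$ are negligible in the asymptotic Gaussian statement, a secondary bump near the boundary is not excluded, and the unimodality conjecture is \emph{not} a consequence of \cite[Conjecture B]{EEL1}. This undercuts both your first reduction and your later remark that a proof would amount to ``a qualitative form of the normal distribution conjecture.''

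Second, your handling of the curve case is too quick and rests on the same error: you assert that \cite{EEL1} ``settles curves in general,'' but that paper establishes asymptotic normality, which by the above does not give unimodality. The present paper supplies an actual proof for curves (Proposition \ref{prop:logconcavityforcurves}), and it is in fact a successful execution of the Boij--S\"{o}derberg approach you sketch and then abandon. One writes the Betti table as $\sum a_{i,j}\pi_{i,j}$ with pure diagrams $\pi_{i,j}$, and checks directly from the Herzog--K\"{u}hl formulas that every $\pi_{i,j}$ has $\kappa_{p,1}(\pi_{i,j})$ nonincreasing once $p\ge (r-2+c)/2$; summing, the same holds for $\kappa_{p,1}(C,B;L)$. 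The short gap $(r-1)/2\le p\le (r-2+c)/2$ is then handled by the explicit Euler-characteristic formula $\kappa_{p,1}=\binom{r}{p}\bigl(b-pd/r-(r+1)c/(p+1)\bigr)$, valid on $c-1\le p\le r-c'$, via a direct ratio estimate; duality supplies the increasing half. So the obstacle you flag --- that peaks of different pure summands may not align --- is bypassed for curves not by aligning peaks but by showing all summands are simultaneously decreasing past a common threshold near the midpoint, with a brief Euler-characteristic computation to bridge to the exact midpoint.
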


As the cases of very small $p$ and very large $p$ for $\kappa_{p,q}(X, B; L_d)$ are negligible in the normal distribution conjecture, the unimodality conjecture is not a consequence of the normal distribution conjecture. 
Finally, we verify the unimodality conjecture for curves following the strategy of Erman \cite{Erman} based on  Boij--S\"{o}derberg theory. Eisenbud--Schreyer \cite{ES1} and Boij--S\"{o}derberg \cite{BS} showed that the Betti table of a graded module over a polynomial ring is a positive rational sum of pure diagrams (see \cite[Theorem 2.2]{ES2} for the precise statement). Let $C$ be a smooth projective curve, $B$ be a line bundle, and $L$ be a very ample line bundle of sufficiently large degree $d$. Put $c:=h^0(X, B)$ and $c':=h^1(X, B)$. By \cite[Proposition 5.1 and Corollary 5.2]{EL2}, $\kappa_{p,0}(C, B; L) = 0$ for $p \geq c$ and $\kappa_{p,2}(C, B; L)=0$ for $p \leq r-1-c'$, where $r:=h^0(X, L) -1$. By Riemann--Roch theorem, $r=d-g \approx d$ since $d$ is sufficiently large. Let $\pi$ be the Betti table of $R(C, B; L)$. Then \cite[Theorem 2.2]{ES2} says that
\begin{equation}\label{eq:bsdecomp}
\pi = \sum_{i=0}^{c} \sum_{j=0}^{c'} a_{i,j} \pi_{i,j}~~\text{ for some rational numbers $a_{i,j} \geq 0$ with}~~\sum_{i=0}^c \sum_{j=0}^{c'} a_{i,j} = d,
\end{equation}
where each $\pi_{i,j}$ is the pure diagram of the form:

\begin{center}
\texttt{ \begin{tabular}{l|ccccccccc}
         & $0$ & $\cdots$ &  $i-1$  & $i$   & $\cdots$ & $r-j-1$ & $r-j$ & $\cdots$ & $r-1$ \\ \hline
    $0$    & * & $\cdots$ & *   & -  & $\cdots$  & -  & -  & $\cdots$ & - \\
    $1$   & -  & $\cdots$ & -    & *  & $\cdots$  & *  & -  & $\cdots$  & -\\
    $2$   & -  &  $\cdots$ & - & - & $\cdots$ & - & * & $\cdots$ & * 
\end{tabular}}
\end{center}

\noindent Here ``\texttt{*}'' indicates a nonzero entry, and ``\texttt{-}'' indicates a zero entry. We have
$$
\begin{array}{l}
\displaystyle \kappa_{p,0}(\pi_{i,j}) = \frac{(r-1)!(i-p)(r-j+1-p)}{(r+1-p)!p!}~~\text{ for $0 \leq p \leq i-1$};\\[3pt]
\displaystyle \kappa_{p,1}(\pi_{i,j}) = \frac{(r-1)!(p+1-i)(r-j-p)}{(r-p)! (p+1)!}~~\text{ for $i \leq p \leq r-j-1$};\\[3pt]
\displaystyle \kappa_{p,2}(\pi_{i,j}) = \frac{(r-1)!(p+2-i)(p-r+j+1)}{(r-p-1)!(p+2)!}~~\text{ for $r-j \leq p \leq r-1$}.
\end{array}
$$

\begin{proposition}\label{prop:logconcavityforcurves}
$(1)$ The Betti table of $R(C, B; L)$ is asymptotically pure:
$$
\frac{a_{i,j}}{d} \rightarrow \begin{cases} 1 & \text{if $i=c$ and $j=c'$} \\ 0 & \text{otherwise} \end{cases}~~\text{ as $d \to \infty$}.
$$
$(2)$ $\kappa_{p,0}(C, B; L), \ldots, \kappa_{c-1, 0}(C, B; L)$ is  increasing, and $\kappa_{r-c', 2}(C, B; L), \ldots, \kappa_{r-1, 2}(C, B; L)$ is decreasing.\\[3pt]
$(3)$ The Betti numbers $\kappa_{p,1}(C, B; L)$ form a unimodal sequence.
\end{proposition}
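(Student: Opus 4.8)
The plan is to read everything off the Boij--S\"oderberg decomposition (\ref{eq:bsdecomp}), using the explicit pure‑diagram Betti numbers displayed above, the boundary vanishing $\kappa_{p,0}=0$ for $p\geq c$ and $\kappa_{p,2}=0$ for $p\leq r-c'-1$, and the fact that every $\pi_{i,j}$ occurring has $0\leq i\leq c$, $0\leq j\leq c'$. For part $(1)$ I would evaluate (\ref{eq:bsdecomp}) at the two corners of the Betti table. Since $\kappa_{0,0}(C,B;L)=h^0(B)=c$ and $\kappa_{r-1,2}(C,B;L)=h^1(B)=c'$ (the latter by Serre duality, cf. part $(1)$ of Proposition \ref{prop:duality}), and since $\kappa_{0,0}(\pi_{i,j})=\tfrac{i(r-j+1)}{r(r+1)}$, $\kappa_{r-1,2}(\pi_{i,j})=\tfrac{(r+1-i)j}{r(r+1)}$, the two equalities read, after clearing denominators and using $a_{i,j}\geq 0$, $\sum a_{i,j}=d$ and $r=d-g$,
\[
\sum_{i,j} i\,a_{i,j}=cr+O(1),\qquad \sum_{i,j} j\,a_{i,j}=c'r+O(1).
\]
As $\sum a_{i,j}=r+g$ and $i\leq c$, the first equality forces $\sum_{i<c}\sum_j a_{i,j}=O(1)$, and symmetrically $\sum_{j<c'}\sum_i a_{i,j}=O(1)$; together these give $a_{c,c'}=d-O(1)$ and $a_{i,j}=O(1)$ for $(i,j)\neq(c,c')$, which is $(1)$.

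For part $(2)$, Serre duality gives $\kappa_{p,2}(C,B;L)=\kappa_{r-1-p,0}(C,\omega_C\otimes B^{-1};L)$, so the statement for $q=2$ follows from that for $q=0$ applied to $\omega_C\otimes B^{-1}$ (whose $h^0,h^1$ are $c',c$); I treat $q=0$. Writing $\kappa_{p,0}(C,B;L)=\sum_{i\geq p+1,j}a_{i,j}\kappa_{p,0}(\pi_{i,j})$ and examining $\kappa_{p+1,0}-\kappa_{p,0}$: for $0\leq p\leq c-2$ the terms with $i\geq p+2$ contribute positively, and the closed form gives $\kappa_{p+1,0}(\pi_{i,j})/\kappa_{p,0}(\pi_{i,j})=\Theta(r)$, while by $(1)$ the dominant weight $a_{c,c'}=\Theta(d)$ occurs among them (as $c\geq p+2$); the only negative contribution comes from $i=p+1$, where $\kappa_{p+1,0}(\pi_{p+1,j})=0$ but $a_{p+1,j}=O(1)$. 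Comparing the order $\Theta(rd)$ of the positive part to the order $O(1)$ of the negative part, relative to the common factor $\kappa_{p,0}(\pi_{c,c'})$, yields $\kappa_{p+1,0}>\kappa_{p,0}$ for $d$ large, which is the monotonicity asserted in $(2)$.

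Part $(3)$ is the heart of the matter, and I would combine two ingredients. First, each strand $p\mapsto\kappa_{p,1}(\pi_{i,j})$ is log‑concave in $p$ (the ratio $\kappa_{p+1,1}(\pi_{i,j})/\kappa_{p,1}(\pi_{i,j})$ is a product of factors each decreasing in $p$), with mode $m_{i,j}=\tfrac r2+O(1)$; hence $\kappa_{\cdot,1}(C,B;L)=\sum a_{i,j}\kappa_{\cdot,1}(\pi_{i,j})$ is nondecreasing on $[0,m_-]$ and nonincreasing on $[m_+,r-1]$, where $m_-=\min_{i,j}m_{i,j}$ and $m_+=\max_{i,j}m_{i,j}$ satisfy $c-1<m_-\leq m_+<r-c'$ for $d$ large. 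Second, on the ``bulk'' range $c-1\leq p\leq r-c'$ — where $\kappa_{p,0}=0$ and $\kappa_{p,2}=0$ — the Hilbert series of $R(C,B;L)$ is $\big(c+(d-c-c')t+c't^2\big)/(1-t)^{r+1}$ (from $h^0(B\otimes L^m)=md+c-c'$ for $m\geq 1$), and comparing coefficients yields
\[
\kappa_{p,1}(C,B;L)=\binom{r-1}{p}\Big(d-\frac{cr}{p+1}-\frac{c'r}{r-p}\Big),\qquad c-1\leq p\leq r-c'.
\]
Here $h(p):=d-\frac{cr}{p+1}-\frac{c'r}{r-p}$ is strictly concave and, being $\kappa_{p,1}/\binom{r-1}{p}\geq 0$, nonnegative, and $\binom{r-1}{p}$ is a log‑concave sequence, so their product is log‑concave — hence unimodal — on $[c-1,r-c']$, in particular on the window $[m_-,m_+]$. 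Gluing ``nondecreasing on $[0,m_-]$'', ``unimodal on $[m_-,m_+]$'', ``nonincreasing on $[m_+,r-1]$'' then gives unimodality of the whole sequence.

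The step I expect to be the main obstacle is the closed formula in $(3)$ together with its consequences: one must derive it cleanly from the Hilbert series (checking in particular that $h>0$ on the open bulk interval, which holds because otherwise $\kappa_{\cdot,1}$ would vanish identically there, contradicting Theorem \ref{thm:main}), verify the log‑concavity of the product sequence, and justify the mode estimate $m_{i,j}=\tfrac r2+O(1)$ carefully enough that $[m_-,m_+]$ lies strictly inside the bulk range, so that the three monotonicity/unimodality regimes overlap and glue without producing a spurious second extremum at the seams. The corner evaluations in $(1)$ and the ratio estimates in $(2)$ are then routine, as long as all the $O(\cdot)$ bounds are kept uniform in $d$.
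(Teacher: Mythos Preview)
Your approach to $(1)$ and $(2)$ matches the paper's: evaluate the Boij--S\"oderberg decomposition at the two corners to force asymptotic purity, then use purity to control the $q=0$ row and Proposition~\ref{prop:duality} for $q=2$. (Your bound $a_{i,j}=O(1)$ for $(i,j)\neq(c,c')$ is in fact slightly sharper than the paper's $a_{i,j}/d\to 0$.) One slip: the Hilbert series is $(c+(d-c-c')t+c't^2)/(1-t)^{2}$, not $/(1-t)^{r+1}$; multiplying the numerator by $(1-t)^{r-1}$ and reading off the coefficient of $t^{p+1}$ gives your closed formula, which is correct and agrees with the paper's $\binom{r}{p}\bigl(b-\tfrac{pd}{r}-\tfrac{(r+1)c}{p+1}\bigr)$ after simplification.

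For $(3)$ your route genuinely differs from the paper's. The paper proves the sequence is \emph{decreasing} on $[(r-1)/2,\,r-1]$ in two pieces --- a direct ratio check $\kappa_{p+1,1}(\pi_{i,j})/\kappa_{p,1}(\pi_{i,j})\leq 1$ for $p\geq(r-2+c)/2$, then a ratio check on the closed formula for $(r-1)/2\leq p\leq(r-2+c)/2$ --- and then invokes the duality $\kappa_{p,1}(C,B;L)=\kappa_{r-1-p,1}(C,\omega_C\otimes B^{-1};L)$ (for $n=1$ the map in Proposition~\ref{prop:duality} is both injective by case~(4) and surjective by case~(2), hence an isomorphism) to obtain increasing on the left half. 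Your argument instead extracts monotonicity of the \emph{sum} on both sides of the window $[m_-,m_+]$ from the log-concavity of each individual strand, and uses the closed formula (concave positive factor times binomial, hence log-concave) only inside the window; this avoids the duality step altogether and is arguably more symmetric, at the cost of leaving the thresholds as $r/2+O(1)$ rather than the explicit $(r-2+c)/2$. Both arguments are valid. One point to tidy: your claim of log-concavity on all of $[c-1,r-c']$ can fail at an endpoint where $h=0$ (e.g.\ $B=\sO_C$, $p=0$ gives $h(0)=0$), so restrict the log-concavity assertion to where $h>0$ --- but since you only use it on $[m_-,m_+]\subset\{h>0\}$ this is cosmetic.
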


\begin{proof}
$(1)$ Let $\overline{a}_{i,j}:=a_{i,j}/d$. Then (\ref{eq:bsdecomp}) says
$$
\sum_{i=0}^{c} \sum_{j=0}^{c'} \overline{a}_{i,j} = 1.
$$ 
Notice that $\kappa_{0,0}(\pi_{i,j}) \to i/d$ and $\kappa_{r-1, 2}(\pi_{i,j}) \to j/d$ as $d \to \infty$.
Since $\kappa_{0,0}(C, B; L) = c$ and $\kappa_{r-1, 2}(C, B; L) = c'$, it follows from (\ref{eq:bsdecomp}) that 
$$
\sum_{i=0}^c \sum_{j=0}^{c'} i \overline{a}_{i,j} \to c~~\text{ and }~~\sum_{i=0}^c \sum_{j=0}^{c'} j \overline{a}_{i,j} \to c' ~~\text{ as $d \to \infty$}.
$$
Then we have
$$
\sum_{i=0}^c \sum_{j=0}^{c'} (c-i) \overline{a}_{i,j} \to 0~~\text{ and }~~\sum_{i=0}^c \sum_{j=0}^{c'} (c'-j) \overline{a}_{i,j} \to 0 ~~\text{ as $d \to \infty$}.
$$
Thus $\overline{a}_{i,j} \to 0$ as $d \to \infty$ unless $i=c$ and $j=c'$, and hence, $\overline{a}_{c, c'} \to 1$ as $d \to \infty$.

\medskip

\noindent $(2)$ As $d \gg 0$, we have $r \approx d$ and
$$
\kappa_{p,0}(\pi_{i,j}) \approx \frac{i-p}{p!} d^{p-1} ~~\text{ for $0 \leq p \leq i-1$}.
$$
Then $(1)$ implies that 
$$
\kappa_{p,0}(C, B; L) \approx \frac{c-p}{p!}d^p~~\text{ for $0 \leq p \leq c-1$}.
$$
Thus the first assertion holds, and the second assertion follows from Proposition \ref{prop:duality}.

\medskip

\noindent $(3)$ For $(r-2+c)/2 \leq p \leq r-j-1$,  we find
$$
\frac{\kappa_{p+1, 1}(\pi_{i,j})}{\kappa_{p, 1}(\pi_{i,j})} = \frac{(r-p) (p+2-i)(r-j-p-1)}{(p+2)(p+1-i)(r-j-p)} \leq 1
$$
since $r- p \leq p+2$ and 
$$
(p+2-i)(r-j-p-1)-(p+1-i)(r-j-p) = (r-j-p-1)-(p+1-i)  \leq 0.
$$
For $(r-2+c)/2 \leq p \leq r-2$, we get\\[-20pt]

\begin{small}
$$
\kappa_{p+1,1}(C, B; L)=\sum_{i=0}^{c} \sum_{j=0}^{\min \{c', r-p-2\}} a_{i,j}\kappa_{p+1, 1}(\pi_{i,j}) \leq \sum_{i=0}^c \sum_{j=0}^{\min\{c', r-p-1\} }a_{i,j}\kappa_{p,1}(\pi_{i,j})=\kappa_{p,1}(C, B; L),
$$
\end{small}

\noindent so the Betti numbers $\kappa_{p,q}(C, B; L)$ with $(r-2+c)/2 \leq p \leq r-1$ form a  decreasing sequence. Now, as in \cite[Proof of Proposition A]{EEL1}, we compute
\begin{equation}\label{eq:kappa_{p,1}(C, B; L)}
\kappa_{p,1}(C, B; L)  = \chi(C, \wedge^p M_L \otimes B \otimes L) - {r+1 \choose p+1}c
 = {r \choose p} \left( b-\frac{pd}{r} - \frac{(r+1)c}{p+1} \right)
\end{equation}
for $c-1 \leq p \leq r-c'$, where $b:=r + \deg B + 1$. For $(r-1)/2 \leq p \leq (r-2+c)/2$, we have
$$
\frac{\kappa_{p+1,1}(C, B; L)}{\kappa_{p,1}(C, B; L)} = \frac{(r-p)(b-(p+1)d/r - (r+1)c/(p+2))}{(p+1)(b-pd/r - (r+1)c/(p+1))} \leq 1
$$
since $r-p \leq p+1$ and 
$$
\left(b-\frac{(p+1)d}{r} - \frac{(r+1)c}{p+2} \right) - \left(b-\frac{pd}{r} - \frac{(r+1)c}{p+1} \right)= -\frac{d}{r} + \frac{(r+1)c}{(p+1)(p+2)} \approx -1 + \frac{dc}{d/2 \cdot d/2}< 0.
$$
Thus $\kappa_{p+1, 1}(C, B; L) \leq \kappa_{p,1}(C, B; L)$. We have shown that the Betti numbers $\kappa_{p,q}(C, B; L)$ with $(r-1)/2 \leq p \leq r-1$ form a  decreasing sequence.
By Proposition \ref{prop:duality}, the Betti numbers $\kappa_{p,q}(C, B; L)$ with $0 \leq p \leq (r-1)/2$ form an increasing sequence.
\end{proof}

\begin{remark}
When $B= \sO_C$, Proposition \ref{prop:logconcavityforcurves} $(1)$ is the main theorem of \cite{Erman}. In view of Proposition \ref{prop:logconcavityforcurves} $(2)$, one may expect that nonzero entries of the top row ($q=0$) of the Betti table form an increasing sequence and nonzero entries of the bottom row ($q=n+1$) of the Betti table form a decreasing sequence in higher dimensions.
\end{remark}

\begin{example}
Recall that a log-concave sequence of positive terms is unimodal. It is tempting to expect that $\kappa_{p,1}(C, B; L)$ form a log-concave sequence. Unfortunately, this may fail when $p$ is small. For instance, let $C$ be a general smooth projective complex curve of genus $3$, and $B:=\omega_C(x)$ for a point $x \in C$. Note that $B$ is not base point free, $\deg B = 5$, and $h^0(C, B) = 3$. If $L$ is a very ample line bundle on $C$ of degree $d \gg 0$, then \cite[Theorem C]{EL3} says that $\kappa_{1,1}(C, B; L)$ is a polynomial in $d$ of degree
$$
\gamma_1(B):=\dim \{ \xi \in C_2 \mid \underbrace{\text{$H^0(C, B) \to H^0(\xi, B|_{\xi})$ is not surjective}}_{\Longleftrightarrow ~h^1(C, B(-\xi))=h^0(C, \sO_C(\xi-x))=1~ \Longleftrightarrow ~ x \in \xi} \} = 1
$$
On the other hand, from (\ref{eq:kappa_{p,1}(C, B; L)}), we find
$$
\begin{array}{l}
\kappa_{2,1}(C, B; L) = {d-3 \choose 2} \left( d+3 - \frac{2d}{d-3} - \frac{(d-2)3}{3} \right) \approx \frac{3}{2} d^2;\\[3pt]
\kappa_{3,1}(C, B; L) = {d-3 \choose 3} \left(d+3 - \frac{3d}{d-3} - \frac{(d-2)3}{4} \right) \approx \frac{1}{24} d^4.
\end{array}
$$
Thus $\kappa_{2,1}(C, B; L)^2 < \kappa_{1,1}(C, B; L) \cdot \kappa_{3,1}(C, B; L)$.
\end{example}

\bibliographystyle{ams}

\end{document}